\newtheorem{theorem}{Theorem}
\newtheorem{lemma}{Lemma}
\newtheorem{example}{Example}
\newtheorem{definition}{Definition}
\newtheorem{proposition}{Proposition}
\newtheorem{conjecture}{Conjecture}
\newtheorem{corollary}{Corollary}
\newcommand{\sgc}{(N,v)}
\newcommand{\sgw}{(N,\mathcal{W})}
\begin{document}

\title{Bounds for the Nakamura number}

\author{Josep Freixas}
\author{Sascha Kurz}
\address{Josep Freixas\\Department of Mathematics and Engineering School of Manresa, Universitat Polit\`ecnica de Catalunya, 
Spain.\footnote{The author's research is partially supported by funds from the 
spanish Ministry of Economy and Competitiveness (MINECO) and from the European Union (FEDER funds) under grant MTM2015-66818-P 
(MINECO/FEDER).}}
\email{josep.freixas@upc.edu}
\address{Sascha Kurz\\Department for Mathematics, Physics and Informatics\\University Bayreuth\\Germany}
\email{sascha.kurz@uni-bayreuth.de}

\keywords{Nakamura number \and stability \and simple games \and complete simple games \and weighted games \and bounds} 
\subjclass[2000]{Primary: 91A12; Secondary: 91B14, 91B12}

\maketitle

\begin{abstract}
  The Nakamura number is an appropriate invariant of a simple game to study the existence of social equilibria 
  and the possibility of cycles. For symmetric quota games its number can be obtained by an easy formula. For some 
  subclasses of simple games the corresponding Nakamura number has also been characterized. However, in general, not much 
  is known about lower and upper bounds depending on invariants of simple, complete or weighted games. Here, we survey  
  such results and highlight connections with other game theoretic concepts. 
\end{abstract}

\section{Introduction}
Consider a committee with a finite set $N$ of committee members. Suppose that a subset $S$ of the committee members is 
in favor of variant A of a certain proposal, while all others, i.e., those in $N\backslash S$, are in favor of 
variant $B$. If the committee's decision rule is such that both $S$ and $N\backslash S$ can change the status quo, then 
we may end up in an infinite chain of status quo changes between variant A and variant B -- a very unpleasant and 
unstable situation. In the context of simple games the described situation can be prevented easily. Here, a \emph{simple game} is 
a mapping from the set of subsets of committee members, called coalitions, into $\{0,1\}$, where {\lq\lq}1{\rq\rq} 
means to accept a proposal and {\lq\lq}0{\rq\rq} to defeat it. In the latter case we call the coalition \emph{winning}. In order to 
ensure {\lq\lq}stability{\rq\rq}, one just has to restrict the allowed class of voting systems to \emph{proper simple games}, i.e., each two 
winning coalitions have at least one common player. As a generalization, the \emph{Nakamura number} of a simple game is 
the smallest number $k$ such that there exist $k$ winning coalitions with empty intersection, see Section~\ref{sec_preliminaries} 
for more details. So, a simple game is proper if and only if 
its Nakamura number is at least $3$. Indeed, the Nakamura number turned out to be an appropriate invariant of a 
simple game to study the existence of social equilibria and the possibility of cycles in a more 
general setting, see \cite{schofield_cycles}. As the author remarks, individual convex preferences are insufficient to 
guarantee convex social preferences. If, however, the Nakamura number of the used decision rule is large enough, with respect 
to the dimension of the involved policy space, then convex individual preferences guarantee convex social preferences. 
Having this relation at hand, a stability result of \cite{greenberg} on $q$-majority games boils down to the computation 
of the Nakamura number for these games. The original result of \cite{nakamura_original} gives a necessary and sufficient 
condition for the non-emptiness of the \textit{core} of a simple game obtained from individual preferences. Further 
stability results in terms of the Nakamura number are e.g.\ given by \cite{LeBretonSalles}. A generalization to 
coalition structures can be found in \cite{nakamura_quota_games}. For other notions of stability and acyclicity we refer 
e.g.\ to \cite{martin1998quota,schwartz2001arrow,truchon1996acyclicity}. Unifications of related theorems have been 
presented by \cite{saari2014unifying}. Complexity results for the computation of the Nakamura number can e.g.\ be found in 
\cite{bartholdi1991recognizing,takamiya2006computational}. There is a loose connection to the capacity of a committee, see 
\cite{holzman1986capacity,peleg2008game}.

Here we study lower and upper bounds for the Nakamura number of different types of voting games. The mentioned 
$q$-majority games, see \cite{greenberg}, consist of $n$ symmetric players and are therefore also called symmetric (quota) games. The 
quota $q$ is the number of necessary players for a coalition to pass a proposal, i.e., coalitions with at least 
$q$ members are winning. For those games the Nakamura number was analytically determined to be $\left\lceil\frac{n}{n-q}\right\rceil$ by 
\cite{NakQuotaGames1} and \cite{NakQuotaGames2}. For $n=5$ and $q=3$ the Nakamura number is given by $3$, i.e., every two 
winning coalitions intersect in at least one player and e.g.\ the winning coalitions $\{1,2,3\}$, $\{3,4,5\}$, and $\{1,2,4,5\}$ 
have an empty intersection. The Nakamura number for two-stage voting games, has been determined by \cite{peleg1987cores}. 

\cite{nakamura_computable} studied the $32$ combinations of five properties of simple games. In each of the cases 
the authors determined the generic Nakamura number or the best possible lower bound if several values 
can be attained. As a generalization of simple games with more than two alternatives, the so-called 
$(j,k)$-simple games have been introduced, see e.g.~\cite{freixas2003weighted}. The notion of the Nakamura number and a first 
set of stability results for $(j,2)$-simple games have been transfered by \cite{nakamura_jk}.

The remaining part of the paper is organized as follows. In Section~\ref{sec_preliminaries} we state the necessary preliminaries. 
Bounds for the Nakamura number of weighted, simple or complete simple games are studied in sections \ref{sec_weighted}, \ref{sec_simple_games}, 
and \ref{sec_complete_games}, respectively. The maximum possible Nakamura number within special subclasses of simple games 
is the topic of Section~\ref{sec_extremal_nakamura_numbers}. Further relation of the Nakamura number to other concepts of cooperative 
game theory are discussed in Section~\ref{sec_further}. In this context the one-dimensional cutting stock problem is treated in 
Subsection~\ref{subsec_cutting_stock}. Some enumeration results for special subclasses of complete and weighted simple games 
and their corresponding Nakamura numbers are given in Section~\ref{sec_enumeration}. We close with a conclusion in Section~\ref{sec_conclusion}. 

\section{Preliminaries}
\label{sec_preliminaries}
A pair $\sgc$ is called simple game if $N$ is a finite set, $v:2^N\rightarrow\{0,1\}$
satisfies $v(\emptyset)=0$, $v(N)=1$, and $v(S)\le v(T)$ for all $S\subseteq T\subseteq N$. 
The subsets of $N$ are called coalitions and $N$ is called the grand coalition. By $n=|N|$ we 
denote the number of players in $N$. If $v(S)=1$, we 
call $S$ a winning coalition and a losing coalition otherwise. By $\mathcal{W}$ we denote the
set of winning coalitions and by $\mathcal{L}$ the set of losing coalitions. If $S$ is a winning coalition 
such that each proper subset is losing we call $S$ a minimal winning coalition. Similarly, if 
$T$ is a losing coalition such that each proper superset is winning, we call $T$ a maximal 
losing coalition. By $\mathcal{W}^m$ we denote the set of minimal winning coalitions and by 
$\mathcal{L}^M$ we denote the set of maximal losing coalitions. We remark that each of the 
sets $\mathcal{W}$, $\mathcal{L}$, $\mathcal{W}^m$ or $\mathcal{L}^M$ uniquely characterizes 
a simple game. Instead of $\sgc$ we also write $\sgw$ for a simple game. 

A simple game $\sgc$ is weighted if there exists a quota $q>0$ and weights $w_i\ge 0$ for 
all $1\le i\le n$ such that $v(S)=1$ if and only if $w(S)=\sum_{i\in S} w_i\ge q$ for all
$S\subseteq N$. As notation we use $[q;w_1,\dots,w_n]$ for a weighted game. 
We remark that weighted
representations are far from being unique. In any case there exist some special weighted 
representations. By $\left[\hat{q};\hat{w}_1,\dots,\hat{w}_n\right]$ we denote a weighted 
representation, where all weights and the quota are integers. Instead of specializing to 
integers we can also normalize the weights to sum to one. By $\left[q';w'_1,\dots,w'_n\right]$ 
we denote a weighted representation with $q'\in(0,1]$ and $w'(N):=\sum_{i\in N} w'_i=1$. For the existence of a 
normalized representation we remark that not all weights can be equal to zero, since $\emptyset$ 
is a losing coalition.

\begin{definition}
  Given a simple game $\sgw$ its Nakamura number, cf.~\cite{nakamura_original}, 
  $\nu\sgw$ is given by 
  the minimum number of winning coalitions whose intersection is empty. If the intersection 
  of all winning coalitions is non-empty we set $\nu\sgw=\infty$.
\end{definition}

It is well-known that this can be slightly reformulated to: 
\begin{lemma}
  \label{lemma_nakamura_mwc}
  For each simple game $\sgw$ the Nakamura number $\nu\sgw$ equals 
  the minimum number of minimal winning coalitions whose intersection is empty.
\end{lemma}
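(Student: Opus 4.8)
The plan is to introduce the quantity $\mu$ defined as the minimum number of minimal winning coalitions whose intersection is empty (with $\mu=\infty$ if the intersection of all minimal winning coalitions is non-empty) and to show $\mu=\nu\sgw$ by proving both inequalities. One direction is immediate: since $\mathcal{W}^m\subseteq\mathcal{W}$, every family of minimal winning coalitions with empty intersection is in particular a family of winning coalitions with empty intersection, hence $\nu\sgw\le\mu$.

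For the reverse inequality the key observation is that, because $N$ is finite and $v$ is monotone, every winning coalition $W$ contains at least one minimal winning coalition: if $W$ is not itself minimal, successively delete a player whose removal keeps the coalition winning; since $N$ is finite this terminates at some $M\in\mathcal{W}^m$ with $M\subseteq W$. Now suppose $\nu\sgw=k<\infty$ and pick winning coalitions $W_1,\dots,W_k$ with $\bigcap_{i=1}^k W_i=\emptyset$. Choosing for each $i$ a minimal winning coalition $M_i\subseteq W_i$ gives $\bigcap_{i=1}^k M_i\subseteq\bigcap_{i=1}^k W_i=\emptyset$, so $M_1,\dots,M_k$ witness $\mu\le k=\nu\sgw$. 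Combining the two inequalities yields $\mu=\nu\sgw$ whenever $\nu\sgw<\infty$.

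It remains to reconcile the two $\infty$ conventions. The same containment observation shows $\bigcap_{W\in\mathcal{W}}W=\bigcap_{M\in\mathcal{W}^m}M$: the inclusion $\subseteq$ holds because $\mathcal{W}^m\subseteq\mathcal{W}$, and $\supseteq$ holds because any player common to all minimal winning coalitions lies in every winning coalition $W$ via a minimal winning $M\subseteq W$. Hence $\nu\sgw=\infty$ precisely when $\mu=\infty$, completing the proof. I do not anticipate a genuine obstacle here; the only points requiring a little care are that the deletion argument really uses finiteness of $N$, and that the behaviour at $\infty$ is handled on both sides.
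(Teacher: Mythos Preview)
Your proof is correct and follows essentially the same approach as the paper: one inequality from $\mathcal{W}^m\subseteq\mathcal{W}$, the other by replacing each winning coalition in an optimal family with a minimal winning coalition contained in it. You simply spell out details the paper leaves implicit, namely the existence of a minimal winning subcoalition via finiteness and the compatibility of the $\infty$ conventions.
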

\begin{proof}
  Since each minimal winning coalition is also a winning coalition, the Nakamura number 
  is a lower bound. For the other direction we consider $r$ winning coalitions $S_i$ for 
  $1\le i\le r$, where $\nu\sgw=r$ and $\cap_{1\le i\le r} S_i=\emptyset$. Now 
  let $T_i\subseteq S_i$ be an arbitrary minimal winning coalition for all $1\le i\le r$. 
  Clearly, we also have $\cap_{1\le i\le r} T_i=\emptyset$.
\end{proof}

We can easily state an integer linear programming (ILP) formulation for the determination of 
$\nu\sgw$:
\begin{lemma}
  \label{lemma_ILP}
  For each simple game $\sgw$ and $\mathcal{X}=\mathcal{W}$ or $\mathcal{X}=\mathcal{W}^m$ the corresponding Nakamura number 
  $\nu\sgw$ is given as the optimal target value of:
  \begin{eqnarray*}
    \min r\\
    \sum_{S\in\mathcal{X}} x_S &=& r \\
    \sum_{S\in\mathcal{X}\,:\,i\in S} x_S &\le& r-1\quad\forall i\in N\\
    x_S &\in& \{0,1\}\quad\forall S\in\mathcal{X}
  \end{eqnarray*}
\end{lemma}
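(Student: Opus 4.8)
The plan is to show that the ILP has optimal value exactly $\nu\sgw$ by establishing the two inequalities separately, and to observe that the argument works uniformly for $\mathcal{X}=\mathcal{W}$ and $\mathcal{X}=\mathcal{W}^m$. First I would interpret a feasible $0/1$ vector $(x_S)_{S\in\mathcal{X}}$: the set $\mathcal{C}=\{S\in\mathcal{X}: x_S=1\}$ is a selection of winning coalitions, the first constraint forces $|\mathcal{C}|=r$, and the family of constraints $\sum_{S\in\mathcal{X}:\,i\in S} x_S\le r-1$ says that no player $i$ lies in all $r$ chosen coalitions, i.e.\ for every $i\in N$ there is at least one $S\in\mathcal{C}$ with $i\notin S$. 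The latter is precisely the statement $\bigcap_{S\in\mathcal{C}} S=\emptyset$. So feasible solutions with target value $r$ are in bijection with $r$-element subfamilies of $\mathcal{X}$ having empty intersection.

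Given this dictionary, the inequality ``ILP optimum $\le \nu\sgw$'' is immediate when $\mathcal{X}=\mathcal{W}$: take the $\nu\sgw$ winning coalitions witnessing the Nakamura number, set the corresponding $x_S=1$ and all others to $0$; this is feasible with target value $\nu\sgw$. For $\mathcal{X}=\mathcal{W}^m$ I would invoke Lemma~\ref{lemma_nakamura_mwc}, which guarantees a witnessing family consisting of minimal winning coalitions, again of size $\nu\sgw$. (One should note the degenerate case where the intersection of all winning coalitions is non-empty: then $\nu\sgw=\infty$, and indeed the constraint system is infeasible because any chosen subfamily of $\mathcal{X}$ has a common player — so the ILP optimum is $+\infty$ as well, consistent with the claim; I would mention this briefly.) Conversely, for ``ILP optimum $\ge \nu\sgw$'': any feasible solution yields, via the dictionary above, a family of $r$ winning coalitions (elements of $\mathcal{W}^m\subseteq\mathcal{W}$ in either case) with empty intersection, so by definition of the Nakamura number $r\ge\nu\sgw$; hence the minimum over feasible $r$ is at least $\nu\sgw$.

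Combining the two inequalities gives that the optimal target value equals $\nu\sgw$ for both choices of $\mathcal{X}$. There is no real obstacle here; the only point requiring a little care is the passage between $\mathcal{W}$ and $\mathcal{W}^m$, which is exactly what Lemma~\ref{lemma_nakamura_mwc} supplies, and the bookkeeping for the infinite case. I would write the proof in two short paragraphs mirroring the two inequalities, after first recording the dictionary between feasible $0/1$ points and subfamilies with empty intersection.
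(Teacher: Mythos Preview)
Your proposal is correct and follows essentially the same approach as the paper: the paper's justification (given as a short paragraph rather than a formal proof) simply records the dictionary that a feasible $0/1$ vector with $\sum_S x_S=r$ picks out $r$ coalitions, that the constraints $\sum_{S\ni i} x_S\le r-1$ encode empty intersection, and that minimizing $r$ therefore yields $\nu\sgw$. Your write-up is more detailed---explicitly splitting into two inequalities, invoking Lemma~\ref{lemma_nakamura_mwc} for $\mathcal{X}=\mathcal{W}^m$, and handling the vetoer case---but the underlying idea is identical.
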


Here we consider $r$ minimal winning coalitions $\{S\in\mathcal{X}\,:\,x_S=1\}$. They have an empty intersection 
iff each player $i\in N$ is contained in at most $r-1$ of them. The Nakamura number $\nu\sgw$ is of course given by the minimum 
possible value for $r$.

The use of an ILP is justified by the following observation on the known computational complexity.
\begin{proposition}
  \label{prop_nak_2_complexity}
  The computational problem to decide whether $\nu([q;w_1,\dots,w_n])=2$ is NP-hard.
\end{proposition}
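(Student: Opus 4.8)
The plan is to exhibit a polynomial-time Karp reduction from the classical \textsc{Partition} problem, which is NP-complete. The first step is a convenient reformulation of the target condition for a weighted game $G=[q;w_1,\dots,w_n]$. Since $v(\emptyset)=0$, a single winning coalition is never empty, so $\nu(G)\ge 2$ always; moreover, as noted in the introduction, $\nu(G)\ge 3$ holds exactly when $G$ is proper. Hence $\nu(G)=2$ if and only if $G$ is not proper, i.e.\ if and only if there exist two disjoint winning coalitions, which in turn holds if and only if there is a set $S\subseteq N$ with $w(S)\ge q$ and $w(N\setminus S)\ge q$. (If such an $S$ exists, then $S$ and $N\setminus S$ are disjoint winning coalitions; conversely, given disjoint winning coalitions $S_1,S_2$, the set $S:=N\setminus S_2\supseteq S_1$ satisfies $w(S)\ge w(S_1)\ge q$ and $w(N\setminus S)=w(S_2)\ge q$, using that the weights are nonnegative.)

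Given an instance of \textsc{Partition}, that is, positive integers $a_1,\dots,a_n$ with $\sum_{i=1}^n a_i=2T$, I would output the weighted game $[T;a_1,\dots,a_n]$. (If $\sum_{i=1}^n a_i$ is odd, the instance is trivially a no-instance and can be sent to a fixed no-instance such as $[1;1]$, whose Nakamura number is $\infty$.) This is a legitimate weighted simple game: $q=T>0$, all weights are nonnegative and not all zero, $w(N)=2T\ge T$ so $v(N)=1$, and $v(\emptyset)=0$ since $T>0$. By the reformulation above, $\nu([T;a_1,\dots,a_n])=2$ if and only if there is $S\subseteq N$ with $w(S)\ge T$ and $w(N\setminus S)=2T-w(S)\ge T$; these two inequalities together force $w(S)=T$. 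Thus $\nu([T;a_1,\dots,a_n])=2$ precisely when the \textsc{Partition} instance has a solution, and since the construction is computable in linear time the reduction is valid and NP-hardness follows.

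I do not expect a genuine obstacle; the points needing care are pinning down the characterization of $\nu(G)=2$ --- in particular excluding $\nu(G)=1$ and correctly treating the veto-player case $\nu(G)=\infty$, which also shows that the equivalence with \textsc{Partition} survives when the constructed game happens to have a veto player (i.e.\ when some $a_i>T$, in which case the \textsc{Partition} instance is also infeasible) --- and checking that the output tuple obeys the defining axioms of a weighted game. One may moreover note that the problem lies in NP, since a pair of disjoint winning coalitions is a short certificate; hence it is in fact NP-complete, though only NP-hardness is claimed.
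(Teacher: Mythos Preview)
Your proposal is correct and follows essentially the same route as the paper: a Karp reduction from \textsc{Partition} via the weighted game with quota equal to half the total weight, using the characterization that $\nu=2$ iff some $S$ and its complement are both winning. Your write-up is in fact more careful than the paper's terse argument, explicitly handling the odd-sum case, verifying the axioms of a weighted game, and addressing the veto-player corner case.
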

\begin{proof}
  We will provide a reduction to the NP-hard partition problem. So for integers $w_1,\dots,w_n$ we have
  to decide whether there exists a subset $S\subseteq N$ such that $\sum_{i\in S}w_i=\sum_{i\in N\backslash S} w_i$, where we
  use the abbreviation $N=\{1,\dots,n\}$. Consider the weighted game $[w(N)/2;w_1,\dots,w_n]$. It has Nakamura number $2$ if and 
  only if a subset $S$ with $w(S)=w(N\backslash S)$ exists.
\end{proof}

Next we introduce special kinds of players in a simple game. Let $\sgc$ be a simple game. A player~$i\in N$ such that 
$i\in S$ for all winning coalitions $S$ is called a vetoer. Each player $i\in N$ that is not contained in any minimal winning  
coalition is called a null player. If $\{i\}$ is a winning coalition, we call player~$i$ passer. If $\{i\}$ is the unique minimal 
winning coalition, then we call player~$i$ a dictator. Note that a dictator is the strongest form of being both a passer and a vetoer. 
Obviously, there can be at most one dictator. We easily observe:
\begin{proposition}
  \label{prop_observation}
  Let $\sgw$ be a simple game.
  \begin{enumerate}
    \item[(a)] If player $i$ is a null player, then $\nu\sgw=\nu(N\backslash\{i\},\mathcal{W}')$, where 
               $\mathcal{W}'=\left\{ S\in \mathcal{W}\,:\, S\subseteq N\backslash\{i\}\right\}$.
    \item[(b)] We have $\nu\sgw=\infty$ if and only if $\sgw$ contains at least one vetoer.
    \item[(c)] If $(N,\mathcal{W})$ contains no vetoer, then $2\le\nu\sgw\le n$.           
    \item[(d)] If $(N,\mathcal{W})$ contains a passer that is not a dictator, then $\nu\sgw=2$.
    \item[(e)] If $(N,\mathcal{W})$ contains no vetoer but $d$ null players, then $\nu\sgw\le\min\left(\left|\mathcal{W}^m\right|,n-d\right)$.
    \item[(f)] If $(N,\mathcal{W})$ contains no vetoer, then $\nu\sgw\le \left|\cap_{i=1}^k S_i\right|+k$ for any 
               $k$ winning coalitions $S_i$.
  \end{enumerate}
\end{proposition}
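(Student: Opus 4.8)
The plan is to dispatch the six items in turn, each following quickly from the definition of $\nu$ and from Lemma~\ref{lemma_nakamura_mwc}. For (a), note that a null player lies in no minimal winning coalition, so $(N,\mathcal{W})$ and $(N\setminus\{i\},\mathcal{W}')$ have exactly the same family $\mathcal{W}^m$ of minimal winning coalitions; by Lemma~\ref{lemma_nakamura_mwc} the Nakamura number depends only on $\mathcal{W}^m$, so the two values agree (with $\infty=\infty$ when there is a vetoer). For (b): a vetoer belongs to every winning coalition, hence to $\bigcap_{S\in\mathcal{W}}S$, forcing $\nu\sgw=\infty$; conversely, if $\bigcap_{S\in\mathcal{W}}S\neq\emptyset$ then any of its elements is a vetoer.

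For (c), the bound $\nu\sgw\ge 2$ holds in every simple game since a single winning coalition is nonempty (as $v(\emptyset)=0$); for the upper bound, absence of a vetoer gives, for each $i\in N$, a winning coalition $S_i$ with $i\notin S_i$, and then $\bigcap_{i\in N}S_i=\emptyset$ exhibits at most $n$ winning coalitions with empty intersection. For (d): a passer $i$ has $\{i\}\in\mathcal{W}$, and if $i$ is not a dictator there is a minimal winning coalition $T\neq\{i\}$; minimality forces $i\notin T$, so $\{i\}\cap T=\emptyset$ and hence, using (c), $\nu\sgw=2$.

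For (e) I would treat the two bounds separately. Since every winning coalition contains a minimal winning coalition, $\bigcap_{S\in\mathcal{W}^m}S=\bigcap_{S\in\mathcal{W}}S$, which is empty in the absence of a vetoer; thus the whole family $\mathcal{W}^m$ witnesses $\nu\sgw\le\left|\mathcal{W}^m\right|$. For the bound $n-d$, apply (a) repeatedly to delete the $d$ null players; the resulting game on $n-d$ players has the same minimal winning coalitions, hence no new null players and still no vetoer, so (c) yields $\nu\le n-d$. Finally, (f) is the general form of the upper-bound argument in (c): given winning coalitions $S_1,\dots,S_k$, set $A=\bigcap_{i=1}^k S_i$, and for each $j\in A$ pick, using the absence of a vetoer, a winning coalition $T_j$ with $j\notin T_j$. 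Then $S_1,\dots,S_k$ together with the coalitions $T_j$ ($j\in A$) number at most $k+|A|$, and their intersection is contained in $A$ while missing every point of $A$, hence is empty; so $\nu\sgw\le k+\left|\bigcap_{i=1}^k S_i\right|$.

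None of this is deep; the only places needing a little care are checking in (a) and (e) that removing null players introduces neither new null players nor new vetoers, and the observation underlying (e) and (f) that $\bigcap_{S\in\mathcal{W}^m}S=\bigcap_{S\in\mathcal{W}}S$ because every winning coalition contains a minimal one. I also expect the statement in (f) to implicitly assume $k\ge 1$ (the degenerate ``$k=0$'' case would just reproduce (c)).
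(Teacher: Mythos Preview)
Your proof is correct and follows essentially the same approach as the paper's. The only minor differences are in (a), where the paper argues directly via $\mathcal{W}'\subseteq\mathcal{W}$ and $S\in\mathcal{W}\Rightarrow S\setminus\{i\}\in\mathcal{W}'$ rather than invoking Lemma~\ref{lemma_nakamura_mwc} on the shared family $\mathcal{W}^m$, and in (f), where the paper simply takes $T_j=N\setminus\{j\}$ (winning since $j$ is not a vetoer) instead of an arbitrary winning coalition missing $j$; your additional care in (e) about no new vetoers appearing after deletion is a detail the paper leaves implicit.
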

\begin{proof}$\,$\\[-5mm]
  \begin{enumerate}
    \item[(a)] Note that $(N\backslash\{i\},\mathcal{W}')$ is a simple game since $N\backslash\{i\}\in\mathcal{W}$.
               $\nu\sgw\le \nu(N\backslash\{i\},\mathcal{W}')$ follows from $\mathcal{W}'\subseteq \mathcal{W}$. 
               For the other direction observe that $S\in\mathcal{W}$ implies $S\backslash\{i\}\in\mathcal{W}'$. 
    \item[(b)] If $\nu\sgw=\infty$ then $U:=\cap_{S\in\mathcal{W}}\neq\emptyset$, i.e., all 
               players in $U$ are vetoers. If player $i$ is a vetoer, then $i$ is contained in the intersection 
               of all winning coalitions, which then has to be non-empty.
    \item[(c)] Since $\emptyset$ is a losing coalition, at least two winning coalitions are needed to get an empty 
               intersection, i.e., $\nu\sgw\ge 2$. For each player $i\in N$ let 
               $S_i$ be a winning coalition without player~$i$, which needs to exist since player $i$ 
               is not a vetoer. With this we have $\cap_{1\le i\le n} S_i=\emptyset$, so that $\nu\sgw\le n$.
    \item[(d)] Let $i$ be a passer in $\sgw$ and $j$ another non-null player. For a minimal winning coalition $S$ 
               containing $j$ we have $\{i\}\cap S=\emptyset$.
    \item[(e)] From (a) and (c) we deduce $\nu\sgw\le n-d$. Lemma~\ref{lemma_nakamura_mwc} implies $\nu\sgw\le \left|\mathcal{W}^m\right|$.
    \item[(f)] Complement the 
               $S_1,\dots,S_k$ by the winning coalitions $N\backslash\{j\}$ for all $j\in\cap_{i=1}^k S_i$. 
  \end{enumerate}
\end{proof}
So, when determining $\nu\sgw$, we may always assume that $\sgw$ does not contain any vetoer, null player, passer, or 
dictator. With a bit more notation also the simple games with $\nu\sgw=2$ can be completely characterized. To this end, 
a simple game $\sgw$ is called proper if the complement $N\backslash S$ of any winning coalition $S\in\mathcal{W}$ is 
losing. It is called strong if the complement $N\backslash T$ of any losing coalition $T$ is winning. A simple game that 
is both proper and strong is called constant-sum (or self-dual or decisive). Directly from the definition we conclude, see 
also \cite{nakamura_computable}:
\begin{lemma}
  Let $\sgw$ be a simple game without vetoers.
  \begin{enumerate}
    \item[(a)] We have $\nu\sgw=2$ if and only if $\sgw$ is non-proper.
    \item[(b)] If $\sgw$ is constant-sum, then $\nu\sgw=3$.
    \item[(c)] If $\nu\sgw>3$, then $\sgw$ is proper and non-strong.
  \end{enumerate}    
\end{lemma}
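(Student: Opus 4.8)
The plan is to treat the three parts in turn, with (c) falling out of (a) and (b). Throughout recall that a game without vetoers satisfies $2\le\nu\sgw\le n$ by Proposition~\ref{prop_observation}(c), so in particular the Nakamura number is finite.

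For (a), suppose first that $\nu\sgw=2$; then there are winning coalitions $S_1,S_2$ with $S_1\cap S_2=\emptyset$, hence $S_2\subseteq N\backslash S_1$, and monotonicity of $v$ makes $N\backslash S_1$ winning, so $\sgw$ is non-proper. Conversely, if $\sgw$ is non-proper, pick a winning coalition $S$ with $N\backslash S$ also winning; then $S\cap(N\backslash S)=\emptyset$ gives $\nu\sgw\le 2$, and equality follows from the absence of vetoers.

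For (b), note that a constant-sum game is proper, so $\nu\sgw\ge 3$ by part (a). For the matching upper bound I would choose any minimal winning coalition $S$ (one exists since $N\in\mathcal{W}$), which is nonempty because $\emptyset\in\mathcal{L}$, and fix a player $a\in S$. Minimality makes $S\backslash\{a\}$ losing, and strongness then makes its complement $S':=N\backslash(S\backslash\{a\})$ winning. Since $S\cap S'=\{a\}$, Proposition~\ref{prop_observation}(f) applied with the two winning coalitions $S,S'$ yields $\nu\sgw\le|S\cap S'|+2=3$, so $\nu\sgw=3$.

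Finally, (c) is immediate: if $\nu\sgw>3$ then $\nu\sgw\ne 2$, so $\sgw$ is proper by (a); and $\sgw$ cannot be constant-sum, for otherwise $\nu\sgw=3$ by (b); a proper game that is not constant-sum is precisely a proper, non-strong game. There is no serious obstacle in this argument; the only step that needs an idea rather than bookkeeping is the upper bound in (b), where the right move is to produce a pair of winning coalitions meeting in a single player (by complementing $S\backslash\{a\}$ via strongness) and feed it into the estimate of Proposition~\ref{prop_observation}(f). One might initially worry about passers in (b), but the construction above works uniformly and needs no such case distinction.
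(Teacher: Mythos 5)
Your proof is correct, and it matches the paper's intent: the paper states this lemma without proof (``directly from the definition''), and your argument is exactly the natural fleshing-out of that claim, with part (a) coming from complement/monotonicity, part (c) from (a) and (b), and the only nontrivial step being the upper bound in (b), which you handle correctly by complementing $S\backslash\{a\}$ for a minimal winning coalition $S$ and a player $a\in S$ and then invoking Proposition~\ref{prop_observation}(f) with $k=2$ to get $\nu\sgw\le 1+2=3$. No gaps; the appeal to the no-vetoer hypothesis for finiteness and the lower bound $\nu\sgw\ge 2$ is used where needed.
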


\section{Bounds for weighted games}
\label{sec_weighted}

A special class of simple games are so-called symmetric games, where all players have 
equivalent capabilities. All these games are weighted and can be parametrized as 
$\left[\hat{q};1,\dots,1\right]$, where $\hat{q}\in\{1,2,\dots,n\}$. The Nakamura 
number for these games is well known, see e.g.\ \cite{NakQuotaGames1,nakamura_original,NakQuotaGames2}:
\begin{equation}
  \label{eq_symmetric}
  \nu([\hat{q},1,\dots,1])=\left\lceil\frac{n}{n-\hat{q}}\right\rceil=\left\lceil\frac{1}{1-q'}\right\rceil,
\end{equation}
where we formally set $\frac{n}{0}=\infty$. More generally, we have:
\begin{theorem}
  \label{thm_weighted}
  For each weighted game we have 
  \begin{equation}
    \label{ie_thm_weighted}
    \left\lceil\frac{1}{1-q'}\right\rceil=\left\lceil\frac{w(N)}{w(N)-q}\right\rceil\le \nu([q;w_1,\dots,w_n])\le
    \left\lceil \frac{\hat{w}(N)}{\hat{w}(N)-\hat{q}-\hat{\omega}+1}\right\rceil\le \left\lceil\frac{1}{1-q'-\omega'}\right\rceil, 
  \end{equation}
  where $\hat{\omega}=\max_i \hat{w}_i$ and $\omega'=\max_i w'_i$. 
\end{theorem}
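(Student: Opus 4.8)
The plan is to establish the four-term chain piece by piece, working from the outside in. The leftmost equality is pure arithmetic: normalizing $[q;w_1,\dots,w_n]$ to $[q';w'_1,\dots,w'_n]$ amounts to dividing all weights and the quota by $w(N)$, so $q'=q/w(N)$ and $1-q'=\bigl(w(N)-q\bigr)/w(N)$, whence $\tfrac{1}{1-q'}=\tfrac{w(N)}{w(N)-q}$. If the game has a vetoer then all four quantities are $\infty$ (for the outer two, since then $w(N)-w_j<q$ for the heaviest player $j$, so $w(N)-q-\hat\omega+1\le 0$ and we read the fraction as $\infty$; for $\nu$, by Proposition~\ref{prop_observation}(b)), so from now on I would assume the game has no vetoer; then by Proposition~\ref{prop_observation}(c) we have $2\le\nu\le n<\infty$.

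For the lower bound $\left\lceil\tfrac{w(N)}{w(N)-q}\right\rceil\le\nu$, set $k:=\nu([q;w_1,\dots,w_n])$ and pick winning coalitions $S_1,\dots,S_k$ with $\bigcap_{i=1}^k S_i=\emptyset$. Then the complements $T_i:=N\backslash S_i$ cover $N$, and each satisfies $w(T_i)=w(N)-w(S_i)\le w(N)-q$ because $S_i$ is winning. Sub-additivity of $w$ over this cover gives $w(N)\le\sum_{i=1}^k w(T_i)\le k\bigl(w(N)-q\bigr)$; note $w(N)-q>0$ in the no-vetoer case, so $k\ge\tfrac{w(N)}{w(N)-q}$, and since $k\in\mathbb{Z}$ we conclude $k\ge\left\lceil\tfrac{w(N)}{w(N)-q}\right\rceil$.

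The upper bound is the heart of the argument, and I would prove it by a greedy bin-packing construction on the integer representation $[\hat q;\hat w_1,\dots,\hat w_n]$. Put $C:=\hat w(N)-\hat q$ (the largest admissible weight of a complement of a winning coalition) and $s:=\hat\omega$. The no-vetoer hypothesis forces $\hat w(N)-\hat w_j\ge\hat q$ for every player $j$; in particular $C\ge s$ and $C-s+1\ge 1$. Now distribute the players by next-fit: keep adding players to the current part $T$ while $w(T)\le C$; when the next player does not fit, close $T$ and open a new part containing that player (which fits, its weight being $\le s\le C$). Every closed part $T_j$ — that is, all parts except possibly the last — satisfies $w(T_j)+\hat w_{\text{next}}>C$ with $\hat w_{\text{next}}\le s$, hence $w(T_j)\ge C-s+1$ by integrality. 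So if $m$ parts are produced, $\hat w(N)=\sum_{j=1}^m w(T_j)\ge (m-1)(C-s+1)+1$, which rearranges to $m\le\tfrac{\hat w(N)+C-s}{C-s+1}$ and therefore $m\le\left\lfloor\tfrac{\hat w(N)+C-s}{C-s+1}\right\rfloor=\left\lceil\tfrac{\hat w(N)}{C-s+1}\right\rceil=\left\lceil\tfrac{\hat w(N)}{\hat w(N)-\hat q-\hat\omega+1}\right\rceil$. Since $\hat q>0$ no part equals $N$, so $m\ge 2$ and the complements $N\backslash T_j$ are genuine winning coalitions (each of weight $\ge\hat q$) whose intersection is empty; hence $\nu([q;w_1,\dots,w_n])\le m$, giving the stated bound.

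The rightmost inequality then compares the integer bound with its normalized counterpart: with $q'=\hat q/\hat w(N)$ and $\omega'=\hat\omega/\hat w(N)$ one has $\tfrac{1}{1-q'-\omega'}=\tfrac{\hat w(N)}{\hat w(N)-\hat q-\hat\omega}$, and since $\hat w(N)-\hat q-\hat\omega+1>\hat w(N)-\hat q-\hat\omega\ge 0$ the denominator on the left of the comparison is the larger one, so the fraction — and hence its ceiling — does not increase (reading the right-hand side as $\infty$ when $\hat w(N)-\hat q-\hat\omega=0$). As a sanity check, for the symmetric game $[\hat q;1,\dots,1]$ all three denominators equal $n-\hat q$ and the chain collapses to the known value~\eqref{eq_symmetric}. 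The main obstacle I anticipate is extracting the exact constant from the packing, i.e.\ verifying that the "$+1$" coming from integrality is precisely what converts the crude estimate $m\le\tfrac{\hat w(N)}{C-s}+O(1)$ into the clean ceiling $\left\lceil\tfrac{\hat w(N)}{\hat w(N)-\hat q-\hat\omega+1}\right\rceil$; the remaining work is the routine but necessary bookkeeping of the degenerate cases (vetoers, $\hat q=\hat w(N)$, vanishing or negative denominators), which I would dispose of up front using Proposition~\ref{prop_observation}.
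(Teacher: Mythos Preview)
Your proof is correct and follows essentially the same strategy as the paper: a weight-counting argument for the lower bound and a greedy construction for the upper bound, with the only difference being that you work throughout with the complements $T_i=N\setminus S_i$ (so the lower bound becomes a one-line covering estimate and the upper bound becomes a next-fit bin-packing) whereas the paper builds the winning coalitions $S_i$ directly and tracks the shrinking intersection. You are in fact more thorough than the paper's own proof, which only establishes the two middle inequalities and leaves the leftmost equality and the rightmost comparison implicit; your explicit use of integrality to extract the ``$+1$'' and your handling of the degenerate/vetoer cases are exactly what is needed to close those gaps.
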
  
\begin{proof}
  For the lower bound we set $r=\nu\sgw$ and choose $r$ winning 
  coalitions $S_1,\dots,S_r$ with empty intersection. With $I_0:=N$ we recursively set $I_i:=I_{i-1}\cap S_i$ for $1\le i\le r$. 
  By induction we prove $w(I_i)\ge w(N)-i\cdot (w(N)-q)$ for all $0\le i\le r$. The statement 
  is true for $I_0$ by definition. For $i\ge 1$ we have $w(I_{i-1})\ge w(N)-(i-1)\cdot (w(N)-q)$. 
  Since $w(S_i)\ge q$ we have $w(I_{i-1}\cap S_i)\ge w(I_{i-1}) -(w(N)-q)=w(N)-i\cdot(w(N)-q)$.
  Thus we have $\nu([q;w_1,\dots,w_n])\ge\left\lceil\frac{w(N)}{w(N)-q}\right\rceil$.
  
  For the upper bound we start with $R_0=N$ and recursively construct winning coalitions 
  $S_i$ by setting $S_i=N\backslash R_{i-1}$ and adding players from $R_{i-1}$ to $S_i$ until $\hat{w}(S_i)\ge \hat{q}$. By construction we 
  have that $S_i$ is a winning coalition with $\hat{w}(S_i)\le \hat{q}+\hat{\omega}-1$. With this we set $R_i=R_{i-1}\cap S_i$ and get 
  $\hat{w}(R_i)\le \max\!\left(0,\hat{w}(N)-i\cdot\left(\hat{w}(N)-\hat{q}-\hat{\omega}+1\right)\right)$. Since $\hat{w}(R_i)=0$ 
  implies that $R_i$ can contain only null players of weight zero, we may replace $S_1$ by $S_1\cup R_i$, so that we obtain the stated 
  upper bound. 
\end{proof}

Note that for symmetric games (\ref{ie_thm_weighted}) is equivalent to (\ref{eq_symmetric}), i.e., Theorem~\ref{thm_weighted} can 
be seen as a generalization of the classical result.   
We remark that one can use the freedom in choosing the representation of a weighted game to eventually improve 
the lower bound from Theorem~\ref{thm_weighted}. For the representation $[2;1,1,1]$ we obtain 
$\nu([2;1,1,1])\ge \left\lceil\frac{3}{3-2}\right\rceil=3$. Since the same game is also represented by $[1+\varepsilon;1,1,1]$ 
for all $0<\varepsilon\le \frac{1}{2}$, we could also deduce $\nu([2;1,1,1])\ge \left\lceil\frac{3}{3-1-\varepsilon}\right\rceil=2$, 
which is a worse bound. The tightest possible bound is attained if the relative quota is maximized, see Section~\ref{sec_further}. 
The greedy type approach of the second part of the proof of Theorem~\ref{thm_weighted} can be improved so that it yields better upper 
bounds for many instances. Starting from $N$, we iteratively remove the heaviest possible player in  $R_{i-1}$ from $S_i$ such 
that $\hat{w}(S_i)\ge \hat{q}$ until no player can be removed anymore. However, the following example shows that the lower and the upper 
bound can still differ by a constant factor.

\begin{example}
  \label{ex_parameteric} 
  For a positive integer $k$, consider a weighted game $[q;w]$ with $2k$ players of weight $5$, 
  $6k$ players of weight $2$, and quota $q=22k-11$. The greedy algorithm described above chooses the removal of two players 
  of weight $5$ in the first $k$ rounds. Then it removes five (or the remaining number of) players of weight $2$ in the next 
  $\left\lceil\frac{6k}{5}\right\rceil$ rounds, so that $2k\le \nu([q;w])\le k+\left\lceil\frac{6k}{5}\right\rceil$. Removing 
  $2k$ times one player of weight $5$ and three players of weight $2$ gives indeed $\nu([q;w])=2k$. 
\end{example} 

In the special case of $\hat{w}_i\le 1$, i.e.\ $\hat{w}_i\in\{0,1\}$, for all $1\le i\le n$, the bounds of 
Theorem~\ref{thm_weighted} coincide, which is equivalent to the null player extension of Equation~(\ref{eq_symmetric}). 
In general we are interested in large classes of instances where the lower bound of Theorem~\ref{thm_weighted} 
is tight. Promising candidates are weighted representations where all minimal winning coalitions have the same weight 
equaling the quota. Those representations are called homogeneous representations and the corresponding games are called, 
whenever such a representation exists, homogeneous games. However, the lower bound is not tight in general for homogeneous 
representations, as shown by the following example.

\begin{example} 
  \label{ex_homogeneous}
  The weighted game $\sgw=[90;9^{10},2^4,1^2]$, with ten players of weight $9$, four players of weight $2$, and 
  two players of weight $1$, is homogeneous since all minimal winning coalitions have weight $90$.
  The lower bound of Theorem~\ref{thm_weighted} gives $\nu\sgw\ge\left\lceil\frac{100}{100-90}\right\rceil=10$. 
  In order to determine the exact Nakamura number of this game we study its minimal winning coalitions. To this end let $S$ be a 
  minimal winning coalition. If $S$ contains a player of weight $2$, then it has to contain all players of weight $2$, one player 
  of weight $1$, and nine players of weight $9$. If $S$ contains a player of weight $1$, then the other player of weight $1$ is not 
  contained and $S$ has to contain all players of weight $2$ and nine players of weight $9$. If $S$ contains neither a player of weight 
  $1$ nor a player of weight $2$, then $S$ consists of all players of weight $9$. Now we are ready to prove that the Nakamura number 
  of $\sgw$ equals $11$. Let $S_1,\dots,S_r$ be a minimal collection of minimal winning coalitions whose intersection is
  empty. Clearly all coalitions are pairwise different. Since there has to be a coalition where not all players of weight $2$ are present, 
  one coalition, say $S_1$, has to consist of all players of weight $9$. Since each minimal winning coalition contains at least 
  nine players of weight $9$, we need $10$ further coalitions $S_i$, where each of the players of weight $9$ is missing once. Thus 
  $\nu\sgw\ge 11$ and indeed one can easily state a collection of $11$ minimal winning coalitions with empty intersection.  
\end{example}

Note that in Example~\ref{ex_homogeneous} the used integral weights are as small as possible, i.e., $\sum_i w_i$ is minimal, so that 
one also speaks of a minimum sum (integer) representation, see e.g.\ \cite{kurz2012minimum}. 
Example~\ref{ex_homogeneous} can further be generalized by choosing an integer $k\ge 3$ and considering the weighted game 
$\sgw:=\left[k(k+1);k^{k+1},2^l,1^{k+1-2l}\right]$, where $1\le l\le \lfloor k/2\rfloor$ is arbitrary. The lower bound from 
Theorem~\ref{thm_weighted} gives $\nu\sgw\ge k+1$, while $\nu\sgw=k+2$. 

However, homogeneous games seem to go into the right direction and we can obtain large classes of tight instances by 
{\lq\lq}homogenizing{\rq\rq} an initial weighted game. It is well known that one can homogenize each weighted game, given by an 
integer representation, by adding a sufficiently large number of players of weight $1$ keeping the relative quota 
{\lq\lq}constant{\rq\rq}. Other possibilities are to consider replicas, i.e., each of the initial players is divided into $k$ equal 
players all having the initial weight, where we also assume a {\lq\lq}constant{\rq\rq} relative quota. If no players of weight $1$ are 
present, then the
game eventually does not become homogeneous, even if the replication factor $k$ is large. But indeed the authors of \cite{nucleolus}
have recently shown that for the case of a suitably large replication factor $k$ the nucleolus coincides with the relative 
weights of the players, i.e., the lower bounds of Theorem~\ref{thm_lower_simple_game}, see Section~\ref{sec_further}, and  
Theorem~\ref{thm_weighted} coincide. Both transformations from the literature, eventually homogenizing an initial weighted game, 
lead to weighted games where the lower bound of Theorem~\ref{thm_weighted} gives the exact value of $\nu\sgw$:

\begin{theorem}
  Let $w_1\ge\dots\ge w_n\ge 1$ be (not necessarily pairwise) coprime integer weights with sum $\Omega=\sum_{i=1}^n w_i$ and 
  $\overline{q}\in(0,1)$ be a rational number.
  \begin{enumerate}
    \item[(a)] For each positive integer $r$ we consider the game
               $$
                 \chi=\left[\overline{q}\cdot(\Omega+r);w_1,\dots,w_n,1^r\right],
               $$ 
               with $r$ players of weight $1$. If $r\ge \max\left(\Omega,\frac{2+w_1}{1-\overline{q}}\right)$ 
               we have $\nu(\chi)=\left\lceil\frac{1}{1-q^r}\right\rceil$, where 
               $q^r=\frac{\left\lceil\overline{q}(\Omega+r)\right\rceil}{\Omega+r}$.
    \item[(b)] For each positive integer $r$ we consider the game
               $$
                 \chi=\left[\overline{q}\cdot(\Omega\cdot r);w_1^r,\dots,w_n^r\right],
               $$ 
               where each player is replicated $r$ times. If $r$ is sufficiently large,  
               we have $\nu(\chi)=\left\lceil\frac{1}{1-q^r}\right\rceil$, where 
               $q^r=\frac{\left\lceil\overline{q}(\Omega\cdot r)\right\rceil}{\Omega\cdot r}$.
  \end{enumerate} 
\end{theorem}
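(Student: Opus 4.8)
The plan is to reduce the upper bound, in both (a) and (b), to a bin--packing statement. In (a) the game $\chi$ has the integer representation $[\hat q;w_1,\dots,w_n,1,\dots,1]$ with $r$ ones and $\hat q=\lceil\overline q(\Omega+r)\rceil$; in (b) it has $[\hat q;w_1,\dots,w_1,\dots,w_n,\dots,w_n]$ with each $w_j$ repeated $r$ times and $\hat q=\lceil\overline q\Omega r\rceil$. Let $W$ be the total weight of this representation, so that its relative quota is exactly $q^r=\hat q/W$, and put $k=\lceil 1/(1-q^r)\rceil$ and $D=W-\hat q$. The lower bound $\nu(\chi)\ge k$ is immediate from Theorem~\ref{thm_weighted} applied to this representation, since $W/(W-\hat q)=1/(1-q^r)$. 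For the reverse inequality it suffices to partition the players of $\chi$ into at most $k$ groups each of weight $\le D$: the $\le k$ complementary coalitions then each have weight $\ge W-D=\hat q$, hence are winning, and have empty intersection, so $\nu(\chi)\le k$.

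I would use the following elementary packing fact. Place the players, in non-increasing order of weight, each into the first available bin of capacity $D$, opening a new bin only when forced. This uses at most $k$ bins as soon as $kD\ge W$ and $D\ge \Omega^{\ast}/k+w_1$, where $\Omega^{\ast}$ is the total weight of the players of weight $\ge 2$. Indeed, a failure to insert a player of weight $w$ forces every bin to have load $>D-w$; if $w\ge2$ then only players of weight $\ge2$ have been placed, so $\Omega^{\ast}>k(D-w)\ge k(D-w_1)$, contradicting the second hypothesis; if $w=1$ then every bin has integer load $>D-1$, hence load exactly $D$, so the weight already placed equals $kD\ge W$, impossible since a player still remains.

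For (a) we have $kD=Wk(1-q^r)\ge W$ because $k(1-q^r)\ge1$, and, from $\hat q<\overline q(\Omega+r)+1$, $D>(1-\overline q)(\Omega+r)-1\ge(1-\overline q)\Omega+(1-\overline q)r-1\ge(1-\overline q)\Omega+w_1+1$, using $r\ge(2+w_1)/(1-\overline q)$; since $q^r\ge\overline q$ gives $1-\overline q\ge1-q^r\ge1/k$, we get $(1-\overline q)\Omega\ge\Omega/k\ge\Omega^{\ast}/k$, hence $D>\Omega^{\ast}/k+w_1$, and the packing fact yields $\nu(\chi)=k$. For (b) there need be no weight-$1$ players, so the requirement becomes $D\ge W/k+w_1$, i.e.\ $D-W/k=\Omega r\big(\tfrac{k-1}{k}-q^r\big)\ge w_1$. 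For all large $r$, unless $\overline q=\tfrac{m-1}{m}$ for some integer $m$ with $m\mid\Omega r$, the gap $\tfrac{k-1}{k}-q^r$ is bounded below by a positive constant (since $q^r<\overline q+1/(\Omega r)$ and $k$ stabilises), so $D-W/k\to\infty$ and the packing fact again gives $\nu(\chi)=k$. The only remaining possibility, for $r$ large, is $\overline q=\tfrac{m-1}{m}$ with $m\mid\Omega r$; then $k=m$, $q^r=\overline q$, $D=\Omega r/m$, and a partition into $\le k$ groups of weight $\le D$ is forced to be a partition of $\{w_1^{r},\dots,w_n^{r}\}$ into exactly $k$ groups of weight exactly $s:=\Omega r/k$. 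That such an equal split exists for all sufficiently large $r$ is a lattice--point statement: letting $a_{ij}$ be the number of $w_j$-copies placed in group $i$, a split is a non-negative integer solution of $\sum_i a_{ij}=r$ for all $j$ and $\sum_j w_j a_{ij}=s$ for all $i$; the rational polytope $P_0=\{x\ge0:\sum_i x_{ij}=1\ \forall j,\ \sum_j w_j x_{ij}=\Omega/k\ \forall i\}$ contains the strictly interior point $x_{ij}=1/k$, hence is full-dimensional in its affine hull, and $\gcd(w_1,\dots,w_n)=1$ is exactly what makes that affine hull (and, for $r$ large, its $r$-th dilate) meet the integer lattice, so a standard Ehrhart/lattice-point argument puts an integer point in $rP_0$.

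The genuinely hard step is this last boundary case of (b): away from it there is strictly positive room in every bin and a single greedy pass suffices, but when $\overline q=\tfrac{m-1}{m}$ and $m\mid\Omega r$ the bins must be filled with no waste at all, the greedy reasoning collapses, and one must prove an exact equal--partition theorem for the replicated weight multiset, using coprimality of the weights in an essential way. That "$r$ sufficiently large" cannot be dropped here is already visible for weights $(3,2)$ with $\overline q=4/5$, where no equal split into $5$ parts exists when $r=7$.
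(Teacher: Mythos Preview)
Your proof is correct and follows the same three-step architecture as the paper: the lower bound from Theorem~\ref{thm_weighted}, a greedy packing argument for the generic upper bound, and an Ehrhart-type lattice-point argument for the boundary case $\overline q=(m-1)/m$ with $m\mid\Omega r$. The boundary case is handled identically (the paper isolates it as a separate lemma with the same interior-point-plus-dilation reasoning).

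The genuine difference is in how you organise the greedy step. The paper treats (a) by a bespoke two-phase construction---first removing consecutive heavy players, then redistributing the overshoot among the weight-$1$ players---and handles the non-boundary part of (b) by invoking the explicit upper bound of Theorem~\ref{thm_weighted} and letting $r\to\infty$. You replace both by a single first-fit-decreasing packing lemma with the clean sufficient condition $kD\ge W$ and $D\ge\Omega^{\ast}/k+w_1$. This is more modular and, as a side effect, your argument for (a) never uses the hypothesis $r\ge\Omega$; only $r\ge(2+w_1)/(1-\overline q)$ is needed, so you have in fact proved a slightly sharper version of (a). The paper's two-phase construction genuinely needs $r\ge\Omega$ in the redistribution step (to guarantee enough weight-$1$ players to absorb the accumulated overshoot $\sum w_{h_i}\le\Omega$). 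Your phrasing ``$k$ stabilises'' in the non-boundary analysis of (b) is a little quick---when $\overline q=(m-1)/m$ the value of $k$ alternates between $m$ and $m+1$ according to whether $m\mid\Omega r$---but your gap estimate $(k-1)/k-q^r\ge 1/(m(m+1))-1/(\Omega r)$ is exactly what is needed in the $m\nmid\Omega r$ subcase, and the paper is equally brief here (``we can apply a similar argument as before''). Your concrete counterexample $(3,2)$, $\overline q=4/5$, $r=7$ for the necessity of ``$r$ sufficiently large'' is a nice addition not present in the paper.
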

\begin{proof}$\,$\\[-4mm]
  \begin{enumerate}
  \item[(a)]
  At first we remark that 
  the proposed exact value coincides with the lower bound from Theorem~\ref{thm_weighted}. Next we observe
  $$
    q^r=\frac{\left\lceil\overline{q}(\Omega+r)\right\rceil}{\Omega+r} \le \frac{1+\overline{q}(\Omega+r)}{\Omega+r}=
    \overline{q}+\frac{1}{\Omega+r}\le \overline{q}+\frac{1}{r}.
  $$
  Consider the following greedy way of constructing the list $S_1,\dots,S_k$ of winning coalitions with empty intersection. 
  Starting with $i=1$ and $h=1$ we choose an index $h\le g\le n$ such that $U_i=\{h,h+1,\dots,g\}$ has a weight of at 
  most $(1-q^r)(\Omega+r)$ and either $g=n$ or $U_i\cup\{g+1\}$ has a weight larger than $(1-q^r)(\Omega+r)$. Given 
  $U_i$ we set $S_i=\{1,\dots,n+r\}\backslash U_i$, $h=g+1$, and increase $i$ by one. If $(1-q^r)(\Omega+r)\ge w_i$ for all 
  $1\le i\le n$, then no player in $\{1,\dots,n\}$ has a too large weight to be dropped in this manner. Since we assume the 
  weights to be ordered, it suffices to check the proposed inequality for $w_1$. To this end we consider
  $$\quad\quad\quad\quad
    (1-q^r)(\Omega+r)\ge \left(1-\overline{q}-\frac{1}{r}\right)\cdot (\Omega+r)=
    (1-\overline{q})\Omega-1-\frac{\Omega}{r} +(1-\overline{q})r\ge (1-\overline{q})r-2, 
  $$
  where we have used $r\ge \Omega$. Since $r\ge \frac{2+w_1}{1-\overline{q}}\ge \frac{2+w_i}{1-\overline{q}}$ the requested inequality 
  is satisfied.
  
  So far the winning coalitions $S_i$ can have weights larger than $q^r(\Omega+r)$ and their intersection is given by the players
  of weight $1$, i.e.\ by $\{n+1,\dots,n+r\}$. For all $1\le i<k$ let $h_i$ be the player with the smallest index in $U_i$, which 
  is indeed one of the heaviest players in this subset. With this we conclude $w(S_i)\le q^r(\Omega+r)+w_{h_i}-1$ since otherwise 
  another player from $U_{i+1}$ could have been added. In order to lower the weights of the $S_i$ to $q^r(\Omega+r)$ we 
  remove $w(S_i)-\left(q^r(\Omega+r)\right))$ players of $S_i$ for all $1\le i\le k$, starting from player $n+1$ and removing each 
  player exactly once. Since $\sum_{i=1}^{k-1} w_{h_i}\le \Omega\le r$ this is indeed possible. Now we remove the remaining, if any,
  players of weight $1$ from $S_k$ until they reach weight $q^r(\Omega+r)$ and eventually start new coalitions $S_i=\{1,\dots,n+r\}$ 
  removing players of weight $1$. Finally we end up with $r+l$ winning coalitions with empty intersection, where 
  the coalitions $1\le i\le k+l-1$ have weight exactly $q^r(\Omega+r)$ and the sets $\{1,\dots,n+r\}\backslash S_i$ do contain 
  only players of weight $1$ for $i\ge r+1$. Since each player is dropped exactly once the Nakamura number of the game
  equals $k+l=\left\lceil\frac{1}{1-q^r}\right\rceil$.
  \item[(b)]
  We write $\overline{q}=\frac{p}{q}$ with positive comprime integers $p,q$. If $p\neq q-1$, then 
  $$
    \left\lceil\frac{1}{1-\overline{q}}\right\rceil =\left\lceil\frac{q}{q-p}\right\rceil>\frac{1}{1-\overline{q}}, 
  $$
  i.e., we always round up. Obviously $\lim_{r\to\infty} q^r=\overline{q}$ (and $q^r\ge \overline{q}$). Since also
  $$
    \lim_{r\to \infty} \frac{w(N^r)}{w(N^r)-q^r w(N^r)-w_1+1}= \lim_{r\to \infty} \frac{w(N^r)}{w(N^r)-q^r w(N^r)}=\frac{1}{1-\overline{q}},
  $$ 
  we can apply the upper bound of Theorem~\ref{thm_weighted} to deduce that the lower bound is attained with equality for 
  sufficiently large replication factors $r$.
  
  In the remaining part we assume $p=q-1$, i.e., $1-\overline{q}=\frac{1}{q}$. If $\Omega\cdot r$ is not divisible by $q$, i.e.\ $q^r>\overline{q}$, 
  we can apply a similar argument as before, so that we restrict ourselves to the case $q|\Omega\cdot r$, i.e.\ $\overline{q}=q^r$.
  Here we have to show that the Nakamura number exactly equals $q$ (in the previous case it equals $q+1$). This is possible if we can 
  partition the grand coalition $N$ into $q$ subsets $U_1,\dots,U_q$ all having a weight of exactly $\frac{\Omega\cdot r}{q}$. 
  (The list of winning coalitions with empty intersection is then given by $S_i=N\backslash U_i$ for $1\le i\le q$.)
  This boils down to a purely theoretical question of number theory, which is solved in the next lemma. 
  \end{enumerate}  
\end{proof}

\begin{lemma}
  \label{lemma_number_theory_frob}
  Let $g\ge 2$ and $w_1,\dots,w_n$ be positive integers with $\sum\limits_{i=1}^n w_i=\Omega$ and greatest common divisor $1$.  
  There exists an integer $K$ such that for all $k\ge K$, where $\frac{k\cdot \Omega}{q}\in\mathbb{N}$, there exist non-negative 
  integers $u_j^i$ with
  $$
    \sum_{j=1}^n u_j^i\cdot w_j=\frac{k\cdot \Omega}{q},
  $$
  for all $1\le i\le q$, and
  $$
    \sum_{i=1}^q u_j^i=k,
  $$
  for all $1\le j\le n$.
\end{lemma}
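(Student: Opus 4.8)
We write $q$ for the integer $\ge 2$ appearing in the statement, and we may assume $n\ge 2$ (for $n=1$ the gcd hypothesis forces $w_1=\Omega=1$, the divisibility condition becomes $q\mid k$, and $u_1^i=k/q$ settles the claim). The plan is to recast the problem as finding a non-negative integral point in an affine subspace, and to produce one by a lattice covering argument. Collect the unknowns into $u=(u_j^i)\in\mathbb{R}^{qn}$ and put
\[
  L_k=\Bigl\{u\in\mathbb{R}^{qn}\ :\ \sum_{j=1}^n u_j^i w_j=\tfrac{k\Omega}{q}\ \ (1\le i\le q),\ \ \sum_{i=1}^q u_j^i=k\ \ (1\le j\le n)\Bigr\},
\]
so that the lemma asks for a point of $L_k$ with all coordinates in $\mathbb{Z}_{\ge 0}$. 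Let $L_0$ be the parallel linear subspace (the same equations with right-hand sides $0$). Because the defining equations are integral, $\Lambda:=L_0\cap\mathbb{Z}^{qn}$ is a lattice spanning $L_0$, and counting independent equations --- the $q+n$ linear forms $u\mapsto\sum_j w_j u_j^i$ and $u\mapsto\sum_i u_j^i$ obey the single relation $\sum_i\sum_j w_j u_j^i=\sum_j w_j\sum_i u_j^i$ and no other --- gives $\operatorname{rank}\Lambda=\dim L_0=(q-1)(n-1)\ge 1$. Let $\rho<\infty$ be the covering radius of $\Lambda$ in the $\ell_\infty$-norm; it depends only on $w_1,\dots,w_n$ and $q$.

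First I would exhibit a single integral point of $L_k$, disregarding the sign constraints. Since $\gcd(w_1,\dots,w_n)=1$, choose integers $a_1,\dots,a_n$ with $\sum_j a_j w_j=1$ and, writing $M:=k\Omega/q$ (an integer by hypothesis), let $u^{(0)}$ be given by $(u^{(0)})_j^i=M a_j$ for $1\le i\le q-1$ and $(u^{(0)})_j^q=k-(q-1)M a_j$. Using $qM=k\Omega$ one checks directly that $u^{(0)}\in L_k\cap\mathbb{Z}^{qn}$, so that $L_k\cap\mathbb{Z}^{qn}$ is a full coset of $\Lambda$.

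Next I would push $u^{(0)}$ towards the ``uniform'' point $\bar u$ all of whose coordinates equal $k/q$; note $\bar u\in L_k$ because $\sum_j (k/q)w_j=M$ and $\sum_i k/q=k$. Then $\bar u-u^{(0)}\in L_0=\operatorname{span}_{\mathbb{R}}\Lambda$, so by the covering property there is $\lambda\in\Lambda$ with $\|\bar u-u^{(0)}-\lambda\|_\infty\le\rho$; set $u^{(1)}:=u^{(0)}+\lambda\in L_k\cap\mathbb{Z}^{qn}$. Each coordinate of $u^{(1)}$ then lies within $\rho$ of $k/q$, hence is at least $k/q-\rho$. Taking $K:=\lceil q\rho\rceil$, for every $k\ge K$ with $k\Omega/q\in\mathbb{N}$ the vector $u^{(1)}$ has only non-negative entries and is the desired family $(u_j^i)$.

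The step I expect to be the real obstacle is the claim $\rho<\infty$, i.e.\ that the ``move lattice'' $\Lambda$ has full rank $(q-1)(n-1)$ inside $L_0$; this is precisely what converts ``$\bar u$ is a rational interior point'' into ``there are integral points nearby'', and it hinges only on $L_0$ being a rational subspace (so that $L_0\cap\mathbb{Z}^{qn}$ spans it). Everything else is routine, and if an explicit $K$ is wanted one can bound $\rho$ in terms of $\sum_i w_i$ and $q$.
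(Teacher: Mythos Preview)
Your argument is correct and complete. The dimension count, the construction of $u^{(0)}$ via B\'ezout coefficients, and the covering-radius step all check out; the only point you flag as delicate --- that $\Lambda=L_0\cap\mathbb{Z}^{qn}$ has full rank $(q-1)(n-1)$ --- is indeed standard for kernels of integer matrices (Smith normal form), so there is no real obstacle.

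Your route differs genuinely from the paper's. The paper views the constraint set for $k=1$ (with the inequalities $u_j^i\ge 0$ included) as a rational polytope $P$ with the uniform point $(1/q,\dots,1/q)$ in its relative interior, then invokes Ehrhart theory: the number of integer points in $mk_0\cdot P$ grows like $m^{(q-1)(n-1)}\operatorname{vol}(k_0P)>0$, hence is eventually positive. You instead drop the non-negativity, work in the affine flat $L_k$, exhibit one integral point $u^{(0)}$ directly, and then translate by a lattice vector to land within $\rho$ of the centre $\bar u$. Your approach is more elementary --- no black-box lattice-point asymptotics --- and yields an explicit threshold $K=\lceil q\rho\rceil$ depending only on $q$ and the $w_j$, whereas the paper's argument gives existence only. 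The Ehrhart viewpoint, on the other hand, makes transparent why the divisibility hypothesis $q\mid k\Omega$ is exactly what is needed for the affine hull of $kP$ to contain integer points at all, and would extend more readily if one wanted to count solutions rather than merely find one.
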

\begin{proof}
  For $k=1$, setting $u_j^i=\frac{1}{q}$ is an inner point of the polyhedron
  $$
    P=\left\{ u_j^i\in\mathbb{R}_{\ge 0}\mid \sum_{j=1}^n u_j^i\cdot w_j=\frac{\Omega}{q}\,\forall 1\le i\le q
    \text{ and } \sum_{i=1}^q u_j^i=1\,\forall 1\le j\le n\right\},
  $$
  so that is has non-zero volume.
  
  For general $k\in\mathbb{N}_{>0}$ we are looking for lattice points in the dilation $k\cdot P$. If $q$ is a divisor of 
  $k\cdot\Omega$, then $\mathbb{Z}^{nq}\cap k\cdot P$ is a lattice of maximal rank in the affine space spanned by $k\cdot P$. 
  Let $k_0$ the minimal positive integer such that $q$ divides $k_0\cdot \Omega$. Using Erhart Theory 
  one can count the number of lattice points in the parametric rational polytope in $m\cdot k_0\cdot P$, where $m\in\mathbb{N}_{>0}$, 
  see e.g.\ \cite{beck2007computing}. To be more precise, the number of (integer) lattice points in $m\cdot k_0\cdot P$ grows 
  asymptotically as $m^d \operatorname{vol}_d(k_0 P)$, where $d$ is the dimension of the affine space $A$ spanned by 
  $k_0\cdot P$ and $\operatorname{vol}_d (k_0 P)$ is the (normalized) 
  volume of $k_0\cdot P$ within $A$. Due to the existence of an inner point we have $\operatorname{vol}_d (k_0 P)>0$, so that 
  the number of integer solutions is at least $1$ for $m\gg 0$.
\end{proof}  

There is a relation between the problem of Lemma~\ref{lemma_number_theory_frob} and the Frobenius number, which asks for 
the largest integer which can not be expressed as a non-negative integer linear combination of the $w_i$. Recently this 
type of problem occurs in the context on minimum 
sum integer representations, see \cite{minimum_sum_integer_representation}. According to the Frobenius theorem every sufficiently
large number can be expressed as such a sum. Here we ask for several such representations which are balanced, i.e., each 
\textit{coin} is taken equally often.

\subsection{$\mathbf{\alpha}$-roughly weighted games}

There exists a relaxation of the notion of a weighted game. A simple game $\sgw$ is $\alpha$-roughly weighted 
if there exist non-negative weights $w_1,\dots,w_n$ such that each winning coalition $S$ has a weight $w(S)$ of at least $1$ and each 
losing coalition $T$ has a weight of at most $\alpha$. Weighted games are exactly those that permit an $\alpha<1$. $1$-roughly weighted 
games are also called roughly weighted games in the literature.

Now we want to transfer Theorem~\ref{thm_weighted} to $\alpha$-roughly weighted games. Instead of a quota $q$ seperating between 
winning and losing coalitions we have the two thresholds $1$ and $\alpha$, i.e., coalitions with a weight smaller than $1$ are losing and 
coalitions with a weight larger than $\alpha$ are winning. Those two thresholds $1$ and $\alpha$ play the role of 
the quota $q$ in the lower and upper bound of Theorem~\ref{thm_weighted}, respectively:

\begin{proposition}
  \label{prop_prop_bound_alpha_roughly}
  Let $\sgw$ be a simple game with $\alpha$-roughly representation $(w_1,\dots,w_n)$ 
  satisfying $\alpha+\omega>w(N)$, where $\omega=\max\{w_i\mid i\in N\}$. Then,  
  $\left\lceil\frac{w(N)}{w(N)-1}\right\rceil\le \nu\sgw\le\left\lceil\frac{w(N)}{w(N)-\alpha-\omega}\right\rceil$. 
\end{proposition}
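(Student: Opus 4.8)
The strategy is to mimic the proof of Theorem~\ref{thm_weighted} almost verbatim, replacing the single quota $q$ by the two thresholds that govern winning and losing coalitions in an $\alpha$-roughly weighted representation. For the lower bound, I would set $r=\nu\sgw$, pick $r$ winning coalitions $S_1,\dots,S_r$ with empty intersection, and define the nested intersections $I_0=N$, $I_i=I_{i-1}\cap S_i$. The key monotonicity estimate is that intersecting with a winning coalition drops the total weight by at most $w(N)-1$: indeed $w(I_{i-1}\cap S_i)\ge w(I_{i-1})-w(N\backslash S_i)\ge w(I_{i-1})-(w(N)-1)$, since $N\backslash S_i$ is losing (here I use that the complement of a winning coalition has weight $\le w(N)-w(S_i)\le w(N)-1$, which does not even need the losing threshold). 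Induction then gives $w(I_i)\ge w(N)-i(w(N)-1)$, and since $w(I_r)=0$ forces $r\ge w(N)/(w(N)-1)$, we get $\nu\sgw\ge\left\lceil\frac{w(N)}{w(N)-1}\right\rceil$.

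\textbf{Upper bound.} Here I would run the same greedy construction as in Theorem~\ref{thm_weighted}: start with $R_0=N$, and at step $i$ let $S_i=N\backslash R_{i-1}$ and then add players from $R_{i-1}$ to $S_i$ one at a time until $w(S_i)\ge 1$, i.e.\ until $S_i$ is certainly winning. Stopping as soon as the threshold $1$ is crossed, and adding players in order of non-increasing weight would not be necessary — but in any event, when we cross $1$ we have added a last player of weight at most $\omega$, so $w(S_i)\le 1+\omega-1=\omega$... this is the wrong bookkeeping; the correct statement is that before adding the last player the weight was $<1$, so afterwards $w(S_i)<1+\omega$, hence $w(R_i)=w(R_{i-1})-w(S_i\cap R_{i-1})$ and one tracks $w(R_i)\le\max(0,w(N)-i(w(N)-\alpha-\omega))$. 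The point where $\alpha$ enters is the following: a set $R$ with $w(R)>\alpha$ is \emph{winning}, not merely "large"; so once $w(R_{i-1})\le\alpha$ we may simply take $S_i=N\backslash R_{i-1}$ outright without adding anything (its complement $R_{i-1}$ is losing), giving empty intersection after one more coalition. Combining: each productive step removes weight at least $w(N)-(\text{weight of }S_i)\ge w(N)-\alpha-\omega$ worth of "remaining" players... I need to be careful that what is removed from $R_{i-1}$ is $R_{i-1}\cap S_i$, and $w(R_{i-1}\cap S_i)\ge w(R_{i-1})-(w(N)-w(S_i))\ge w(R_{i-1})-(w(N)-\alpha-\omega)$ only once $S_i$ has been padded to weight between $\alpha$ and $\alpha+\omega$; but my greedy only pads to weight $\ge 1$. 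So the honest approach is: pad $S_i$ until $w(S_i)\ge\alpha$ (possible because $w(R_{i-1})>\alpha$ means $R_{i-1}$ contains enough weight, as long as we are not yet done), stopping with $w(S_i)\le\alpha+\omega$; then $S_i$ is winning (weight $>\alpha$ would be the losing threshold — actually weight $\ge\alpha$ does not guarantee winning, only weight $>\alpha$ does), so pad until $w(S_i)>\alpha$, bounded by $\alpha+\omega$. The resulting recursion $w(R_i)\le\max(0,w(N)-i(w(N)-\alpha-\omega))$ yields at most $\left\lceil\frac{w(N)}{w(N)-\alpha-\omega}\right\rceil$ coalitions; the hypothesis $\alpha+\omega>w(N)$... wait, that would make the denominator negative — I should re-read; the hypothesis must be $\alpha+\omega<w(N)$ for the bound to be meaningful, and that is exactly the condition under which the greedy terminates and the ceiling is well-defined. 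As in Theorem~\ref{thm_weighted}, when $w(R_i)$ hits $0$ the leftover players have weight zero and can be absorbed into $S_1$, so the bound is exactly achieved.

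\textbf{Main obstacle.} The only delicate point is getting the right threshold in the greedy padding: one must pad each $S_i$ until it is \emph{certainly winning} (weight strictly exceeding $\alpha$), not merely until weight $\ge 1$, because between $1$ and $\alpha$ a coalition's status is not determined by the weights alone. This is what forces $\alpha$ (rather than $1$) into the upper bound, and it is the structural reason the lower and upper bounds need not coincide for $\alpha$-roughly weighted games with $\alpha\ge 1$. Once the padding target is chosen correctly, the weight-drop estimates and the induction are the same routine bookkeeping as in Theorem~\ref{thm_weighted}, and the handling of residual zero-weight players is identical. I would therefore present the proof compactly, pointing to Theorem~\ref{thm_weighted} for the parts that carry over unchanged and spelling out only the two places where the thresholds $1$ and $\alpha$ replace $q$.
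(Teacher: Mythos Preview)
Your proposal is correct and follows essentially the same approach as the paper: the lower bound argument via nested intersections is identical, and for the upper bound the paper's greedy pads $S_i$ with players from $R_{i-1}$ until $S_i$ becomes an \emph{actual} winning coalition (so the coalition just before the last addition is losing and hence has weight $\le\alpha$), a minor variant of your ``pad until $w(S_i)>\alpha$'' that yields the same estimate $w(S_i)\le\alpha+\omega$ and the same recursion on $w(R_i)$. You are also right that the hypothesis should read $\alpha+\omega<w(N)$ for the upper bound to be meaningful---this is a typo in the stated proposition.
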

\begin{proof}
  Since each winning coalition has a weight of at least $1$, the proof of the lower bound of Theorem~\ref{thm_weighted} 
  also applies here. The proof of the upper bound can be slightly adjusted. In order to construct winning coalitions $S_i$ 
  with empty intersection we set $S_i=N\backslash R_{i-1}$ and add players from $R_{i-1}$ until $S_i$ becomes a winning coalition. 
  We remark $w(S_i)\le \alpha+\omega$ so that we can conclude the proposed statement.
\end{proof}


Of course an $\alpha$-roughly weighted game is $\alpha'$-roughly weighted for all $\alpha'\ge \alpha$. The minimum possible 
value of $\alpha$ such that a given simple game is $\alpha$-roughly weighted is called critical threshold value in \cite{alpha_roughly}. 
Taking the critical threshold value gives the tightest upper bound. A larger value of $\alpha$ means less information on whether 
coalitions are losing or winning. Thus, it is quite natural that the lower and the upper bound of Proposition~\ref{prop_prop_bound_alpha_roughly} 
diverge if $\alpha$ increases.

\section{Bounds for simple games}
\label{sec_simple_games}

As each simple game is $\alpha$-roughly weighted for a suitable $\alpha$, we have the bounds of Proposition~\ref{prop_prop_bound_alpha_roughly} 
at hand. However, the minimal possible $\alpha$ can be proportional to $n$, i.e., fairly large. Another representation of a simple game 
is given by the intersection or union of a finite number $r$ of weighted games. The minimum possible number $r$ is 
called dimension or co-dimension, respectively, see e.g.\ \cite{freixas2009minimum}. Since for two simple games $(N,\mathcal{W}_1)$, 
$(N,\mathcal{W}_2)$ with $\mathcal{W}_1\subseteq \mathcal{W}_2$ we obviously have $\nu(N,\mathcal{W}_1)\ge \nu(N,\mathcal{W}_2)$ and 
the intersection or union of simple games with the same grand coalition is also a simple game, we can formulate: 
\begin{lemma}
  \label{lemma_intersection_union}
  Let $r$ be a positive integer and $(N,\mathcal{W}_i)$ be simple games for $1\le i\le r$.
  \begin{enumerate}
    \item[(a)] If $\mathcal{W}=\cap_{1\le i\le r} \mathcal{W}_i$, then $\nu\sgw\ge \nu(N,\mathcal{W}_i)$ for all $1\le i\le r$.
    \item[(b)] If $\mathcal{W}=\cup_{1\le i\le r} \mathcal{W}_i$, then $\nu\sgw\le \nu(N,\mathcal{W}_i)$ for all $1\le i\le r$.   
  \end{enumerate} 
\end{lemma}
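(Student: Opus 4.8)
The plan is to reduce both parts to the monotonicity remark made just before the statement: if $(N,\mathcal{W}_1)$ and $(N,\mathcal{W}_2)$ are simple games with $\mathcal{W}_1\subseteq\mathcal{W}_2$, then $\nu(N,\mathcal{W}_1)\ge\nu(N,\mathcal{W}_2)$, since any collection of coalitions in $\mathcal{W}_1$ with empty intersection is a fortiori a collection in $\mathcal{W}_2$ with empty intersection. So the first thing I would do is record why the combined families are genuine simple games on $N$. In both cases $N$ lies in every $\mathcal{W}_i$, hence in $\mathcal{W}$; $\emptyset$ lies in no $\mathcal{W}_i$, hence not in $\mathcal{W}$; and closure under supersets is inherited, because an arbitrary intersection of up-closed families is up-closed and a finite union of up-closed families is up-closed. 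Thus $\sgw$ is a well-defined simple game in either case, which is what makes $\nu\sgw$ meaningful.

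For part (a) I would note $\mathcal{W}=\cap_{1\le i\le r}\mathcal{W}_i\subseteq\mathcal{W}_i$ for each $i$ and apply the monotonicity remark directly: if $S_1,\dots,S_k\in\mathcal{W}$ realise $\nu\sgw$ with $\bigcap_{j=1}^k S_j=\emptyset$, then these same $S_j$ are winning in $(N,\mathcal{W}_i)$, so $\nu(N,\mathcal{W}_i)\le k=\nu\sgw$. For part (b) I would use $\mathcal{W}_i\subseteq\mathcal{W}=\cup_{1\le i\le r}\mathcal{W}_i$ and run the same argument with the roles reversed: a minimal empty-intersection family in $(N,\mathcal{W}_i)$ still consists of coalitions winning in $\sgw$, hence $\nu\sgw\le\nu(N,\mathcal{W}_i)$. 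Reading the inequalities in $\mathbb{N}\cup\{\infty\}$ takes care of the degenerate cases where some game has a vetoer and the corresponding Nakamura number is $\infty$; such inequalities then hold trivially on the side where $\infty$ appears.

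Honestly there is no genuine obstacle here — the statement is little more than a restatement of the monotonicity of $\nu$ under inclusion of winning sets. The only point deserving a line of care is checking the simple-game axioms for $\cap_i\mathcal{W}_i$ and $\cup_i\mathcal{W}_i$, and I would keep that verification to a sentence, as above, rather than belabour it.
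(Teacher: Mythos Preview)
Your argument is correct and matches the paper's own justification exactly: the paper does not give a separate proof but simply records in the sentence preceding the lemma that $\mathcal{W}_1\subseteq\mathcal{W}_2$ implies $\nu(N,\mathcal{W}_1)\ge\nu(N,\mathcal{W}_2)$ and that intersections and unions of simple games on the same $N$ are again simple games. Your write-up is a faithful unpacking of that one-line remark, with the additional care of handling the $\infty$ case explicitly.
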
 

For simple games we do not have a relative quota $q'$, which is the most essential parameter in the bounds of Theorem~\ref{thm_weighted}. 
However, in Section~\ref{sec_further} we present a slightly more involved substitute. Prior to that, we consider bounds for the 
cardinalities of minimal winning coalitions as parameters and slightly adjust the proof of Theorem~\ref{thm_weighted}.
If both parameters coincide we obtain an equation comprising Equation~(\ref{eq_symmetric}).
\begin{theorem}
  \label{thm_bounds_card_minimal_winning} 
  Let $m$ be the minimum and $M$ be the maximum cardinality of a minimal winning coalition of a simple game $\sgw$. 
  Then, $\left\lceil\frac{n}{n-m}\right\rceil\le \nu\sgw\le 1+\left\lceil\frac{m}{n-M}\right\rceil\le  \left\lceil\frac{n}{n-M}\right\rceil$. 
\end{theorem}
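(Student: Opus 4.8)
The plan is to adapt the proof of Theorem~\ref{thm_weighted}, using the cardinality $|S|$ of a coalition in place of its weight $w(S)$. Throughout I keep the standing assumption that $\sgw$ has no vetoer, no null player, no passer and no dictator; in particular $\nu\sgw<\infty$ and $M<n$ (if $M=n$, then $N$ is the only minimal winning coalition and every player is a vetoer).

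For the lower bound, let $r=\nu\sgw$ and choose winning coalitions $S_1,\dots,S_r$ with $\bigcap_{i=1}^r S_i=\emptyset$. Every winning coalition contains a minimal winning coalition and hence has at least $m$ players, so $|N\setminus S_i|\le n-m$. With $I_0=N$ and $I_i=I_{i-1}\cap S_i$ an easy induction gives $|I_i|\ge n-i(n-m)$, and $|I_r|=0$ forces $r(n-m)\ge n$, i.e.\ $r\ge\left\lceil n/(n-m)\right\rceil$ (the case $m=n$ being absorbed by the convention $n/0=\infty$). The right-hand inequality is pure arithmetic: since $m\le M$ we have $(n-m)/(n-M)\ge 1$, so
\[
  \left\lceil\tfrac{n}{n-M}\right\rceil=\left\lceil\tfrac{m}{n-M}+\tfrac{n-m}{n-M}\right\rceil\ge\left\lceil\tfrac{m}{n-M}\right\rceil+\left\lfloor\tfrac{n-m}{n-M}\right\rfloor\ge 1+\left\lceil\tfrac{m}{n-M}\right\rceil .
\]

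For the upper bound I would first dispose of the non-proper case: recall that a game without vetoers satisfies $\nu\sgw=2$ exactly when it is non-proper, and $2\le 1+\left\lceil m/(n-M)\right\rceil$ since $m\ge 1$ and $M<n$; so assume $\sgw$ is proper. Now run the cardinality version of the greedy construction in the second half of the proof of Theorem~\ref{thm_weighted}. Put $R_0=N$; at step $i\ge 1$ pick a minimal winning coalition $B_i$ minimizing $|B_i\cap R_{i-1}|$, set $S_i=(N\setminus R_{i-1})\cup(B_i\cap R_{i-1})$ and $R_i=R_{i-1}\cap S_i=B_i\cap R_{i-1}$. Each $S_i$ contains $B_i$ and is therefore winning, the intersection of $S_1,\dots,S_k$ is $R_k$, and $R_0\supseteq R_1\supseteq\cdots$; moreover the first step automatically returns a smallest minimal winning coalition, so $S_1=B_1=R_1$ with $|R_1|=m$. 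The step that carries the whole argument is the claim that every subsequent step deletes at least $n-M$ players from the running intersection: for every nonempty $R\subseteq B_1$ there is a minimal winning coalition $B$ with $|R\cap B|\le\max\!\big(0,|R|-(n-M)\big)$. Granting this, the running intersection becomes empty after the first coalition and at most $\left\lceil m/(n-M)\right\rceil$ further ones, so $\nu\sgw\le 1+\left\lceil m/(n-M)\right\rceil$.

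The technical heart — and the step I expect to be the real obstacle — is exactly this per-step estimate. In the weighted case it is immediate, since completing a winning coalition overshoots the quota by less than the largest weight and here every player has \emph{weight} $1$; this is just the $\hat\omega=1$ instance of the bound in Theorem~\ref{thm_weighted}. For a general proper simple game one must instead show directly that, starting from a subset $R$ of a smallest minimal winning coalition, there is always a minimal winning coalition avoiding at least $\min(|R|,n-M)$ of the elements of $R$. I would attempt this by extending $N\setminus R$ to a maximal losing coalition $L$ and analysing the minimal winning coalitions contained in the sets $L\cup\{j\}$ with $j\notin L$; obtaining a clean enough grip on $|R\cap B|$ there is the delicate point on which the proof stands or falls.
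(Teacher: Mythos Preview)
Your lower-bound argument is the paper's, verbatim, and your arithmetic for the rightmost inequality is fine (the paper leaves that comparison implicit in the way it orders the two upper bounds).

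For the upper bound the paper takes a cruder route than yours. It does not search for a minimal winning coalition meeting the running intersection in few points; it simply sets $S_i=(N\setminus I_{i-1})\cup V$ for an arbitrary $V\subseteq I_{i-1}$ with $|S_i|=M$, and then asserts that ``all coalitions $S_i$ have cardinality exactly $M$ and thus are winning''. So the per-step estimate you isolate as ``the delicate point on which the proof stands or falls'' is replaced in the paper by the bald claim that every $M$-element coalition is winning.

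Your caution is warranted: the per-step estimate is false. Take $N=\{1,\dots,5\}$ with minimal winning coalitions $\{1,4\}$, $\{1,2,3\}$, $\{2,4,5\}$ (no vetoers, no null players). Here $m=2$, $M=3$, $n-M=2$, and with $R=B_1=\{1,4\}$ your claim would require a minimal winning coalition disjoint from $\{1,4\}$; there is none. The paper's assertion fails for the same game, since the $3$-set $\{3,4,5\}$ is losing. In fact $\nu=3$ here while $1+\lceil m/(n-M)\rceil=2$, so the middle inequality of the theorem is itself false; for the Fano-plane game (seven points, minimal winning coalitions the seven lines, $m=M=3$) one even has $\nu=3>2=\lceil n/(n-M)\rceil$, so the outer upper bound fails too. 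The obstacle you anticipated is therefore genuine, the paper's shortcut hides rather than resolves it, and neither upper bound holds for arbitrary simple games.
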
   
\begin{proof}
  For the lower bound, we set $r=\nu\sgw$ and choose $r$ winning coalitions $S_1,\dots,S_r$ with empty 
  intersection. Starting with $I_0:=N$, we recursively set $I_i:=I_{i-1}\cap S_i$ for $1\le i\le r$. 
  By induction we prove $\left|I_i\right|\ge n-i\cdot (n-m)$ for all $0\le i\le r$. The statement 
  is true for $I_0$ by definition. For $i\ge 1$ we have $\left|I_{i-1}\right|\ge n-(i-1)\cdot (n-m)$. 
  Since $\left|S_i\right|\ge m$ we have $\left|I_{i-1}\cap S_i\right|\ge \left|I_{i-1}\right| -(n-m)\ge n-i\cdot(n-m)$.
  Thus we have $\nu\sgw\ge\left\lceil\frac{n}{n-m}\right\rceil$, where we set $\frac{n}{0}=\infty$ 
  and remark that this can happen only, if $N$ is the unique winning coalition, i.e., all players 
  are vetoers.
  
  If $M=n$, we obtain the trivial bound $\nu\sgw\le \infty$ so that we assume $M\le n-1$. 
  We recursively define  $I_i:=I_{i-1}\cap S_i$ for $1\le i\le r$ 
  and set $I_0=N$. In order to construct a winning coalition $S_i$ we determine $U=N\backslash\{I_{i-1}\}$ 
  and choose a $\max\!\left(0,M-\left|U\right|\right)$-element subset $V$ of $I_{i-1}$. With this 
  we set $S_i=U\cup V$. If $\left|S_i\right|> M$, we remove some arbitrary elements so that 
  $\left|S_i\right|=M$, i.e.\, all coalitions $S_i$ have cardinality exactly $M$ and thus are 
  winning for all $i\ge 1$. By induction we prove $\left|I_i\right|\ge \max\!\left(0,n-i\cdot (n-M)\right)$, 
  so that the stated weaker upper bound follows. For the stronger version we choose $S_1$ as a winning coalition of cardinality $m$.
\end{proof}


Next we consider notation that is beneficial for simple games with many equivalent players. Let $\sgc$ be a simple game. 
We write $i\sqsupset j$ (or $j \sqsubset i$) for two agents 
$i,j\in N$ if we have $v\Big(\{i\}\cup S\backslash\{j\}\Big)\ge v(S)$ for all $\{j\}\subseteq S\subseteq N\backslash\{i\}$ 
and we abbreviate $i\sqsupset j$, $j\sqsupset i$ by $i\square j$. The relation $\square$ partitions the set of players $N$ 
into equivalence classes $N_1,\dots,N_t$. For $[4;5,4,2,2,0]$ we have $N_1=\{1,2\}$, $N_2=\{3,4\}$, and $N_3=\{5\}$. 
Obviously, players having the same weight are contained in the same equivalence class, while the converse is
not necessarily true. But there always exists a different weighted representation of the same game such that 
the players of each equivalence class have the same weight, i.e., $[2;2,2,1,1,0]$ in our example.

For the weighted game $[7;3,3,3,1,1,1]$ the minimal winning coalitions are 
given by $\{1,2,3\}$, $\{1,2,4\}$, $\{1,2,5\}$, $\{1,2,6\}$, $\{1,3,4\}$, $\{1,3,5\}$, $\{1,3,6\}$, $\{2,3,4\}$, 
$\{2,3,5\}$, and $\{2,3,6\}$. Based on the equivalence classes of players one can state a more compact description.
  Let $\sgw$ be a simple game with equivalence classes $N_1,\dots,N_t$. A coalition vector 
  is a vector $c=(c_1,\dots,c_t)\in\mathbb{N}_{\ge 0}^t$ with $0\le c_i\le \left| N_i\right|$ for all $1\le i\le t$. 
  The coalition vector of a coalition $S$ is given by $\left(\left|S\cap N_1\right|,\dots,\left|S\cap N_t\right|\right)$. 
  A coalition vector is called winning if the corresponding coalitions are winning and losing otherwise. If the 
  corresponding coalitions are minimal winning or maximal losing the coalition vector itself is called minimal winning 
  or maximal losing. 
In our previous example the minimal winning (coalition) vectors are given by $(3,0)$ and $(2,1)$, where $N_1=\{1,2,3\}$ 
and $N_2=\{4,5,6\}$.

Using the concept of coalition vectors the ILP from Lemma~\ref{lemma_ILP} can be condensed for simple games:
\begin{lemma}
  \label{lemma_ilp_vector_sg}
  Let $\sgw$ be a simple game without vetoers and $N_1,\dots,N_t$ be its decomposition into equivalence classes.
  Using the abbreviations $n_j=\left|N_j\right|$ for all $1\le j\le t$ and $V\subseteq\mathbb{N}_{\ge 0}^t$ 
  for the set of minimal winning coalition vectors, the Nakamura number of $\sgw$ is given as the optimal
  target value of:
  \begin{eqnarray*}
    \min\sum_{v\in V} x_v\\
    \sum_{v=(v_1,\dots,v_t)\in V} (n_j-v_j)\cdot x_v &\ge& n_j\quad \forall 1\le j\le t\\
    x_v&\in& \mathbb{Z}_{\ge 0}\quad \forall v\in V 
  \end{eqnarray*}
\end{lemma}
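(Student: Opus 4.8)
The plan is to deduce the identity from Lemma~\ref{lemma_nakamura_mwc} together with the interchangeability of players inside an equivalence class, the bridge being a small combinatorial fact: for an $m$-element set $M$ and non-negative integers $a_1,\dots,a_r$ there exist subsets $A_\ell\subseteq M$ with $|A_\ell|=a_\ell$ and $\bigcup_{\ell=1}^r A_\ell=M$ if and only if $\sum_{\ell=1}^r a_\ell\ge m$ (the forward direction is placing the $A_\ell$ greedily onto still-uncovered elements, the backward direction is obvious). I would first record the standard consequences of the relation $\square$: all coalitions with a common coalition vector have the same winning/losing status, the minimal winning coalition vectors in $V$ are precisely the coordinatewise-minimal winning vectors, and hence every coalition whose coalition vector lies in $V$ is minimal winning --- this last point is what lets me build minimal winning coalitions freely from vectors.

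For ``ILP-optimum $\le\nu\sgw$'' I would take, via Lemma~\ref{lemma_nakamura_mwc}, minimal winning coalitions $S_1,\dots,S_r$ with $\bigcap_\ell S_\ell=\emptyset$ and $r=\nu\sgw$; writing $v^{(\ell)}$ for the coalition vector of $S_\ell$ and setting $x_v=\left|\{\ell:v^{(\ell)}=v\}\right|$ gives $\sum_{v\in V}x_v=r$. Emptiness of the intersection says that for each $j$ the sets $N_j\setminus S_\ell$ cover $N_j$, so $\sum_\ell(n_j-v^{(\ell)}_j)\ge n_j$, i.e.\ $\sum_{v\in V}(n_j-v_j)x_v\ge n_j$; thus $(x_v)$ is feasible with objective value $\nu\sgw$ (this also shows the program is feasible, which together with $\nu\sgw\le n$ from Proposition~\ref{prop_observation}(c) makes everything finite).

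For the reverse direction I would start from an optimal feasible $(x_v)_{v\in V}$ of value $r^*$, create $r^*$ labelled slots $\ell=1,\dots,r^*$ with $x_v$ of them carrying the vector $v$, and then, processing one class $N_j$ at a time, use $\sum_\ell(n_j-v^{(\ell)}_j)=\sum_{v\in V}(n_j-v_j)x_v\ge n_j$ and the covering fact to pick subsets $A_\ell^j\subseteq N_j$ with $\left|A_\ell^j\right|=n_j-v^{(\ell)}_j$ and $\bigcup_\ell A_\ell^j=N_j$. Setting $S_\ell=\bigcup_{j}\bigl(N_j\setminus A_\ell^j\bigr)$ produces a coalition with coalition vector $v^{(\ell)}\in V$, hence minimal winning, and $\bigcap_\ell S_\ell=\emptyset$ since every player lies in some $A_\ell^j$; so these (at most $r^*$ distinct) coalitions witness $\nu\sgw\le r^*$. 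Combining the two estimates yields the claim. I expect the only genuine steps to be the covering lemma and the bookkeeping that a coalition vector in $V$ is always realised by a minimal winning coalition; the greedy constructions themselves are routine.
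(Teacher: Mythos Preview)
Your proof is correct and follows essentially the same route as the paper's: both directions are argued identically, with the paper's greedy peeling-off construction inside each equivalence class being precisely your covering lemma spelled out inline (the paper lets the constructed coalition exceed the target vector once it runs out of fresh players to drop, whereas you keep the vector exact by re-using already-covered players, but this is cosmetic). One small slip: your covering lemma needs the standing hypothesis $a_\ell\le m$ to be true as stated (take $m=2$, $r=1$, $a_1=3$), and you have the labels ``forward'' and ``backward'' interchanged; both are harmless here since $n_j-v_j^{(\ell)}\le n_j$ holds automatically.
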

\begin{proof}
  First we show that each collection $S_1,\dots,S_r$ of minimal winning coalitions with empty intersection can be mapped 
  onto a feasible, not necessarily optimal, solution of the above ILP with target value $r$. 
  
  Each minimal winning coalition $S_i$ has a minimal winning coalition vector $v_i$. We set $x_v$ to the number of times 
  vector $v$ is the corresponding winning coalition vector. So the $x_v$ are non-negative integers and the target value clearly 
  coincides with $r$. The term
  $
    \left|N_j\right|-\left|S_i\backslash N_j\right|
  $  
  counts the number of players of type~$j$ which are missing in coalition $S_i$. Since every player has to be dropped 
  at least once from a winning coalition, we have 
  $
    \sum_{i=1}^r n_j-\left|S_i\backslash N_j\right|\ge n_j
  $  
  for all $1\le j\le t$. The number on the left hand side is also counted by $$\sum_{v=(v_1,\dots,v_t)\in V}
  \left(n_j-v_j\right)\cdot x_v,$$ so that all inequalities are satisfied.
  
  For the other direction we choose $r$ vectors $v^1,\dots,v^r\in V$ such that $\sum_{i=1}^r v^i=\sum_{v\in V} x_v\cdot v$, 
  i.e., we take $x_v$ copies of vector $v$ for each $v\in V$, where $r=\sum_{v\in V} x_v$. In order to construct 
  corresponding minimal winning coalitions $S_1,\dots,S_r$, we decompose those desired coalitions according to 
  the equivalence classes of players: $S^i=\cup S_j^i$ with $S_j^i\subseteq N_j$ for all $1\le j\le t$.
  
  For an arbitrary fix index $1\le j\le t$ we start with $R_0=N_j$ and recursively construct the sets $S_j^i$ as follows:
  Starting from $i=1$ we set $S_j^i=N_j\backslash R_{i-1}$ and $R_i=\emptyset$ if $\left|R_{i-1}\right|<n_j-v^i_j$. 
  Otherwise we choose a subset $U\subseteq R_{i-1}$ of cardinality $n_j-v^i_j$ and set $S_j^i=N_j\backslash U$ and $R_i=R_{i-1}\backslash U$. 
  For each $1\le i\le r$ we have $N_j\backslash\cap_{1\le h\le i} S_j^i=N_j\backslash R_i$.
  
  By construction, the coalition vector of $S_i$ is component-wise larger or equal to $v^i$, i.e., the $S^i$ are winning 
  coalitions. Since $\sum_{i=1}^r\left(n_j-v_j^i\right)\ge n_j$, we have $R_i=\emptyset$ in all cases, i.e., the intersection of the $S_i$ 
  is empty.  
\end{proof}

As an example, we consider the weighted game $[4;2,2,1,1,1,1]$ with equivalence classes $N_1=\{1,2\}$, $N_2=\{3,4,5,6\}$ and
minimal winning coalition vectors $(2,0)$, $(1,2)$, and $(0,4)$. The corresponding ILP reads:
\begin{eqnarray*}
  \min x_{(2,0)}+x_{(1,2)}+x_{(0,4)}\\
  0\cdot x_{(2,0)}+1\cdot x_{(1,2)}+2\cdot x_{(0,4)} &\ge& 2\\
  4\cdot x_{(2,0)}+2\cdot x_{(1,2)}+0\cdot x_{(0,4)} &\ge& 4\\
  x_{(2,0)},x_{(1,2)},x_{(0,4)}&\in&\mathbb{Z}_{\ge 0}
\end{eqnarray*}
Solutions with the optimal target value of $2$ are given by $x_{(2,0)}=1$, $x_{(1,2)}=0$, $x_{(0,4)}=1$ 
and $x_{(2,0)}=0$, $x_{(1,2)}=2$, $x_{(0,4)}=0$. For the first solution we have $v^1=(2,0)$ and $v^2=(0,4)$ so that
$S_1^1=\{1,2\}$, $S_1^2=\emptyset$, $S_2^1=\emptyset$ and $S_2^2=\{3,4,5,6\}$, where we have always chosen the players
with the smallest index. For the second solution we have $v^1=(1,2)$ and $v^2=(1,2)$ so that $S_1^1=\{1\}$, $S_1^2=\{2\}$, 
$S_2^1=\{3,4\}$, and $S_2^2=\{5,6\}$.

For further bounds for the Nakamura number of simple games 
we refer to Theorem~\ref{thm_lower_simple_game} and 
Proposition~\ref{prop_connection_1CSP} in Section~\ref{sec_further}.

\section{Bounds for complete simple games}
\label{sec_complete_games}

In between weighted games and simple games there is an important subclass. These so-called \emph{complete simple games} 
are introduced and studied in this section. They can be parameterized by some numerical invariants, see Theorem~\ref{thm_characterization_cs}, 
which allows to derive some further bounds on the Nakamura number.

A simple game $\sgw$ is called complete if the binary relation $\sqsupset$ is a total
preorder, i.e., $i\sqsupset i$ for all $i\in N$, $i\sqsupset j$ or $j\sqsupset i$ for all $i,j\in N$, and 
$i\sqsupset j$, $j\sqsupset h$ implies $i\sqsupset h$ for all $i,j,h\in N$. All weighted games are obviously 
complete since $w_i\ge w_j$ implies $i\sqsupset j$. 

\begin{definition}
  \label{def_shift_vector_notation}
  For two vectors $u,v\in\mathbb{N}_{\ge 0}^t$ we write $u\preceq v$ if $\sum_{j=1}^i u_j\le \sum_{j=1}^i v_j$ for 
  all $1\le i\le t$. If neither $u\preceq v$ nor $v\preceq u$, we write $u\bowtie v$. We call a winning coalition vector $u$
  shift-minimal winning if all coalition vectors $v\preceq u$, $v\neq u$ ($v\prec u$ for short) are losing. Similarly, we 
  call a losing vector $u$ shift-maximal losing if all coalition vectors $v\succ u$ are winning.  
\end{definition} 

For $[7;3,3,3,1,1,1]$ the vector  $(2,1)$ is shift-minimal winning and $(3,0)$ is not shift-minimal winning, since one player of type~$1$ 
can be shifted to be of type $2$ without losing the property of being a winning vector. Complete simple games are uniquely 
characterized by their count vector $\tilde{n}=\left(\left|N_1\right|,\dots,\left|N_t\right|\right)$ and their
matrix $\tilde{M}$ of shift-minimal winning vectors. In our example we have $\tilde{n}=(3,3)$, 
$\tilde{M}=\begin{pmatrix}2&1\end{pmatrix}$. The corresponding matrix of shift-maximal losing vectors is given by 
$\tilde{L}=\begin{pmatrix}2&0\\1&3\end{pmatrix}$. By $\tilde{m}_1,\dots,\tilde{m}_r$ we denote the shift-minimal winning vectors, 
i.e., the rows of $\tilde{M}$.
The crucial characterization theorem for complete simple games, using vectors as coalitions and the partial order~$\preceq$,  
was given in \cite{complete_simple_games}:
\begin{theorem}
  \label{thm_characterization_cs}

  \vspace*{0mm}

  \noindent
  \begin{itemize}
   \item[(a)] Given a vector 
              $$\widetilde{n}=\left(n_1,\dots,n_t\right)\in\mathbb{N}_{>0}^t$$
              and a matrix
              $$\mathcal{M}=\begin{pmatrix}m_{1,1}&m_{1,2}&\dots&m_{1,t}\\m_{2,1}&m_{2,2}&\dots&m_{2,t}\\
              \vdots&\ddots&\ddots&\vdots\\m_{r,1}&m_{r,2}&\dots&m_{r,t}\end{pmatrix}=
              \begin{pmatrix}\widetilde{m}_1\\\widetilde{m}_2\\\vdots\\\widetilde{m}_r\end{pmatrix}$$
              satisfying the following properties:
              \begin{itemize}
               \item[(i)]   $0\le m_{i,j}\le n_j$, $m_{i,j}\in\mathbb{N}$ for $1\le i\le r$, $1\le j\le t$,
               \item[(ii)]  $\widetilde{m}_i\bowtie\widetilde{m}_j$ for all $1\le i<j\le r$,
               \item[(iii)] for each $1\le j<t$ there is at least one row-index $i$ such that
                            $m_{i,j}>0$, $m_{i,j+1}<n_{j+1}$ if $t>1$ and $m_{1,1}>0$ if $t=1$, and
               \item[(iv)]  $\widetilde{m}_i\gtrdot \widetilde{m}_{i+1}$ for $1\le i<r$, where $\gtrdot$ 
                            denotes the lexicographical order.
              \end{itemize}
              Then, there exists a complete simple game $(N,\chi)$ associated to $\left(\widetilde{n},\mathcal{M}\right)$.
   \item[(b)] Two complete simple games $\left(\widetilde{n}_1,\mathcal{M}_1\right)$ and $\left(\widetilde{n}_2,\mathcal{M}_2\right)$
              are isomorphic if and only if $\widetilde{n}_1=\widetilde{n}_2$ and $\mathcal{M}_1=\mathcal{M}_2$.
  \end{itemize}
\end{theorem}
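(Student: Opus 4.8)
The plan is to prove (a) by an explicit construction of a game from the data $(\widetilde{n},\mathcal{M})$, and (b) by showing that $\widetilde{n}$ and $\mathcal{M}$ are complete isomorphism invariants of that game. So the real work is to verify that conditions (i)--(iv) are exactly what is needed for the constructed object to be a complete simple game having count vector $\widetilde{n}$ and shift-minimal winning matrix $\mathcal{M}$.

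For (a): let $N$ be the disjoint union $N_1\cup\dots\cup N_t$ with $|N_j|=n_j$, and declare a coalition $S$ winning precisely when its coalition vector $c(S)=(|S\cap N_1|,\dots,|S\cap N_t|)$ satisfies $\widetilde{m}_i\preceq c(S)$ for some row $\widetilde{m}_i$ of $\mathcal{M}$. First I would check this is a simple game: if $S\subseteq T$ then $c(S)\le c(T)$ coordinatewise, hence $c(S)\preceq c(T)$, so $\mathcal{W}$ is upward closed; $c(N)=\widetilde{n}$ dominates every row by (i), so $N\in\mathcal{W}$; and $\emptyset\notin\mathcal{W}$ because no row is the zero vector — for $r\ge 2$ the zero vector lies $\preceq$-below every row, which (ii) forbids, and for $r=1$ property (iii) forces $m_{1,1}>0$.

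Next I would verify completeness together with the identification of the equivalence classes. The crucial monotonicity is: if $a<b$ and $c$ is a winning vector with $c_b\ge 1$ and $c_a<n_a$, then the vector $c'$ obtained from $c$ by decreasing coordinate $b$ by one and increasing coordinate $a$ by one satisfies $c\preceq c'$ (all partial sums agree except those whose index $i$ satisfies $a\le i<b$, which increase by one), hence $c'$ is winning too. So players within an $N_j$ are interchangeable and any player of $N_a$ dominates any player of $N_b$ for $a<b$; thus every two players are $\sqsupset$-comparable and $\sqsupset$ is a total preorder whose equivalence classes are unions of consecutive $N_j$'s. That no two consecutive ones merge — so the classes are exactly $N_1,\dots,N_t$ — is where (iii) enters: for $a<t$ pick a row $\widetilde{m}_i$ with $m_{i,a}>0$ and $m_{i,a+1}<n_{a+1}$, and let $c'$ be $\widetilde{m}_i$ with one unit moved from coordinate $a$ to coordinate $a+1$; then $c'\prec\widetilde{m}_i$, and $c'$ is losing, since $\widetilde{m}_\ell\preceq c'\prec\widetilde{m}_i$ would give $\widetilde{m}_\ell\preceq\widetilde{m}_i$, impossible for $\ell\ne i$ by (ii) and absurd for $\ell=i$. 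Hence a coalition with vector $\widetilde{m}_i$ that contains a chosen $N_a$-player and misses a chosen $N_{a+1}$-player is winning, whereas swapping those two players makes it losing, proving strict desirability between $N_a$ and $N_{a+1}$. The same losing-ness argument applied to an arbitrary $c''\prec\widetilde{m}_i$ shows each row of $\mathcal{M}$ is shift-minimal winning; conversely a shift-minimal winning vector $c$ dominates some $\widetilde{m}_i$, and $c\ne\widetilde{m}_i$ would make $\widetilde{m}_i\prec c$ a winning vector, contradicting shift-minimality — so the shift-minimal winning vectors are exactly the rows of $\mathcal{M}$, and (iv) pins down their order. Thus $(N,\chi)$ is associated to $(\widetilde{n},\mathcal{M})$.

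For (b): an isomorphism of complete simple games carries the desirability preorder to the desirability preorder, hence the $j$-th equivalence class of one game to the $j$-th of the other, so $\widetilde{n}_1=\widetilde{n}_2$; it then matches coalition vectors, hence winning vectors, hence shift-minimal winning vectors, so $\mathcal{M}_1=\mathcal{M}_2$ once they are written in the canonical order of (iv). Conversely, if the parameters coincide, any bijection $N^{(1)}\to N^{(2)}$ mapping each $N_j^{(1)}$ onto $N_j^{(2)}$ preserves coalition vectors and hence winningness, i.e.\ is an isomorphism. The main obstacle is exactly the point that condition (iii) is what prevents consecutive equivalence classes from collapsing (and, in the degenerate single-row case, what keeps $\emptyset$ losing); everything else is routine monotonicity and bookkeeping with the partial order $\preceq$.
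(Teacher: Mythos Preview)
The paper does not prove this theorem; it is quoted from Carreras and Freixas (1996) and merely cited, so there is no in-paper argument to compare against. Your write-up is a correct, self-contained proof along the standard lines: you build the game from $(\widetilde{n},\mathcal{M})$ via the $\preceq$-upset of the rows, verify the simple-game axioms, use the coordinate-shift monotonicity to get a total desirability preorder, invoke condition (iii) to separate consecutive classes and to show each row is shift-minimal, and use (ii) to rule out any extra shift-minimal winning vectors; part (b) is then immediate bookkeeping. The only places that deserve a second glance are exactly the ones you flag: that (iii) both keeps $\emptyset$ losing in the single-row case and prevents adjacent classes from merging, and that $\widetilde{m}_\ell\preceq c'\prec\widetilde{m}_i$ forces $\widetilde{m}_\ell\preceq\widetilde{m}_i$, which (ii) excludes. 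All of this is sound, so your proof stands on its own and is in fact what the cited reference does.
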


Shift-minimal winning coalitions are coalitions whose coalition vector is shift-minimal winning. For shift-minimal winning
coalitions an analogue lemma like Lemma~\ref{lemma_nakamura_mwc} for minimal winning coalitions does not exist in general. 
As an example consider the complete simple game uniquely characterized by $\tilde{n}=(5,5)$ and $\tilde{M}=\begin{pmatrix}2&3\end{pmatrix}$. 
Here we need three copies of the coalition vector $(2,3)$ since $2\cdot(\tilde{n}-(2,3))=(6,4)\not\ge (5,5)=\tilde{n}$ but  
$3\cdot(\tilde{n}-(2,3))\ge \tilde{n}$. On the other hand the Nakamura number is indeed $2$, as one can choose the two 
minimal winning vectors $(2,3)$ and $(3,2)$, where the latter is a shifted version of $(2,3)$.

As further notation, we write $v=\sum(u)\in\mathbb{N}_{\ge 0}^t$ for $v_i=\sum_{j=1}^i u_j$ for all $1\le i\le t$, 
where $u\in\mathbb{N}_{\ge 0}^t$. Let $v\in\mathbb{N}_{\ge 0}^t$ be a minimal winning vector of a complete simple game $\sgw$. 
Directly from definition we conclude that if $v\preceq u$, then $u$ is also a winning vector and $\sum(v)\le \sum(u)$.

\begin{lemma}
  \label{lemma_ilp_vector_csg}
  For each complete simple game, uniquely characterized by $\tilde{n}$ and $\tilde{M}$, without vetoers and equi\-va\-lence 
  classes $N_1,\dots, N_t$ the corresponding Nakamura number $\nu\sgw$ 
  is given as the optimal target value of 
  \begin{eqnarray*}
    \min\sum_{i=1}^r x_i\\
    \sum_{i=1}^r \left(o_j-p_j^i\right)\cdot x_i&\ge& o_j \quad\forall 1\le j\le t\\    
    x_i&\in&\mathbb{Z}_{\ge 0}\quad \forall 1\le i\le r,
  \end{eqnarray*}
  where $o:=\left(o_1,\dots,o_t\right)=\sum(\tilde{n})$, $p^i:=\left(p_1^i,\dots,p_t^i\right)=\sum(\tilde{m}_i)$, and $n_j=\left|N_j\right|$.
\end{lemma}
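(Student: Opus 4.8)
The plan is to mimic the proof of Lemma~\ref{lemma_ilp_vector_sg}, but working with the partial sums $\sum(\cdot)$ in place of the coalition vectors themselves, so that the shift-structure of a complete simple game is respected. The key point is that for a complete simple game a coalition $S$ is winning if and only if its coalition vector $c(S)$ dominates some shift-minimal winning vector $\tilde m_i$ in the order $\preceq$, and $c(S)\preceq c(S')$ is equivalent to $\sum(c(S))\le\sum(c(S'))$ componentwise. Thus "being winning" is captured entirely by the partial-sum vector $\sum(c(S))$ lying componentwise above one of the $p^i=\sum(\tilde m_i)$, and "the intersection over a family is empty" should be re-encoded as a condition on how often each equivalence class can be fully retained.

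For the first direction I would take a minimal collection $S_1,\dots,S_r$ of \emph{minimal} winning coalitions with empty intersection (Lemma~\ref{lemma_nakamura_mwc} lets us pass to minimal winning coalitions). Each $S_k$ is, being winning, componentwise $\preceq$-above some shift-minimal winning vector; equivalently, writing $c^k=c(S_k)$, we have $\sum(c^k)\succeq p^{i(k)}$ (componentwise $\ge$) for some $i(k)$. Set $x_i$ to be the number of indices $k$ with $i(k)=i$ (padding: if several shift-minimal vectors work, pick any one). Then $\sum_i x_i=r$. Emptiness of $\bigcap S_k$ means every player is dropped from some $S_k$; counting, class by class, the players dropped and using that player $j$-type members missing from $S_k$ number $n_j-|S_k\cap N_j|=n_j-c^k_j$, one gets $\sum_k (n_j - c^k_j)\ge n_j$. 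Taking partial sums over $j$ and using $\sum(c^k)\succeq p^{i(k)}$, this yields $\sum_k (o_j - p^{i(k)}_j)\ge \sum_k(o_j - (\sum(c^k))_j)\ge o_j$, i.e. $\sum_i (o_j-p^i_j)x_i\ge o_j$ for all $j$. So the ILP has a feasible point of value $r$, hence its optimum is $\le \nu\sgw$.

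For the converse, given an optimal $x=(x_1,\dots,x_r)$ I would build $R=\sum_i x_i$ winning coalitions with empty intersection. Take $x_i$ copies of the index $i$, giving a multiset of shift-minimal winning vectors $\tilde m^{(1)},\dots,\tilde m^{(R)}$ with partial-sum vectors $p^{(1)},\dots,p^{(R)}$. Process each equivalence class $N_j$ separately as in Lemma~\ref{lemma_ilp_vector_sg}: start with $R_0=N_j$ and, in round $k$, remove $\min(|R_{k-1}|,\,n_j - c^{(k)}_j)$ players, where $c^{(k)}$ is a winning coalition-vector componentwise $\succeq \tilde m^{(k)}$ that I construct greedily (keeping just enough players in each class, scanning classes in order, so that the running partial sums stay $\ge p^{(k)}$). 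The ILP constraints $\sum_k (o_j - p^{(k)}_j)\ge o_j$, together with $o_j-p^{(k)}_j\ge$ the number of type-$\le j$ players one may drop, guarantee that summed over the rounds each class is exhausted, so the intersection is empty; and each resulting coalition is winning because its partial-sum vector dominates some $p^{(k)}$. Hence $\nu\sgw\le R$, and combined with the previous inequality the optimum equals $\nu\sgw$.

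The main obstacle is the greedy construction of the winning coalition vectors $c^{(k)}$ in the converse direction: unlike the symmetric-class case of Lemma~\ref{lemma_ilp_vector_sg}, here "winning" is a $\preceq$-domination condition coupling all classes through partial sums, so one must simultaneously decide, across all $t$ classes, how many players to drop in each round while (i) never dropping more type-$j$ players than remain, (ii) keeping each coalition's partial-sum vector above some $p^{(k)}$, and (iii) still draining every class over the course of the rounds. I expect this to work by processing classes left to right and always retaining the minimum number forced by the strongest remaining shift-minimal constraint, with the ILP inequalities providing exactly the slack needed; but verifying that these three requirements are mutually compatible is the technical heart of the argument. The no-vetoer hypothesis is used, as in Proposition~\ref{prop_observation}(c), to ensure each class can indeed be fully dropped at least once, so the construction does not stall.
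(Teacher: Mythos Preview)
Your first direction is correct and essentially coincides with the paper's argument (the paper routes it through Lemma~\ref{lemma_ilp_vector_sg} rather than starting from coalitions directly, but the content is the same: map each minimal winning vector down to a shift-minimal one via $\preceq$, and use that $\sum(\cdot)$ is monotone for $\preceq$).

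The converse direction, however, is where your proposal has a genuine gap, and you say so yourself: the greedy construction of the $c^{(k)}$ is left as a hope rather than an argument. The interaction you worry about is real---deciding how many class-$j$ players to keep in round $k$ couples, through the partial sums, with all later classes \emph{and}, through the ``drain every class'' requirement, with all other rounds---and your left-to-right greedy rule is neither precisely specified nor shown to satisfy all three constraints simultaneously.

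The paper avoids this difficulty by a different, cleaner mechanism: it reduces to Lemma~\ref{lemma_ilp_vector_sg} via a \emph{shifting} argument. Start with $R=\sum_i x_i$ vectors, namely $x_i$ copies of each $\tilde m_i$; these already satisfy the partial-sum constraints $\sum_k\bigl(o_j-(\sum(v^k))_j\bigr)\ge o_j$. If the coordinate-wise constraint of Lemma~\ref{lemma_ilp_vector_sg} fails at some class $h$, then summing shows the partial-sum constraint at $h-1$ holds with strict slack; hence in some vector $v^{k'}$ one can move a single player from class $h$ to some class $\le h-1$, obtaining a new (still winning, indeed $\succeq$-larger) vector. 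Each such shift preserves all partial-sum constraints and strictly decreases one component of $\sum_k \sum(v^k)$, so the process terminates with a family of minimal winning vectors satisfying the constraints of Lemma~\ref{lemma_ilp_vector_sg}, whose construction then yields the required coalitions. This local, one-player-at-a-time redistribution is the missing idea in your proposal.
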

\begin{proof}
  Consider a list of minimal winning vectors $v^1,\dots,v^r$ corresponding to an optimal solution of the ILP of Lemma~\ref{lemma_ilp_vector_sg}. 
  We aim to construct a solution of the present ILP. To this end, consider an arbitrary mapping $\tau$ from the set of minimal 
  winning vectors into the set of shift-minimal winning vectors, such that $\tau(u)\preceq u$ for all minimal winning vectors $u$.
  We choose the $x_i$'s as the number of occurrences of $\tilde{m}_i=\tau(v^j)$ for all $1\le j\le j$. Thus, the $x_i$ are non-negative 
  numbers, which sum to the Nakamura number of the given complete game. Since $\tau(v^i)\preceq v^i$ we have 
  $\sum(\tau(v^i))\le \sum(v^i)$. Thus $\sum(\tilde{n})-\sum(\tau(v^i))\ge \sum(\tilde{n})-\sum(v^i)$,  
  so that all inequalities are satisfied.
  
  For the other direction let $x_i$ be a solution of the present ILP. Choosing $x_i$ copies of shift-minimal winning vector 
  $\tilde{m}_i$ we obtain a list of shift-minimal winning vectors $v^1_0,\dots,v^r_0$ satisfying
  $\sum_{i=1}^r\sum(\tilde{n})-\sum(v^i_0)\ge\sum(\tilde{n})$. Starting with $j=1$ we iterate: As long we do not have
  $\sum_{i=1}^r\tilde{n}-v^i_j\ge\tilde{n}$, we choose an index $1\le h\le t$, where the $h$th component of $\sum_{i=1}^r\tilde{n}-v^i_j$
  is smaller than $\tilde{n}_h$. Since $\sum_{i=1}^r\sum(\tilde{n})-\sum(v^i_j)\ge\sum(\tilde{n})$ we have $h\ge 2$ and the $(h-1)$th 
  component of $\sum_{i=1}^r\sum(\tilde{n})-\sum(v^i_j)$ is at least one larger than the $(h-1)$th component of $\sum(\tilde{n})$. 
  Thus, there exists a vector $v_j^{i'}$ where we can shift one player from class $h$ to a class with index lower or equal than $h-1$ to obtain a new minimal 
  winning vector $v_{j+1}^{i'}$. All other vectors remain unchanged. We can easily check, that the new list of minimal winning vectors 
  also satisfies $\sum_{i=1}^r\sum(\tilde{n})-\sum(v^i_{j+1})\ge\sum(\tilde{n})$. Since $\sum_{i=1}^r\sum(\tilde{n})-\sum(v^i_{j})$ 
  decreases one unit in a component in each iteration the process must terminate. Thus, finally we end up with 
  a list of minimal winning vectors satisfying $\sum_{i=1}^r\tilde{n}-v^i_j\ge\tilde{n}$.           
\end{proof}


In Figure~1 we have depicted the Hasse diagram of the shift-relation for coalition vectors for
$\tilde{n}=(1,2,1)$. If we consider the complete simple game with shift-minimal winning vectors $(1,0,1)$ and $(0,2,0)$, 
then for the minimal winning vector $(1,1,0)$ we have two possibilities for $\tau$.

As an example we consider the complete simple game uniquely characterized by $\tilde{n}=(10,10)$ and $
\tilde{M}=\begin{pmatrix}7&8\end{pmatrix}$. An optimal solution of the corresponding ILP is given by $x_1=4$.
I.e.\ initially we have $v_0^1=(7,8)$, $v_0^2=(7,8)$, $v_0^3=(7,8)$, and $v_0^4=(7,8)$. We have
$\sum_{i=1}^r\sum(\tilde{n})-\sum(v^i_0)=(12,20)\ge (10,20)=\sum(\tilde{n})$ and 
$\sum_{i=1}^r\tilde{n}-v^i_0=(12,8)\not\ge (10,10)=\tilde{n}$. Here the second component, with value $8$, is too small. 
Thus the first component must be at least $1$ too large, and indeed $12>10$. We can shift one player from class $2$ to class $1$.
We may choose $v_1^1=(8,7)$, $v_1^2=(7,8)$, $v_1^3=(7,8)$, and $v_1^4=(7,8)$, so that $\sum_{i=1}^r\sum(\tilde{n})-\sum(v^1_0)=(11,20)\ge (10,20)=\sum(\tilde{n})$ and 
$\sum_{i=1}^r\tilde{n}-v^i_0=(11,9)\not\ge (10,10)=\tilde{n}$. Finally we may shift one player in $v_1^1$ again or in any of the 
three other vectors to obtain $v_2'=v_2^2=(7,8)$ and $v_2^3=v_2^4=(8,7)$.  

\begin{figure}[htp]
  \begin{center}
    \label{fig_hasse_2_2}
    \caption{The Hasse diagram of the vectors with counting vector $(1,2,1)$.}
    \setlength{\unitlength}{1cm}
    \begin{picture}(5.5,8.8)
      \put(1.35,0){$\begin{pmatrix}0&0&0\end{pmatrix}$}
      \put(1.35,1){$\begin{pmatrix}0&0&1\end{pmatrix}$}
      \put(1.35,2){$\begin{pmatrix}0&1&0\end{pmatrix}$}
      \put(0.6,3){$\begin{pmatrix}1&0&0\end{pmatrix}$}
      \put(2.25,3){$\begin{pmatrix}0&1&1\end{pmatrix}$}
      \put(1.35,4){$\begin{pmatrix}1&0&1\end{pmatrix}$}
      \put(3.0,4){$\begin{pmatrix}0&2&0\end{pmatrix}$}
      \put(2.25,5){$\begin{pmatrix}1&1&0\end{pmatrix}$}
      \put(3.9,5){$\begin{pmatrix}0&2&1\end{pmatrix}$}
      \put(3.075,6){$\begin{pmatrix}1&1&1\end{pmatrix}$}
      \put(3.075,7){$\begin{pmatrix}1&2&0\end{pmatrix}$}
      \put(3.075,8){$\begin{pmatrix}1&2&1\end{pmatrix}$}
      \put(2.13,0.4){\vector(0,1){0.45}} 
      \put(2.13,1.4){\vector(0,1){0.45}} 
      \put(2.1,2.4){\vector(-3,2){0.65}} 
      \put(2.3,2.4){\vector(3,2){0.65}} 
      \put(1.33,3.4){\vector(3,2){0.65}}
      \put(2.93,3.4){\vector(-3,2){0.65}}
      \put(3.03,3.4){\vector(3,2){0.65}}
      \put(2.23,4.4){\vector(3,2){0.65}}
      \put(3.83,4.4){\vector(-3,2){0.65}}
      \put(3.93,4.4){\vector(3,2){0.65}}
      \put(3.03,5.4){\vector(3,2){0.65}}
      \put(4.63,5.4){\vector(-3,2){0.65}}
      \put(3.85,6.4){\vector(0,1){0.45}}
      \put(3.85,7.4){\vector(0,1){0.45}}
    \end{picture}
  \end{center}
\end{figure}
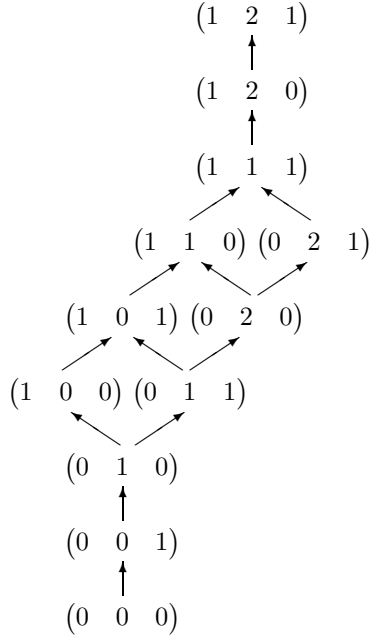

Note that a complete simple game $\sgw$ uniquely characterized by its count vector $\tilde{n}$ and its 
  matrix $\tilde{M}=(\tilde{m}^1,\dots,\tilde{m}^r)^T$ contains vetoers if and only if $\tilde{m}^i_1=\tilde{n}_1$ 
  for all $1\le i\le r$.
The next lemma concerns complete simple games with minimum, i.e., with a unique 
minimal winning vector in $\tilde{M}$.
\begin{proposition}
  \label{prop_exact_csg_r_1}
  The Nakamura number of a complete simple game without vetoers, uniquely characterized by $\tilde{n}=(n_1,\dots,n_t)$ and 
  $\tilde{M}=\begin{pmatrix}m_{1}^1&\dots&m_t^1\end{pmatrix}$, is given by 
  $$
    \max_{1\le i\le t} \left\lceil\frac{\sum_{j=1}^i n_j}{\sum_{j=1}^i n_j-m_j^1}\right\rceil
    \le \max_{1\le i\le t} \left\lceil\frac{\sum_{j=1}^i n_j}{i}\right\rceil
    \le \max(2,n-2t+3).
  $$
\end{proposition}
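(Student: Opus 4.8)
The plan is to specialize the integer linear program of Lemma~\ref{lemma_ilp_vector_csg} to the case $r=1$ and read off the optimum explicitly. Write $o_i=\sum_{j=1}^i n_j$ and $p_i=\sum_{j=1}^i m_j^1$, so the quantity appearing in the statement is $o_i/(o_i-p_i)$. With a single shift-minimal winning vector the ILP has one variable $x_1\in\mathbb{Z}_{\ge 0}$ and the constraints $(o_j-p_j)\,x_1\ge o_j$ for $1\le j\le t$. By property~(i) of Theorem~\ref{thm_characterization_cs} we have $m_j^1\le n_j$, so $o_j-p_j=\sum_{l=1}^j (n_l-m_l^1)$ is non-decreasing in $j$; since the game has no vetoers, $o_1-p_1=n_1-m_1^1\ge 1$, and hence $o_j-p_j\ge 1$ for all $j$. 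Thus each constraint is equivalent to $x_1\ge\lceil o_j/(o_j-p_j)\rceil$, the minimal feasible value of $x_1$ is $\max_{1\le i\le t}\lceil o_i/(o_i-p_i)\rceil$, and by Lemma~\ref{lemma_ilp_vector_csg} this is $\nu\sgw$. This proves the asserted equality.

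For the first inequality I would invoke property~(iii) of Theorem~\ref{thm_characterization_cs}: with $r=1$ it forces $m_j^1>0$ and $m_{j+1}^1<n_{j+1}$ for every $1\le j<t$ (and $m_1^1>0$ if $t=1$). Combined with $m_1^1<n_1$ from the absence of vetoers, this yields $n_j-m_j^1\ge 1$ for all $1\le j\le t$, so $o_i-p_i=\sum_{j=1}^i(n_j-m_j^1)\ge i$, hence $o_i/(o_i-p_i)\le o_i/i$ and, taking ceilings and the maximum over $i$, $\nu\sgw\le\max_{1\le i\le t}\lceil o_i/i\rceil$.

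For the last inequality I would convert the same facts into bounds on the class sizes: from $1\le m_j^1\le n_j-1$ for $j<t$ we get $n_j\ge 2$ for $1\le j\le t-1$, while $n_t\ge 1$, so $n=\sum_j n_j\ge 2(t-1)+1=2t-1$, i.e.\ $n-2t+1\ge 0$. For $i\le t-1$ the classes $i+1,\dots,t$ contribute $n-o_i\ge 2(t-1-i)+1$, hence $o_i\le n-2t+2i+1$ and $\lceil o_i/i\rceil\le 2+\lceil (n-2t+1)/i\rceil\le 2+\max(0,n-2t+1)=\max(2,n-2t+3)$. For $i=t$ one checks directly that $\lceil n/t\rceil\le\max(2,n-2t+3)$: if $n=2t-1$ both sides equal $2$, and if $n\ge 2t$ then $n/t\le n-2t+3$ for $t\ge 1$ while the right-hand side is an integer, so $\lceil n/t\rceil\le n-2t+3$. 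Taking the maximum over all $i$ finishes the bound.

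The only genuinely computational part is the last step; the care needed there is to extract $n_j\ge 2$ (for $j<t$) from property~(iii) of the characterization theorem and to treat the index $i=t$, where no further classes remain to absorb the missing weight, separately from $i<t$. The first two parts are essentially a direct read-off of the ILP in Lemma~\ref{lemma_ilp_vector_csg} together with properties~(i) and~(iii) of Theorem~\ref{thm_characterization_cs}.
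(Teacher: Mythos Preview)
Your proof is correct and follows the same route as the paper: specialize the ILP of Lemma~\ref{lemma_ilp_vector_csg} to a single variable, then use property~(iii) of Theorem~\ref{thm_characterization_cs} together with the absence of vetoers to obtain $n_j-m_j^1\ge 1$ and hence the first inequality. The only difference is in the final step: the paper observes that shifting a player from $N_i$ to $N_{i-1}$ can only increase $\max_{i}\lceil o_i/i\rceil$, so the maximum over admissible class-size vectors is attained at $n_t=1$, $n_i=2$ for $2\le i\le t-1$, whereas you bound each term $\lceil o_i/i\rceil$ directly using $n-o_i\ge 2(t-1-i)+1$; both arguments are short and yield the same bound.
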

\begin{proof}
  We utilize the ILP in Lemma~\ref{lemma_ilp_vector_csg}. In our situation it has only one variable $x_1$. 
  The minimal integer satisfying the inequality number $i$ is given by $\left\lceil\frac{\sum_{j=1}^i n_j}{\sum_{j=1}^i n_j-m_j^1}\right\rceil$.
  
  Next we consider the first upper bound just involving the cardinalities of the equivalence classes. 
  Since the complete simple game has no vetoers we have $m_1^1\le n_1-1$. Due to the type conditions 
  in the parameterization theorem of complete simple games, we have $1\le m_j^1\le n_j-1$ and $n_j\ge 2$ for all
  $2\le j\le t-1$. If $t\ge 2$ then we additionally have $0\le m_t^1\le n_t-1$ and $n_t\ge 1$. Thus we 
  have $\sum_{j=1}^i n_j-m_j^1\ge i$ and conclude the proposed upper bound.
  
  By shifting one player from $N_i$ to $N_{i-1}$ the upper bound 
  $\max_{1\le i\le t} \left\lceil\frac{\sum_{j=1}^i n_j}{i}\right\rceil$
  does not decrease. Thus the minimum is attained at $n_t=1$, and $n_i=2$ for all $2\le i\le t-1$, which gives the second upper 
  bound only depending on the number of players and equivalence classes.
\end{proof}

\begin{corollary}
  \label{cor_all_but_one_everywhere}
  Let $\sgw$ be a complete simple game with $t$ types of players. If $(n_1-1,\dots,n_t-1)$ is a winning vector,
  then we have 
  $$
    \nu\sgw\le \max_{1\le i\le t} \left\lceil\frac{\sum_{j=1}^i n_j}{i}\right\rceil \le n-t+1.
  $$
\end{corollary}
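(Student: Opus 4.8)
The plan is to exhibit explicitly a family of $r:=\max_{1\le i\le t}\left\lceil\frac{\sum_{j=1}^i n_j}{i}\right\rceil$ winning coalitions of $\sgw$ with empty intersection; by the definition of the Nakamura number this immediately yields $\nu\sgw\le r$, the first claimed bound. Setting $\sigma_i=\sum_{j=1}^i n_j$ (so $\sigma_t=n$), the second bound $r\le n-t+1$ then follows from the elementary estimate $\sigma_i\le n-(t-i)$, which holds because every equivalence class is non-empty. As a sanity check, $r$ is exactly the optimum produced by Proposition~\ref{prop_exact_csg_r_1} for the complete simple game with count vector $\tilde n$ and single shift-minimal winning vector $(n_1-1,\dots,n_t-1)$; carrying out the argument directly has the advantage of not requiring the type conditions of Theorem~\ref{thm_characterization_cs} for that auxiliary game, which may fail when some $n_j$ equals $1$.

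First I would recast the hypothesis combinatorially. Since $(n_1-1,\dots,n_t-1)$ is a winning vector and $\sgw$ is complete, every coalition $S$ whose coalition vector $u$ satisfies $(n_1-1,\dots,n_t-1)\preceq u$ is winning (this follows from the monotonicity of winning vectors under $\preceq$ recorded just before Lemma~\ref{lemma_ilp_vector_csg}, applied to some minimal winning vector $\preceq(n_1-1,\dots,n_t-1)$). The relation $(n_1-1,\dots,n_t-1)\preceq u$ is equivalent to saying that the complement $D=N\setminus S$ obeys the prefix condition $\sum_{j=1}^i\left|D\cap N_j\right|\le i$ for all $1\le i\le t$. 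Hence it suffices to produce subsets $D_1,\dots,D_r\subseteq N$, each satisfying this prefix condition, with $\bigcup_{k=1}^r D_k=N$; then $S_k:=N\setminus D_k$ are winning coalitions, and $\bigcap_{k=1}^r S_k=\emptyset$ because every player lies in some $D_k$.

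Such a family is obtained by a round-robin assignment. Arrange the $n$ players in a sequence that first lists the players of $N_1$, then those of $N_2$, and so on, and place the player in position $p$ into $D_k$ with $k\equiv p\pmod r$. Then $\bigcup_{k=1}^r D_k=N$ trivially. For a fixed $i$, the players of $N_1\cup\dots\cup N_i$ occupy exactly the first $\sigma_i$ positions of the sequence, and a round-robin distribution of $\sigma_i$ consecutive items among $r$ bins places at most $\left\lceil\sigma_i/r\right\rceil$ of them into any single bin; since $r\ge\left\lceil\sigma_i/i\right\rceil\ge\sigma_i/i$ we have $\sigma_i/r\le i$, hence $\left\lceil\sigma_i/r\right\rceil\le i$ as $i$ is an integer. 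This is precisely the required prefix condition, so every $S_k$ is winning and the first inequality is proved; the second is the estimate from the first paragraph. The one genuinely substantive point is that a single round-robin split simultaneously meets the budget $i$ for all prefixes at once, and this is exactly where completeness (the order $\preceq$, as opposed to plain set inclusion) enters; the rest is bookkeeping with ceilings, and I do not anticipate a real obstacle beyond keeping indices straight.
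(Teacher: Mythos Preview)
Your argument is correct. The round--robin construction does what you claim: for each prefix $N_1\cup\dots\cup N_i$ you bound the number of its elements landing in a single bin by $\lceil\sigma_i/r\rceil\le i$, so the complement vector of every $S_k$ dominates $(n_1-1,\dots,n_t-1)$ in the $\preceq$ order and hence $S_k$ is winning by completeness. The second inequality is handled exactly as in the paper.

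The paper proceeds differently for the first inequality: it invokes the ILP formulation of Lemma~\ref{lemma_ilp_vector_csg} and argues ``as in the proof of Proposition~\ref{prop_exact_csg_r_1}'', i.e.\ it feeds a single shift--minimal vector $\tilde m\preceq(n_1-1,\dots,n_t-1)$ into the ILP and reads off the optimum $\max_i\lceil\sigma_i/i\rceil$. The existence of actual coalitions with empty intersection is then inherited from the constructive half of Lemma~\ref{lemma_ilp_vector_csg}, whose per--class greedy removal specializes here to dropping one player from each class per round. Your round--robin is a more explicit, self--contained variant of the same idea: instead of working class by class you sweep through all players at once, letting the $\preceq$--monotonicity absorb the fact that the resulting coalition vectors are not exactly $(n_1-1,\dots,n_t-1)$ but merely dominate it. The benefit of your route, as you already note, is that it sidesteps any appeal to Proposition~\ref{prop_exact_csg_r_1} for an auxiliary game whose type conditions (Theorem~\ref{thm_characterization_cs}(iii)) may fail when some $n_j=1$; the paper's phrasing ``proceeding as in the proof'' implicitly avoids this too, but your version makes the independence from that machinery transparent.
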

\begin{proof}
  Proceeding as in the proof of Proposition~\ref{prop_exact_csg_r_1} yields the first bound. The second bound follows from 
  $$
    \frac{n_1+\dots+n_i}{i}\le \frac{n-t+i}{i}=\frac{n-t}{i}+1\le n-t+1.
  $$
\end{proof}

Using 
Proposition~\ref{prop_exact_csg_r_1} as a heuristic, i.e., using just a single shift-minimal winning vector, we obtain:

\begin{proposition}
  The Nakamura number of a complete simple game uniquely characterized by $\tilde{n}=(n_1,\dots,n_t)$ and 
  $\tilde{M}=(m^1,\dots,m^r)^T$, where $m^i=(m^i_1,\dots,m^i_t)$, is upper bounded by
  $$
    \max_{1\le i\le t} \left\lceil\frac{\sum_{j=1}^i n_j}{\sum_{j=1}^i n_j-m_j^i}\right\rceil
  $$
  for all $1\le i\le r$.
\end{proposition}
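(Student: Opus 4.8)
The plan is to apply the integer linear program of Lemma~\ref{lemma_ilp_vector_csg} and to exhibit an explicit feasible solution that uses only the single shift-minimal winning vector $\tilde m_k$, where $k$ is the fixed row index over which the statement is asserted (the index clash with the $i$ in the $\max$ is only notational; the denominator there is the partial sum $\sum_{j=1}^i\bigl(n_j-m_j^k\bigr)$). First I would set $o=(o_1,\dots,o_t)=\sum(\tilde n)$ and $p^k=(p_1^k,\dots,p_t^k)=\sum(\tilde m_k)$, so that $o_i=\sum_{j=1}^i n_j$ and $p_i^k=\sum_{j=1}^i m_j^k$, and the right-hand side of the proposition is $B:=\max_{1\le i\le t}\bigl\lceil o_i/(o_i-p_i^k)\bigr\rceil$ with the usual convention $a/0=\infty$.

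Next I would split into two cases. If $B=\infty$, then $o_i-p_i^k=0$ for some $i$, and $\nu\sgw\le\infty=B$ is trivial; note this case automatically covers games with a vetoer, since the set of vetoers is upward closed under $\sqsupseteq$ and hence is a union $N_1\cup\dots\cup N_s$, forcing $m_j^k=n_j$ for all $j\le s$ (so $o_s-p_s^k=0$). Otherwise $B<\infty$, hence $o_i-p_i^k>0$ for every $i$; in particular $\sgw$ has no vetoers, so Lemma~\ref{lemma_ilp_vector_csg} applies. I would then take the assignment $x_k=B$ and $x_l=0$ for $l\ne k$: its target value is $B$, and for each $1\le j\le t$ the $j$-th constraint $(o_j-p_j^k)\cdot B\ge o_j$ holds because $B\ge\bigl\lceil o_j/(o_j-p_j^k)\bigr\rceil\ge o_j/(o_j-p_j^k)$. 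Thus this is a feasible solution, so the ILP optimum — which equals $\nu\sgw$ by Lemma~\ref{lemma_ilp_vector_csg} — is at most $B$, which is exactly the claimed bound.

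The only real obstacle is bookkeeping: recognizing that the denominators in the statement are precisely the coefficients $o_i-p_i^k=\sum_{j=1}^i(n_j-m_j^k)$ appearing in the ILP constraints, and isolating the degenerate sub-case of a vanishing partial sum (equivalently, a vetoer), where the bound is $\infty$ and nothing has to be proved. As an alternative one could avoid the ILP and use Lemma~\ref{lemma_intersection_union}: the complete game whose winning coalitions are exactly those whose vector shift-dominates $\tilde m_k$ has a subset of the winning coalitions of $\sgw$, hence a Nakamura number at least $\nu\sgw$, and Proposition~\ref{prop_exact_csg_r_1} evaluates it; but then one must be mildly careful that discarding the other rows may merge consecutive equivalence classes, i.e.\ that $\tilde m_k$ alone need not satisfy condition~(iii) of Theorem~\ref{thm_characterization_cs}, and the ILP route sidesteps this complication entirely.
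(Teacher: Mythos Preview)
Your argument is correct and matches the paper's own (very terse) justification: the paper simply says ``using Proposition~\ref{prop_exact_csg_r_1} as a heuristic, i.e., using just a single shift-minimal winning vector,'' which amounts precisely to your feasible solution $x_k=B$, $x_l=0$ for $l\ne k$ in the ILP of Lemma~\ref{lemma_ilp_vector_csg}. Your explicit treatment of the degenerate case $B=\infty$ (and the observation that it absorbs all games with a vetoer, so that Lemma~\ref{lemma_ilp_vector_csg} legitimately applies whenever $B<\infty$) is a detail the paper leaves implicit.
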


\section{Maximum Nakamura numbers within subclasses of simple games}
\label{sec_extremal_nakamura_numbers}

In this section we consider the {\lq\lq}worst case{\rq\rq}, i.e., the maximum possible Nakamura number within a given 
class of games. Clearly, $\sgw=[n-1;1,\dots,1]$ attaines the maximum $\nu\sgw=n$ in the class of simple or weighted games with 
$n\ge 1$ players. However, all players of this example are equivalent, which is rather untypical for a simple game. Thus, 
it is quite natural to ask for the maximum possible Nakamura number if the number of players and the number of equivalence classes 
is given.

By $\mathcal{S}$ we denote the set of simple games, by $\mathcal{C}$ we denote the set of complete simple games, and by 
$\mathcal{T}$ we denote the set of weighted games.

\begin{definition}
  $\operatorname{Nak}^{\mathcal{X}}(n,t)$ is the maximum Nakamura number of a game without vetoers with $n\ge 2$ players 
  and $t\le n$ equivalence classes in $\mathcal{X}$, where $\mathcal{X}\in\{\mathcal{S},\mathcal{C},\mathcal{T}\}$.
\end{definition}

Clearly, we have
$$
  2\le \operatorname{Nak}^{\mathcal{T}}(n,t)\le\operatorname{Nak}^{\mathcal{C}}(n,t)\le \operatorname{Nak}^{\mathcal{S}}(n,t)\le n, 
$$
if the corresponding set of games is non-empty. Before giving exact formulas for small $t$, we characterize all simple games 
with $\nu\sgw\ge n-1$:
\begin{lemma}
  \label{lemma_n_minus_one_char}
  Let $\sgw$ be a simple game. If $\nu\sgw=n$, then $\sgw=[n-1;1,\dots,1]$ and $n\ge 2$. If $\nu\sgw=n-1$, then $\sgw$ is of one 
  of the following types:
  \begin{enumerate}
    \item[(1)] $\sgw=[2n-4;2^{n-2},1^2]$, $t=2$, for all $n\ge 3$;
    \item[(2)] $\sgw=[1;1^3]$, $t=1$, for $n=3$;
    \item[(3)] $\sgw=[2n-5;2^{n-3},1^3]$, $t=2$, for all $n\ge 4$;
    \item[(4)] $\sgw=[n-1;1^{n-1},0]$, $t=2$, for all $n\ge 3$;
    \item[(5)] $\sgw=[5n-2k-9;5^{n-k-1},3^k,1^1]$, $t=3$, for all $n\ge 4$ ($2\le k\le n-2$).
  \end{enumerate} 
\end{lemma}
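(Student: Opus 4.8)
The plan is to combine the upper bound $\nu\sgw\le n$ from Proposition~\ref{prop_observation}(c) with a case analysis driven by the number $t$ of equivalence classes, using the bound $\nu\sgw\le\left|\cap_{i=1}^k S_i\right|+k$ of Proposition~\ref{prop_observation}(f) to force the structure. First I would dispose of the extremal case $\nu\sgw=n$: if $\nu\sgw=n$, then for every single winning coalition $S$ we must have $|S|\ge n-1$, since otherwise $S$ together with the $|S^c|\le n-2$ coalitions $N\setminus\{j\}$, $j\notin S$, would give an empty intersection using fewer than $n$ coalitions (by Proposition~\ref{prop_observation}(f) with $k=1$, $\nu\sgw\le |S|+1\le n-1$). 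Hence every winning coalition has size $\ge n-1$; since $N\setminus\{i\}$ cannot be winning for two different $i$ (else those two coalitions intersect emptily — no wait, they intersect in $n-2$ players, but then (f) gives $\nu\le n-2+2=n$; the right argument is that $\nu\sgw=n$ forces that for each $i$ there is a winning coalition missing $i$, and that coalition then has size exactly $n-1$ and equals $N\setminus\{i\}$), so every $N\setminus\{i\}$ is winning and $[n-1;1,\dots,1]$ is the game.

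For the case $\nu\sgw=n-1$ the strategy is: again for every winning coalition $S$ we need $|S|+1\ge n-1$ by (f), so $|S|\ge n-2$, i.e. every minimal winning coalition has at least $n-2$ players, equivalently every losing coalition has at most $2$ players. So the game is determined by which singletons and which pairs are losing. I would then stratify by $t\in\{1,2,3\}$ (note $t\le 3$ is forced: with every losing set of size $\le 2$, many small coalitions are winning, and one checks $t\ge 4$ would drop the Nakamura number below $n-1$ via (f) applied to a carefully chosen winning coalition — this monotonicity-in-$t$ flavor is essentially Proposition~\ref{prop_exact_csg_r_1}/Corollary~\ref{cor_all_but_one_everywhere}). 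For $t=1$ we are in the symmetric case $[\hat q;1^n]$ and Equation~(\ref{eq_symmetric}) forces $\left\lceil n/(n-\hat q)\right\rceil=n-1$, which for $n\ge 4$ has no solution and for $n=3$ gives $\hat q=1$, i.e. type (2). For $t=2$, writing the game via its minimal winning coalition vectors and using Lemma~\ref{lemma_ilp_vector_sg}, the condition that losing sets have $\le 2$ elements pins the cardinalities of the two classes and the thresholds down to the families (1), (3), (4) (the class $N_2$ being either a two-element class of weight-$1$ players, a three-element such class, or a single null player). For $t=3$, the same analysis, now with three minimal winning vectors constrained by Theorem~\ref{thm_characterization_cs}(a)(iii), forces class sizes $(n-k-1,k,1)$ and the weights $5,3,1$ with quota $5n-2k-9$, giving family (5); one verifies directly from the ILP of Lemma~\ref{lemma_ilp_vector_csg} that each listed game has Nakamura number exactly $n-1$.

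The main obstacle I expect is the $t=2$ and $t=3$ bookkeeping: turning "every losing coalition has at most two players" into the exact list of admissible count vectors and shift-minimal winning matrices, and then checking that these — and only these — produce $\nu\sgw=n-1$ rather than $n$ or $n-2$. The clean way to organize this is to pass to the ILP formulation of Lemma~\ref{lemma_ilp_vector_csg}: the constraint $\sum_i (o_j-p_j^i)x_i\ge o_j$ with $o=\sum(\tilde n)$ makes the Nakamura number a small explicit function of the $n_j$ and the entries of $\tilde M$, so the equation $\nu\sgw=n-1$ becomes a finite Diophantine condition whose solutions are exactly the five families. Verifying that no game with $t\ge 4$ reaches $n-1$ is then the remaining check, handled by Corollary~\ref{cor_all_but_one_everywhere}: since every losing coalition has size $\le 2$, the vector $(n_1-1,\dots,n_t-1)$ is winning as soon as $t\ge 3$ and $n\ge t+2$... — more carefully, one uses that $\sum_{j\le i} n_j\ge 2i$ is impossible to violate enough to keep the maximum in that corollary as large as $n-1$ once $t\ge 4$, so the bound $n-t+1\le n-3<n-1$ rules those out.
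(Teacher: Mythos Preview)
Your approach diverges from the paper's and has two issues worth flagging.

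First, a slip: from ``every winning coalition has size $\ge n-2$'' you conclude ``every losing coalition has at most $2$ players''. That is not what follows; what follows is that the \emph{complement} of every winning coalition has at most $2$ players. Singletons and pairs are typically losing in these games; the game is determined by which $(n-1)$- and $(n-2)$-subsets are \emph{winning}, not by which small sets are losing. This is more than a verbal glitch, since the next sentence (``the game is determined by which singletons and which pairs are losing'') builds on the wrong reading.

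Second, and more substantively: your stratification by $t$ leans on Theorem~\ref{thm_characterization_cs}, Proposition~\ref{prop_exact_csg_r_1}, and Corollary~\ref{cor_all_but_one_everywhere}, all of which are stated for \emph{complete} simple games. The lemma, however, is about arbitrary simple games. You would first need to prove that any simple game with $\nu\sgw=n-1$ is complete (indeed weighted), which is essentially the content of the lemma itself. Without this, the $t\ge 4$ elimination via Corollary~\ref{cor_all_but_one_everywhere} and the $t=3$ classification via the $(\tilde n,\tilde M)$ parametrization are not justified.

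The paper's argument sidesteps both problems with a single combinatorial move. Once Proposition~\ref{prop_observation}(f) gives that every minimal winning coalition has size $n-1$ or $n-2$, encode the game as a graph on vertex set $N$, putting an edge $\{i,j\}$ exactly when $N\setminus\{i,j\}$ is winning. Applying Proposition~\ref{prop_observation}(f) with $k=2$ to two such winning coalitions forces any two edges to share a vertex, so the edge set is empty, a single edge, a triangle, or a star; together with the possible isolated vertices this yields precisely the five listed families. No completeness assumption, no ILP, and no Diophantine bookkeeping are needed.
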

\begin{proof}
  Let us start with the case $\nu\sgw=n$. Due to part (b) and (f) of Proposition~\ref{prop_observation} all minimal winning 
  coalitions have cardinality $n-1$. Part (e) gives that there are no null players, i.e., all players are contained in 
  some minimal winning coalition.

  Now let $\nu\sgw=n-1$. Again, due to part (b) and part (f) of Proposition~\ref{prop_observation} all minimal winning 
  coalitions have either cardinality $n-2$ or $n-1$. So, we can describe the game as a graph by taking $N$ as the set of vertices 
  and by taking edge $\{i,j\}$ if and only if $N\backslash\{i,j\}$ is a winning coalition. Again by using Proposition~\ref{prop_observation}.(f) 
  we conclude that each two edges need to have a vertex in common. Thus, our graph consists of isolated vertices and either a 
  triangle or a star. To be more precise, we consider the following cases:
  \begin{itemize}
    \item only isolated vertices, which gives $\nu\sgw=n$; 
    \item a single edge: this does not correspond to a simple game since the empty coalition has to be losing;
    \item a single edge and at least one isolated vertex: this is case (1);
    \item a triangle: this is case (2);
    \item a triangle and at least one isolated vertex: this is case (3);
    \item a star (with at least three vertices) and no isolated vertex: this is case (4);
    \item a star (with at least three vertices) and at least one isolated vertex: this is case (5).
  \end{itemize}    
\end{proof}

\begin{proposition}
  $\,$\\[-3mm]
  \begin{enumerate}
    \item[(a)] For $n\ge 2$ we have 
    $\operatorname{Nak}^{\mathcal{T}}(n,1)=\operatorname{Nak}^{\mathcal{C}}(n,1)=\operatorname{Nak}^{\mathcal{S}}(n,1)=n$.
    \item[(b)] For $n\ge 3$ we have 
    $\operatorname{Nak}^{\mathcal{T}}(n,2)=\operatorname{Nak}^{\mathcal{C}}(n,2)=\operatorname{Nak}^{\mathcal{S}}(n,2)=n-1$.
    \item[(c)] For $n\ge 4$ we have 
    $\operatorname{Nak}^{\mathcal{T}}(n,3)=\operatorname{Nak}^{\mathcal{C}}(n,3)=\operatorname{Nak}^{\mathcal{S}}(n,3)=n-1$.
    \item[(d)] For $n\ge 5$ we have 
    $\operatorname{Nak}^{\mathcal{T}}(n,4)=\operatorname{Nak}^{\mathcal{C}}(n,4)=\operatorname{Nak}^{\mathcal{S}}(n,4)=n-2$.
  \end{enumerate}
\end{proposition}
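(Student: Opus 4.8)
The plan is to establish each of (a)--(d) by matching an upper bound valid for \emph{all} simple games against an explicit \emph{weighted} witness realizing it; since $\mathcal{T}\subseteq\mathcal{C}\subseteq\mathcal{S}$ this forces $\operatorname{Nak}^{\mathcal{T}}$, $\operatorname{Nak}^{\mathcal{C}}$ and $\operatorname{Nak}^{\mathcal{S}}$ to agree. For (a), a game with one equivalence class is symmetric, hence of the form $[\hat{q};1,\dots,1]$, and the absence of vetoers forces $\hat{q}\le n-1$; by Equation~(\ref{eq_symmetric}) its Nakamura number is $\lceil n/(n-\hat{q})\rceil\le n$, with equality at $\hat{q}=n-1$, and $[n-1;1,\dots,1]$ is weighted. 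For the upper bounds in (b), (c), (d) I would appeal to Lemma~\ref{lemma_n_minus_one_char}: the only simple game with $\nu\sgw=n$ is $[n-1;1,\dots,1]$, which has $t=1$, and every simple game with $\nu\sgw=n-1$ is one of the games in cases (1)--(5) there, all of which have $t\le 3$. Hence $t\in\{2,3\}$ forces $\nu\sgw\le n-1$ and $t=4$ forces $\nu\sgw\le n-2$.

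Next I would produce the weighted witnesses and bound their Nakamura numbers from below by Theorem~\ref{thm_weighted}. For (b) take $[2n-4;2^{n-2},1^2]$ (case (1) of Lemma~\ref{lemma_n_minus_one_char}, $n\ge 3$); for (c) take $[5n-13;5^{n-3},3^2,1]$ (case (5) with $k=2$, $n\ge 4$); for (d) take the game of (c) on $n-1$ players and adjoin one further player of weight $0$, which makes sense once $n-1\ge 4$, i.e.\ $n\ge 5$. In the first two the slack $w(N)-q$ equals $2$ and $5$, and $\lceil w(N)/(w(N)-q)\rceil=n-1$; in the base game of the third the slack is again $5$ and the bound is $(n-1)-1=n-2$, while Proposition~\ref{prop_observation}(a) lets me drop the adjoined null player without changing $\nu\sgw$. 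So the three witnesses have $\nu\sgw\ge n-1$, $\ge n-1$, $\ge n-2$, and combining with the upper bounds above, equality holds in (a)--(d).

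Finally --- and this is where essentially all of the work lies --- I would verify that each witness has exactly the stated number of equivalence classes and no vetoers. To this end I would write down the minimal winning coalitions (a coalition is winning iff its complement has weight at most the slack): for case (5) these are the sets $N\setminus\{5_i\}$ and the sets $N\setminus\{3_j,1\}$. Then I would rule out the collapse of two weight classes by checking that no transposition of players from two distinct weight classes permutes the family of minimal winning coalitions --- for instance, swapping the weight-$1$ player with a weight-$3$ player sends $N\setminus\{3_2,1\}$ to $N\setminus\{3_1,3_2\}$, which has weight below $q$ and is not winning, and swapping a weight-$5$ player with a weight-$3$ player sends $N\setminus\{5_i\}$ to a winning but not minimal coalition. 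For (d) I would additionally observe that case (5) has no null player (the weight-$1$ player lies in $N\setminus\{5_i\}$), so the adjoined weight-$0$ player is null and therefore inequivalent to every original player, whence the number of equivalence classes jumps from $3$ to exactly $4$. Absence of vetoers is immediate in each case, since every player is missing from some winning coalition.
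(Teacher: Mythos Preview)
Your proposal is correct and follows essentially the same route as the paper: both derive the upper bounds in (b)--(d) directly from Lemma~\ref{lemma_n_minus_one_char} and then exhibit a weighted witness in each case, with (d) obtained from the (c)-witness by adjoining a null player. The only noteworthy differences are cosmetic: for (b) the paper uses the witness $[n-2;1^{n-1},0^1]$ (case~(4) of the lemma) rather than your $[2n-4;2^{n-2},1^2]$ (case~(1)), and for (c) the paper verifies $\nu=n-1$ by enumerating the minimal winning coalitions of $[5n-2k-9;5^{n-k-1},3^k,1]$ directly, whereas you invoke the lower bound of Theorem~\ref{thm_weighted}. Your use of Theorem~\ref{thm_weighted} is slightly slicker here, while the paper's direct argument has the side benefit of simultaneously confirming that the game genuinely has three equivalence classes.
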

\begin{proof}
  Due to Lemma~\ref{lemma_n_minus_one_char} it remains to give an example for each case.
  \begin{enumerate}
    \item[(a)] $[n-1;1^n]$
    \item[(b)] $[n-2;1^{n-1},0^1]$
    \item[(c)] Consider the example $[5n-2k-9;5^{n-k-1},3^k,1^1]$, where $k\ge 2$ and $n-k-1\ge 1$, i.e., $n\ge k+2$ and $n\ge 4$, with $n-k-1$ 
    players of weight $5$, $k$ players of weight $3$, and one player of weight $1$ -- this is indeed the minimum integer 
    representation, so that we really have $3$ types of players (this may also be checked directly). 
  
    Let $S$ be a minimal winning coalition. If a player of weight $5$ is missing in $S$, then all players of weight $3$ and the player 
    of weight $1$ belong to $S$. Thus, we need $n-k-1$ such versions in order to get an empty intersection of winning coalitions. If 
    a player of weight $3$ is missing, then all of the remaining players of weight $3$ 
    and all players of weight $5$ have to be present, so that we need $k$ such versions. Thus, the game has Nakamura number $n-1$ 
    for all $n\ge 4$ (if $k$ is chosen properly).
    \item[(d)] We append a null player to the stated example in part (c), which is possible for 
               $n\ge 5$ players.
  \end{enumerate}  
\end{proof}
We remark that each simple game $\sgw$ with $n=t\le 2$ players contains a vetoer, so that $\nu\sgw=\infty$, see 
Proposition~\ref{prop_observation}.(b). Note that there exists no simple game with $n\le 3$ players and $3$ types. Moreover, there exists no 
weighted game with $4$ types and $n\le 4$ players.

By computing the maximum possible Nakamura number for some small parameters $n$ and $t$, we have some evidence for:
\begin{conjecture}
  \label{conj_1}
  If $n$ is sufficiently large, then we have $n-t+1\le \operatorname{Nak}^{\mathcal{T}}(n,t)\le n-t+2$, where $t\in\mathbb{N}_{>0}$.
\end{conjecture}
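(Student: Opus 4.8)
\emph{Structure of the argument.} The conjectured interval has width two, so both the lower bound $\operatorname{Nak}^{\mathcal{T}}(n,t)\ge n-t+1$ and the upper bound $\operatorname{Nak}^{\mathcal{T}}(n,t)\le n-t+2$ must be proved; the upper bound is the substantial part, while the lower bound is essentially a construction. Note also that for $t\le 4$ the preceding proposition already settles the question, so the content is really $t\ge 5$, and the hypothesis ``$n$ sufficiently large'' is there because a weighted game with $t$ classes requires $n$ bounded below in terms of $t$ and the boundary behaviour for small $t$ shows the two-sided bound cannot be sharpened to a formula.

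\emph{Lower bound.} The plan is to generalise the family $[5n-2k-9;5^{n-k-1},3^k,1^1]$ from the proof of the preceding proposition, which already realises $\nu=n-1=n-t+2$ for $t=3$. One keeps one large block of equally heavy players together with a short ``tail'' of pairwise distinct small weights, and fixes the quota so that the slack $\lambda=w(N)-q$ equals the heaviest weight $w_{(1)}$. Choosing the heavy weights inside a narrow band, so that $2w_{(j)}>\lambda$ for every heavy class, forces every heavy player to be dropped from a winning coalition only on its own, while the tail weights are picked small enough that each tail player may be dropped together with one heavy player but no two tail players combine to weight $\le\lambda$; the tail must also be chosen so that all $t$ equivalence classes are genuinely distinct — this is exactly the role of the single weight‑$1$ player in the $t=3$ instance. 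Then in any family of winning coalitions with empty intersection each heavy player contributes a separate coalition, which pins $\nu\sgw$ down to essentially the number of heavy players, i.e.\ to $n-t+1$ or $n-t+2$; the exact value is read off from the ILP of Lemma~\ref{lemma_ilp_vector_csg}, just as in Example~\ref{ex_homogeneous}. For $n$ large relative to $t$ such weights can always be arranged, and one may in fact hope to reach $n-t+2$, in line with the known values at $t\in\{3,4\}$.

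\emph{Upper bound.} By Proposition~\ref{prop_observation}(f) applied with a single winning coalition $S$ (for a vetoer‑free game $N\setminus\{j\}$ is winning for each $j\in S$), we have $\nu\sgw\le|S|+1$, so it suffices to exhibit one winning coalition of size at most $n-t+1$, equivalently a winning coalition missing at least $t-1$ players. I would prove the structural fact: \emph{every weighted game without vetoers having $t$ equivalence classes has a winning coalition missing at least $t-1$ players.} Fix a weighted representation in which the $t$ classes carry pairwise distinct weights $w_{(1)}>w_{(2)}>\dots>w_{(t)}\ge 0$ and set $\lambda=w(N)-q$, so that a coalition is winning precisely when its complement has weight at most $\lambda$, and absence of vetoers gives $\lambda\ge w_{(1)}$. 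Picking one player from each of the $t-1$ lightest classes gives a set of $t-1$ players of total weight $w_{(2)}+\dots+w_{(t)}$, whose complement is the sought winning coalition provided $\lambda\ge w_{(2)}+\dots+w_{(t)}$. The plan is to establish this inequality by induction on $t$: if $\lambda$ were smaller, the ``room'' available for removing players would be too small for the two lightest classes to stay distinguishable — no droppable set of the required weight avoiding class $t-1$ but containing class $t$ could exist — so the game would in fact have at most $t-1$ classes, a contradiction. Combined with the reduction above this yields $\nu\sgw\le n-t+2$.

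\emph{Main obstacle.} The crux is precisely this structural fact: ruling out weighted games with $t$ classes in which every winning coalition misses fewer than $t-1$ players. The delicate point is that one cannot argue by monotonicity or by the type conditions of Theorem~\ref{thm_characterization_cs} alone — there are (even complete) simple games with $t$ classes all of whose minimal winning coalitions have size $n-1$ or $n-2$ that are not weighted, so the proof must exploit the linear rigidity of an actual weight vector through inequalities linking $\lambda$, the distinct weights, and the class sizes. Turning the heuristic ``too little slack $\Rightarrow$ too few classes'' into a clean induction on $t$ is, I expect, the bulk of the work; once it is in place the rest of the argument is the short reduction via Proposition~\ref{prop_observation}(f), and the lower bound is mainly careful bookkeeping with the ILP of Lemma~\ref{lemma_ilp_vector_csg}.
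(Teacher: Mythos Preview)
The statement you are attempting to prove is labelled \emph{Conjecture} in the paper, and the paper offers no proof of it whatsoever: immediately after stating it the authors write that they ``leave it as an open problem to determine $\operatorname{Nak}^{\mathcal{T}}(n,t)$, $\operatorname{Nak}^{\mathcal{C}}(n,t)$, and $\operatorname{Nak}^{\mathcal{S}}(n,t)$ for $t>4$'', and the only supporting evidence given is numerical computation for small parameters. So there is no ``paper's own proof'' to compare against; you have written a plan of attack for an open problem.

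On the substance of your plan: the reduction for the upper bound is correct --- Proposition~\ref{prop_observation}(f) with $k=1$ indeed gives $\nu\sgw\le m+1$ where $m$ is the minimum cardinality of a winning coalition, so the whole upper bound would follow from your structural claim that every vetoer-free weighted game with $t$ types has a winning coalition of size at most $n-t+1$. Note this claim is actually \emph{stronger} than the conjecture, since it would yield the upper bound for all $n$, not only large $n$. The inductive sketch you give for it, however, is not a proof: the assertion that $\lambda<w_{(2)}+\dots+w_{(t)}$ forces two of the lightest classes to merge is plausible heuristics (and my quick attempts to build counterexamples did collapse to fewer types, as you predict), but turning ``too little slack $\Rightarrow$ too few classes'' into a rigorous inequality chain is precisely the missing content, and nothing in the paper supplies it. Your lower-bound construction is in the right spirit --- it generalises the $t=3$ family the paper already uses --- but verifying that the resulting game genuinely has $t$ distinct equivalence classes (and not fewer) is again delicate, as the $[6;4,3,2,1]$ phenomenon shows that distinct weights need not yield distinct types. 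In short: your outline isolates the right difficulty, but the key structural lemma remains an open question, which is consistent with the paper treating the whole statement as a conjecture.
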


We leave it as an open problem to determine $\operatorname{Nak}^{\mathcal{T}}(n,t)$, $\operatorname{Nak}^{\mathcal{C}}(n,t)$, 
and $\operatorname{Nak}^{\mathcal{S}}(n,t)$ for $t>4$. The section is concluded by two constructions of parametric classes of simple 
games providing lower bounds for $\operatorname{Nak}^{\mathcal{S}}(n,t)$.

\begin{proposition}
  For $n\ge t$ and $t\ge 6$ we have $\operatorname{Nak}^{\mathcal{S}}(n,t)\ge n-\left\lfloor\frac{t-1}{2}\right\rfloor$.
\end{proposition}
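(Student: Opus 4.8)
The plan is to prove the lower bound by writing down an explicit family of simple games. It is convenient to record a simple game through the antichain $\mathcal{F}$ of the complements of its minimal winning coalitions: conversely every antichain $\mathcal{F}$ of non-empty proper subsets of $N$ arises in this way, from the game whose minimal winning coalitions are the sets $N\setminus F$, $F\in\mathcal{F}$. Such a game has no vetoer iff $\bigcup\mathcal{F}=N$; two players are interchangeable (lie in the same equivalence class) iff their transposition is an automorphism of $\mathcal{F}$; and, by Lemma~\ref{lemma_nakamura_mwc}, its Nakamura number equals the covering number $\tau(\mathcal{F})$, i.e.\ the least number of members of $\mathcal{F}$ whose union is $N$ --- this is precisely the quantity optimized by the ILP of Lemma~\ref{lemma_ilp_vector_sg}.

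I would build $\mathcal{F}$ as a disjoint union of blocks, each on its own set of players, and keep track of the \emph{deficiency} of a block (its number of players minus its covering number), noting that covering numbers and --- when the blocks are pairwise non-isomorphic --- also equivalence-class counts are additive over the blocks. An \emph{isolated block} is a non-empty set of players each contributing its own singleton to $\mathcal{F}$: one class, deficiency $0$; this block carries the freedom in $n$. A \emph{star of size $a\ge2$} is a centre $z$ together with leaves $\ell_1,\dots,\ell_a$ contributing the pairs $\{z,\ell_i\}$: every pair is forced into each cover (its leaf lies in no other member), so the covering number is $a$, the deficiency is $1$, and the classes are $\{z\}$ and $\{\ell_1,\dots,\ell_a\}$. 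A \emph{broom} is a set of players $q_1,q_2,q_3,q_4,\ell_1,\dots,\ell_m$ with $m\ge2$ contributing $\{q_1,q_2\},\{q_2,q_3\},\{q_3,q_4\}$ and all $\{q_4,\ell_i\}$: the pair at $q_1$ and the pairs at the $\ell_i$ are forced and cover everyone but $q_3$, whom one further pair covers, so the covering number is $m+2$, the deficiency is $2$, and a look at the degrees shows $q_1,q_2,q_3,q_4$ pairwise inequivalent and the $\ell_i$ forming a single class, giving $5$ classes. Taking all star sizes pairwise distinct makes all blocks pairwise non-isomorphic. Now for odd $t$ I take one isolated block together with $\tfrac{t-1}{2}$ stars, obtaining $t$ classes and total deficiency $\tfrac{t-1}{2}$; for even $t$ I take one isolated block, one broom, and $\tfrac{t-6}{2}$ stars --- this is exactly where the hypothesis $t\ge6$ is used --- obtaining $1+5+(t-6)=t$ classes and total deficiency $0+2+\tfrac{t-6}{2}=\tfrac{t-2}{2}=\bigl\lfloor\tfrac{t-1}{2}\bigr\rfloor$. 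In either case $\mathcal{F}$ is an antichain with $\bigcup\mathcal{F}=N$, the game has no vetoer and exactly $t$ classes, and $\nu=\tau(\mathcal{F})=n-\bigl\lfloor\tfrac{t-1}{2}\bigr\rfloor$; choosing the smallest admissible block sizes fixes the minimum value of $n$, and enlarging the isolated block realizes every larger $n$.

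The step I expect to be the real obstacle is making the equivalence-class count equal $t$ \emph{exactly} rather than at most $t$: one has to verify that beyond the obvious permutations of the leaves sharing a centre there are no further automorphisms of $\mathcal{F}$, which is what forces the block sizes to be pairwise distinct, and the parity of $t$ is the delicate point --- a disjoint union of stars and an isolated block always has an odd number of classes, so for even $t$ one additional class must be manufactured, and the broom is essentially the cheapest device doing this while spending only two units of deficiency, the amount an even $t$ affords and that is available only once $t\ge6$. The remaining verifications --- that $\mathcal{F}$ is an antichain covering $N$ and the ``forced-member'' computation of $\tau(\mathcal{F})$ --- are routine, and the instances with $n$ below the minimum reached above (so $t$ close to $n$, highly asymmetric) are handled separately, for example by taking $\mathcal{F}$ to be the edge set of a small graph with trivial automorphism group and matching number $\bigl\lfloor\tfrac{n-1}{2}\bigr\rfloor$, so that $t=n$ and, by the Gallai identity for edge covers, $\tau(\mathcal{F})=n-\bigl\lfloor\tfrac{n-1}{2}\bigr\rfloor$.
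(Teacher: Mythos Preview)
Your reformulation via the complement antichain $\mathcal{F}$ and its covering number $\tau(\mathcal{F})$ is correct and is essentially the same lens through which the paper's construction can be read. The difficulty is that the particular blocks you assemble are not dense enough in equivalence classes, and this leaves a genuine gap in the range $t\le n$.

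A star of size $a\ge 2$ spends $a+1\ge 3$ players to buy $2$ classes, and the broom spends $m+4\ge 6$ players for $5$ classes. Even dropping your ``pairwise distinct sizes'' requirement (which is unnecessary: for these blocks, transpositions across blocks are never automorphisms regardless of isomorphism type), the odd-$t$ construction needs $n\ge 1+\tfrac{3(t-1)}{2}$ and the even-$t$ construction needs $n\ge \tfrac{3t-4}{2}$. Your proposed backup---an asymmetric graph on $n$ vertices with matching number $\lfloor(n-1)/2\rfloor$---only manufactures games with $t=n$; pairs such as $(t,n)=(7,8)$ or $(7,9)$ fall between the two constructions and are not covered. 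Worse, the backup already fails at $t=n=6$: a $6$-vertex graph with no isolated vertex and matching number $2$ necessarily has two vertices with identical neighbourhoods (run Tutte--Berge: in every case the deficiency set forces repeated pendant neighbourhoods), so a transposition automorphism is unavoidable and $t=6$ cannot be reached this way.

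The paper avoids all of this with a single block you did not consider: the cycle $C_{t-1}$ on the players of types $2,\dots,t$. For $t\ge 6$ no transposition is an automorphism of $C_{t-1}$, so it contributes $t-1$ classes on $t-1$ players, and its edge-cover number is $\lceil(t-1)/2\rceil$, giving deficiency exactly $\lfloor(t-1)/2\rfloor$. One isolated block of size $n-(t-1)\ge 1$ together with $C_{t-1}$ then yields $t$ classes and $\tau(\mathcal{F})=n-\lfloor(t-1)/2\rfloor$ for \emph{every} $n\ge t$, with no odd/even split and no separate small-$n$ argument. Your framework is sound; swapping the star-and-broom kit for a cycle (or, equivalently, adding paths $P_k$ with $k\ge 4$, which give $k$ classes and deficiency $\lfloor k/2\rfloor$) would close the gap.
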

\begin{proof}
  Consider a simple game with $t$ types of players given by the following list of minimal winning vectors:
  \begin{eqnarray*}
    (n_1-1,n_2,\dots,n_t)\\
    (n_1,n_2-1,n_3-1,n_4,\dots,n_t)\\
    (n_1,n_2,n_3-1,n_4-1,n_5,\dots,n_t)\\
    \vdots\\
    (n_1,n_2,\dots,n_{t-2},n_{t-1}-1,n_t-1)\\
    (n_1,n_2-1,n_3,\dots,n_{t-1},n_t-1),
  \end{eqnarray*}
  i.e., if a player of class $1$ is missing, then all other players have to be present in a winning coalition, no two players
  of the same type can be missing in a winning coalition, and at most two players can be missing in a winning vector, 
  if they come from neighbored classes (where the classes $2,3,\dots,t$ are arranged on a circle).
  
  At first we check that this game has in fact $t$ types. Obviously class 1 is different from the other ones. Let $i<j$ be two 
  indices in $\{2,3,\dots,t\}$ and $x^{\{a,b\}}=(x_1,\dots,x_t)$, where $x_h=n_h-1$ if $h\in\{a,b\}$ and 
  $x_h=n_h$ otherwise. Choose some index $c\in\{2,\dots,t\}\backslash\{i,j\}$ as follows. If $i=2$, then let $c\in\{3,t\}$ and 
  $c\in\{i-1,i+1\}$ otherwise. We can further ensure that $c\notin \{j-1,j+1\}$ and $(c,j)\neq (2,t)$. With this $x^{\{i,c\}}$ is a 
  winning vector and $x^{\{j,c\}}$ is a losing vector. Thus, the classes $i$ and $j$ have to be different.  
  
  
  With respect to the Nakamura number we remark that we have to choose $n_1$ coalitions of the form $(n_1-1,n_2,\dots,n_t)$. 
  All other coalitions exclude $2$ players, so that we need $\left\lceil\frac{n_2+\dots+n_t}{2}\right\rceil$ of these.
  Taking $n_2=\dots=n_t=1$ gives the proposed bound.  
\end{proof}

\begin{proposition}
  Let $k\ge 3$ be an integer. For $2k+1\le t\le k+2^k$ and $n\ge t$ we have $\operatorname{Nak}^{\mathcal{S}}(n,t)\ge n-k$.
\end{proposition}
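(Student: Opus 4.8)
The plan is to pass from the Nakamura number to a purely graph-theoretic covering quantity and then to exhibit a single ``graph game'' realising the bound. Observe first that winning coalitions $S_1,\dots,S_r$ have empty intersection precisely when their complements $N\setminus S_1,\dots,N\setminus S_r$ cover $N$, so $\nu\sgw$ is the least number of complements of winning coalitions that cover $N$. Now fix $k\ge 3$, a value $t$ with $2k+1\le t\le k+2^k$, and $n\ge t$. Split $N$ into disjoint parts $U=\{u_1,\dots,u_k\}$ and $V$ with $|V|=n-k$; assign to each $v\in V$ a set $B_v\subseteq U$; let $G$ be the graph on $N$ with edge set $E=\{\{u_i,v\}:v\in V,\ u_i\in B_v\}$, so that $G$ has no edge inside $U$ and none inside $V$ and hence $U$ is a vertex cover of $G$; and define $\sgw$ by $\mathcal{W}=\{S\subseteq N:|S|\ge n-1\}\cup\{N\setminus e:e\in E\}$. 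This $\mathcal{W}$ is upward closed (a proper superset of some $N\setminus e$ has at least $n-1$ elements), contains $N$, excludes $\emptyset$ (as $n\ge 7$), and admits no vetoer (no $N\setminus\{i\}$ is losing); so $\sgw$ is a vetoer-free simple game on $n$ players, and the complements of its winning coalitions are exactly $\emptyset$, the singletons, and the edges of $G$.

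It follows that $\nu\sgw$ equals the minimum number of singletons and edges of $G$ covering $N$. A standard computation---via Gallai's identity $\alpha'(H)+\mu(H)=|V(H)|$ for a graph $H$ without isolated vertices, applied to the subgraph formed by the chosen edges---shows that this minimum equals $n-\mu(G)$, where $\mu(G)$ is the maximum matching size of $G$. Since $U$ is a vertex cover of size $k$, we have $\mu(G)\le k$ and therefore $\nu\sgw\ge n-k$.

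It remains to choose the sets $B_v$ so that $\sgw$ has exactly $t$ equivalence classes. Since every coalition of size at least $n-1$ is invariant under all permutations of $N$, the automorphisms of $\sgw$ are exactly the graph automorphisms of $G$, and $i\square j$ holds if and only if the transposition $(i\,j)$ is such an automorphism, i.e.\ $N_G(i)\setminus\{j\}=N_G(j)\setminus\{i\}$. Take the $k$ two-element sets $\{u_1,u_2\},\{u_2,u_3\},\dots,\{u_{k-1},u_k\},\{u_k,u_1\}$ (the edges of a $k$-cycle on $U$, pairwise distinct as $k\ge 3$) and complete them to a family of $t-k$ pairwise distinct subsets of $U$, which is possible since $k+1\le t-k\le 2^k$. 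Assign these $t-k$ subsets bijectively to $t-k$ vertices of $V$ and give every remaining vertex of $V$ the set $\{u_1,u_2\}$. Then two vertices of $V$ are twins iff they carry the same subset, so $V$ falls into exactly $t-k$ classes; each $u_i$ lies in two distinct cyclic sets, so $|N_G(u_i)|\ge 2$, whence $N_G(u_i)\setminus\{v\}\neq\emptyset$ while $N_G(v)\subseteq U$, so no $u_i$ is a twin of a vertex of $V$; and for $i\ne j$ one of the cyclic sets contains exactly one of $u_i,u_j$ (take $\{u_i,u_{i+1}\}$, or $\{u_{i-1},u_i\}$ when $j=i+1$, which is legitimate as $k\ge 3$), so $N_G(u_i)\ne N_G(u_j)$ and $U$ falls into $k$ classes. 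Hence $\sgw$ has $n$ players, no vetoers, exactly $k+(t-k)=t$ equivalence classes, and $\nu\sgw\ge n-k$, proving $\operatorname{Nak}^{\mathcal{S}}(n,t)\ge n-k$.

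The crux is the last paragraph: one must keep the number of equivalence classes equal to $t$ while the matching number---and hence the covering number $\nu\sgw$---stays large. The cyclic family of $2$-subsets is engineered precisely so that the vertices of $U$ are pairwise separated by some $B_v$ and each has degree at least two; this simultaneously prevents ``accidental'' twins inside $U$ and between $U$ and $V$, whereas more symmetric choices of the $B_v$ (say, only singletons) would collapse classes together.
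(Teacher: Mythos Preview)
Your proof is correct and close in spirit to the paper's, but the technical choices differ in a few places worth noting. Both arguments build a bipartite ``edge game'': a $k$-element core (your $U$, the paper's $V$) and a complementary part, with the size-$(n-2)$ winning coalitions being exactly the $N\setminus\{u,v\}$ for certain core/non-core pairs. The paper additionally declares $N$ minus the core to be winning and argues directly that every winning coalition misses at most one non-core player, giving $\nu\ge n-k$ in one line; you instead drop that extra coalition, recast $\nu$ as the minimum singleton/edge cover of $N$, and obtain $\nu=n-\mu(G)\ge n-k$ via Gallai's identity and the vertex-cover bound $\mu(G)\le k$. For the equivalence classes, the paper separates the $k$ core players by assigning the $k$ singleton subsets and pools the surplus non-core players by assigning one of them the empty subset; you separate the core players with the $k$ cyclic $2$-subsets (which simultaneously guarantees each core vertex has degree $\ge 2$, preventing cross twins) and pool the surplus by repeating $\{u_1,u_2\}$. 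Your route trades a slightly heavier combinatorial tool (matching/Gallai) for a cleaner game description with no ad hoc extra winning coalition, while the paper's is more elementary but carries one more clause to check.
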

\begin{proof}
  Let $V$ be an arbitrary $k$-element subset of $N$. Let $U_1,\dots,U_{t-k}$ be distinct subsets of $V$ including 
  all $k$ one-element subsets and the empty subset. For each $1\le i\le t-k$ we choose a distinct player $v_i$ in $N\backslash V$. We define 
  the game by specifying the set of winning coalitions as follows: The grand coalition and all coalitions of cardinality $n-1$ are 
  winning. Coalition $N\backslash V$ and all of its supersets are winning. Additionally the following 
  coalitions of cardinality $n-2$ are winning: For all $1\le i\le t-k$ and all $u\in U_i$ the coalition $N\backslash\{v_i,u\}$ 
  is winning.
  
  We can now check that the $k$ players in $V$ are of $k$ different types, where each equivalence class contains exactly 
  one player (this is due to the $1$-element subsets $U_i$ of $V$). Players $v_i$ also form their own equivalence class, consisting of 
  exactly one player -- except for the case of $U_i=\emptyset$, where all remaining players from $N\backslash V$ are pooled. 
  Thus, we have $2k+1\le t\le k+2^k$ types of players.
  
  Suppose we are given a list $S_1,\dots,S_l$ of winning coalitions with empty intersection, then $\left|N\backslash(S_i\backslash V)\right|=1$, 
  i.e., every winning coalition can miss at most one player from $N\backslash V$. Thus, the Nakamura number is at least $n-k$.  
\end{proof}

\section{Relations for the Nakamura number}
\label{sec_further}

As we have already remarked, the lower bound of Theorem~\ref{thm_weighted} can be strengthened if we maximize the quota, i.e., if we 
solve
\begin{eqnarray*}
  \max q\\
  w(S) &\ge& q\quad\forall S\in \mathcal{W}\\
  w(T) &<& q\quad\forall T\in\mathcal{L}\\
  w(N) &=& 1\\
  w_i &\ge& 0\quad \forall 1\le i\le n
\end{eqnarray*}

Since the losing coalitions were not used in the proof of the lower bound in Theorem~\ref{thm_weighted}, we consider the linear 
program
\begin{eqnarray*}
  \max q\\
  w(S) &\ge& q\quad\forall S\in \mathcal{W}\\
  w(N) &=& 1\\
  w_i &\ge& 0\quad \forall 1\le i\le n,
\end{eqnarray*}
which has the same set of optimal solutions, except for the target value, as
\begin{eqnarray*}
  \min 1-q\\
  w(S) &\ge& q\quad\forall S\in \mathcal{W}\\
  w(N) &=& 1\\
  w_i &\ge& 0\quad \forall 1\le i\le n.
\end{eqnarray*}
Note that $\sgw$ does not need to be weighted. Here the optimal value $1-q$ is also called the minimum maximum excess $e^\star$, which 
arises in the determination of the nucleolus. 

Dividing the target function by $q>0$ and replacing $w_i=w_i'q$, which is a monotone transform, we obtain 
that the set of the optimal solutions of the previous LP is the same as the one of:
\begin{eqnarray*}
  \min \frac{1-q}{q}=\frac{1}{q}-1\\
  w'(S) &\ge& 1\quad\forall S\in \mathcal{W}\\
  w'(N) &=& \frac{1}{q}\\
  w'_i &\ge& 0\quad \forall 1\le i\le n,
\end{eqnarray*}
If we now set $\Delta:=\frac{1}{q}-1$ and add $\Delta\ge 0$, we obtain the definition of the price of stability 
for games where the grand coalition is winning, see e.g.\ \cite{Bachrach:2009:CSC:1692490.1692502}. Thus, we have:
\begin{theorem}
  \label{thm_lower_simple_game}
  Let $\sgw$ be a simple game.
  \begin{enumerate}
    \item[(a)] We have $\nu\sgw\ge \left\lceil\frac{1}{e^\star}\right\rceil$ for the minimum maximum excess $e^\star$ of $\sgw$. 
    \item[(b)] We have $\nu\sgw\ge \left\lceil\frac{1+\Delta}{\Delta}\right\rceil=\left\lceil\frac{1}{1-q}\right\rceil$ 
               for the price of stability $\Delta$ of $\sgw$. 
  \end{enumerate}
\end{theorem}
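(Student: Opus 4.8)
The plan is to reuse, essentially verbatim, the telescoping argument from the lower bound of Theorem~\ref{thm_weighted}, applied not to a weighted representation of the game but to an optimal weight vector of the linear program introduced just above. Concretely, let $(w,q)$ be an optimal solution of $\max q$ subject to $w(S)\ge q$ for all $S\in\mathcal{W}$, $w(N)=1$, and $w_i\ge 0$ for all $i$, so that $e^\star=1-q$ and $q\in(0,1]$ (feasibility of the uniform weights $w_i=1/n$, under which every winning coalition has weight $\ge 1/n$, forces the optimal $q$ to be positive). The vector $w$ need not induce a weighted representation of $\sgw$, since the losing coalitions are left unconstrained; the only properties used below are $w(N)=1$ and that every \emph{winning} coalition $S$ satisfies $w(S)\ge q=1-e^\star$.

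I would first dispose of the degenerate case $e^\star=0$: then $w(S)=1=w(N)$, hence $w(N\setminus S)=0$, for every $S\in\mathcal{W}$, so $w$ is supported on $\bigcap_{S\in\mathcal{W}} S$; as $w(N)=1$ this intersection is non-empty, i.e.\ $\sgw$ has a vetoer and $\nu\sgw=\infty$ by Proposition~\ref{prop_observation}(b), in agreement with $\lceil 1/e^\star\rceil=\lceil 1/0\rceil=\infty$. So assume $e^\star>0$, put $r=\nu\sgw$, and pick winning coalitions $S_1,\dots,S_r$ with empty intersection. Setting $I_0=N$ and $I_i=I_{i-1}\cap S_i$, additivity of $w$ gives the identity $w(I_{i-1})+w(S_i)=w(I_{i-1}\cup S_i)+w(I_i)\le w(N)+w(I_i)$, and an induction on $i$ then yields $w(I_i)\ge 1-i\cdot e^\star$ for all $0\le i\le r$. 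Evaluating at $i=r$, where $I_r=\emptyset$ and hence $w(I_r)=0$, we get $0\ge 1-r\cdot e^\star$, i.e.\ $r\ge 1/e^\star$, and since $r$ is an integer, $r\ge\lceil 1/e^\star\rceil$. This proves (a).

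Part (b) is then just bookkeeping on the chain of equivalent programs recorded above: the price of stability $\Delta=\tfrac{1}{q}-1=\tfrac{1-q}{q}$ is formed from the same optimal $q$, so $\tfrac{1+\Delta}{\Delta}=\tfrac{1}{1-q}=\tfrac{1}{e^\star}$, whence $\lceil\tfrac{1+\Delta}{\Delta}\rceil=\lceil\tfrac{1}{1-q}\rceil=\lceil\tfrac{1}{e^\star}\rceil$, and (a) applies. I do not anticipate a genuine obstacle here; the only point needing care is the observation that an LP-optimal $w$ satisfies $w(S)\ge q$ for \emph{all} winning coalitions, not merely the minimal ones, which is exactly what the inductive step uses, together with keeping straight that $e^\star$, $\Delta$, and the relative quota $q$ appearing in the two displayed formulas all refer to the same optimum.
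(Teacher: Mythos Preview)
Your proposal is correct and follows essentially the same approach as the paper: the paper's argument consists precisely of the observation that the losing-coalition constraints play no role in the lower-bound proof of Theorem~\ref{thm_weighted}, so the telescoping argument applies to the LP-optimal pseudo-weights, together with the chain of LP reformulations linking $e^\star$, $q$, and $\Delta$. Your write-up is in fact more explicit than the paper's, which leaves the induction implicit and does not separately treat the degenerate case $e^\star=0$.
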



Note that in part~(b) we formally obtain the same lower bound as in Theorem~\ref{thm_weighted}, while there is of course no notion of a
quota $q$ in a simple game. We remark that we have $e^\star=0$ or $\Delta=\frac{e^\star}{1-e^\star}=0$ if and only if $\sgw$ contains 
a vetoer. In general, the Nakamura number is large if the price of stability is low. It seems that Theorem~\ref{thm_lower_simple_game} 
is the tightest and most applicable lower bound that we have at hand for the Nakamura number of a simple game. An interesting question 
is to study under what conditions it attains the exact value.

\subsection{The Nakamura number and the one-dimensional cutting stock problem}
\label{subsec_cutting_stock}

Finally we would like to mention another relation between the Nakamura number of a weighted game and a famous optimization problem -- 
the one-dimensional cutting stock problem. Here, one-dimensional objects like e.g.\ paper reels or wooden rods, all having length 
$L\in\mathbb{R}_{>0}$ should be cut into pieces of lengths $l_1,\dots,l_m$ in order to satisfy the corresponding order demands 
$b_1,\dots,b_m\in\mathbb{Z}_{>0}$. The minimization of waste is the famous 1CSP. By possible duplicating some lengths $l_i$, we can 
assume $b_i=1$ for all $1\le i\le m$, while this transformation can increase the value of $m$. Using 
the abbreviations $l=(l_1,\dots,l_m)^T$ we denote an instance of 1CSP by $E=(m,L,l)$. The classical 
ILP formulation for the cutting stock problem by Gilmore and Gomory is based on so-called cutting patterns, see \cite{gilmore1961linear}. 
We call a pattern $a\in\{0,1\}^m$ feasible (for $E$) if $l^{\top}a\le L$. By $P(E)$ we denote the set of all patterns that are feasible 
for $E$. Given a set of patterns $P=\{a^1,\dots,a^r\}$ (of $E$), let $A(P)$ denote the concatenation of the pattern vectors $a^i$. With this 
we can define
$$
  z_B(P,m) := \sum_{i=1}^{r} x_i \to \min \; \text{ subject to } \; A(P)x = \mathbf{1}, \; x \in \{0,1\}^{r}\quad \text{and}
$$
$$
  z_C(P,m) := \sum_{i=1}^{r} x_i \to \min \; \text{ subject to } \; A(P)x = \mathbf{1}, \; x \in [0,1]^{r}.
$$
Choosing $P=P(E)$ we obtain the mentioned ILP formulation for 1CSP of \cite{gilmore1961linear} 
and its continuous relaxation.    
Obviously we have $z_B(P(E),m)\ge \left\lceil z_C(P(E),m)\right\rceil$. In cases of equality one speaks of an IRUP (integer 
round-up property) instance -- a concept introduced for general linear minimization problems in \cite{baum1981integer}. In practice 
almost all instances have the IRUP. Indeed, the authors of \cite{scheithauer1995modified} have conjectured that 
$z_B(P(E),m)\le \left\lceil z_C(P(E),m)\right\rceil+1$ -- called the MIRUP property (modified integer round-up property), which is one 
of the most important theoretical issues about 1CSP, see also \cite{eisenbrand2013bin}.

There is a strong relation between the 1CSP instances and weighted games, see \cite{kartak2015minimal}. For each weighted games 
there exists an 1CSP instance where the feasible patterns correspond to the losing coalitions. For the other direction the feasible 
patterns of a 1CSP instance correspond to the losing coalitions of a weighted game if the all-one vector is non-feasible.  
In our context, we can utilize upper bounds for $z_B$ in at least two ways.  

\begin{table}[!tp]
  \begin{center}
    \begin{tabular}{rrrrrrrrrr}
      \hline
      n & $\infty$ & 2 & 3 & 4 & 5 & 6 & 7 & 8 & 9\\
      \hline
       1 &        1 \\
       2 &        2 &       1 \\
       3 &        4 &       2 &       1 \\
       4 &        8 &       5 &       1 &       1 \\
       5 &       16 &       9 &       4 &       1 &      1 \\
       6 &       32 &      19 &       8 &       2 &      1 &      1 \\
       7 &       64 &      34 &      18 &       7 &      2 &      1 &      1 \\
       8 &      128 &      69 &      36 &      14 &      4 &      2 &      1 &     1 \\
       9 &      256 &     125 &      86 &      24 &     12 &      4 &      2 &     1 &     1 \\
      10 &      512 &     251 &     160 &      60 &     24 &      8 &      4 &     2 &     1 \\
      11 &     1024 &     461 &     362 &     120 &     43 &     21 &      8 &     4 &     2 \\
      12 &     2048 &     923 &     724 &     240 &     86 &     42 &     16 &     8 &     4 \\
      13 &     4096 &    1715 &    1525 &     513 &    194 &     78 &     38 &    16 &     8 \\
      14 &     8192 &    3431 &    3050 &    1026 &    388 &    156 &     76 &    32 &    16 \\
      15 &    16384 &    6434 &    6529 &    2052 &    776 &    312 &    145 &    71 &    32 \\
      16 &    32768 &   12869 &   12785 &    4377 &   1517 &    659 &    290 &   142 &    64 \\
      17 &    65536 &   24309 &   27000 &    8614 &   3174 &   1318 &    580 &   276 &   136 \\
      18 &   131072 &   48619 &   54000 &   17228 &   6348 &   2636 &   1160 &   552 &   272 \\ 
      19 &   262144 &   92377 &  111434 &   35884 &  12696 &   5221 &   2371 &  1104 &   535 \\
      20 &   524288 &  184755 &  222868 &   71768 &  25392 &  10442 &   4742 &  2208 &  1070 \\
      21 &  1048576 &  352715 &  462532 &  142567 &  51468 &  21169 &   9484 &  4416 &  2140 \\
      22 &  2097152 &  705431 &  917312 &  292886 & 102936 &  42338 &  18898 &  8902 &  4280 \\
      23 &  4194304 & 1352077 & 1893410 &  585772 & 205872 &  84676 &  37796 & 17804 &  8560 \\
      24 &  8388608 & 2704155 & 3786820 & 1171544 & 411744 & 169352 &  75592 & 35608 & 17120 \\
      25 & 16777216 & 5200299 & 7738389 & 2379267 & 830572 & 338198 & 151690 & 71124 & 34332 \\
      \hline
    \end{tabular}
    \caption{Complete simple games with minimum ($r=1$) per Nakamura number -- part 1}
    \label{table_1}  
  \end{center}
\end{table}

\begin{lemma}
Let $\sgw$ be a strong simple game on $n$ players, then $\nu\sgw\le z_B(\overline{\mathcal{L}},n)$, where 
$\overline{\mathcal{L}}$ denotes the incidence vectors corresponding to the losing coalitions 
$\mathcal{L}=2^N\backslash\mathcal{W}\subseteq 2^N$.
\end{lemma}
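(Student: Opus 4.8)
The plan is to unwind the definition of $z_B(\overline{\mathcal{L}},n)$ and to recognise that a feasible $0/1$ solution of the associated set-partitioning problem is exactly a partition of the player set into losing coalitions; the complements of its parts then form a short list of winning coalitions with empty intersection.

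First I would make explicit what $z_B(\overline{\mathcal{L}},n)$ computes. The patterns are the incidence vectors of the losing coalitions, so a vector $x\in\{0,1\}^r$ with $A(\overline{\mathcal{L}})x=\mathbf{1}$ selects a subfamily $T_1,\dots,T_k$ of losing coalitions such that every player lies in exactly one $T_j$; that is, $\{T_1,\dots,T_k\}$ is a partition of $N$ into losing coalitions, and $z_B(\overline{\mathcal{L}},n)$ is the minimum possible number of parts over all such partitions (and $+\infty$ if none exists, in which case the claimed inequality is vacuous).

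Next I would take an optimal partition $N=T_1\cup\dots\cup T_k$ into pairwise disjoint losing coalitions with $k=z_B(\overline{\mathcal{L}},n)$, and set $S_j:=N\setminus T_j$ for $1\le j\le k$. Since $\sgw$ is strong, the complement of every losing coalition is winning, so $S_j\in\mathcal{W}$ for all $j$. Because the $T_j$ cover $N$,
$$
  \bigcap_{j=1}^{k} S_j \;=\; N\setminus\bigcup_{j=1}^{k} T_j \;=\; N\setminus N \;=\; \emptyset ,
$$
so $S_1,\dots,S_k$ are winning coalitions with empty intersection, and the definition of the Nakamura number gives $\nu\sgw\le k=z_B(\overline{\mathcal{L}},n)$.

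There is no real obstacle in the argument; the only points deserving a word are the degenerate cases. If $N$ admits no partition into losing coalitions --- by strongness this happens precisely when the game has a vetoer, equivalently a dictator --- then $z_B(\overline{\mathcal{L}},n)=\infty$ and the bound is trivial, in accordance with $\nu\sgw=\infty$ from Proposition~\ref{prop_observation}(b). One may also note that an optimal partition never uses the empty coalition (the pattern $\mathbf{0}$), so the $T_j$ are genuinely nonempty and the $S_j$ pairwise distinct, though this is not needed for the inequality.
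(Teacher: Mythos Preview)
Your proof is correct and follows essentially the same route as the paper: interpret $z_B(\overline{\mathcal{L}},n)$ as the minimum size of a partition of $N$ into losing coalitions, take complements, use strongness to see they are winning, and observe that their intersection is empty. Your version is somewhat more explicit (the De~Morgan computation of the intersection and the discussion of the degenerate vetoer case), but the argument is the same.
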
 
\begin{proof}
  The value $z_B(\overline{\mathcal{L}},n)$ corresponds to the minimal number of losing coalitions that partition the set $N$, which 
  is the same as the minimum number of (maximal) losing coalitions that cover the grand coalition $N$. Let $L_1,\dots,L_r$ denote 
  a list of losing coalitions, where $r=z_B(\overline{\mathcal{L}},n)$. Since $\sgw$ is strong the coalitions $N\backslash L_1,\dots,N\backslash L_r$ 
  are winning and have an empty intersection, so that $\nu\sgw\le z_B(\overline{\mathcal{L}},n)$.
\end{proof} 

As the assumption of a strong simple game (without vetoers) implies $\nu\sgw\in\{2,3\}$, the applicability is quite limited. This is not  
the case for the second, more direct, connection. 
\begin{proposition}
  \label{prop_connection_1CSP}
  For a simple game $\sgw$ on $n$ players we have $\nu\sgw=z_B(\mathbf{1}-\overline{\mathcal{W}},n)$, 
  where $\overline{\mathcal{W}}$ denotes the incidence vectors corresponding to the winning coalitions 
  and $\mathbf{1}$ is the vector with $n$ ones.
\end{proposition}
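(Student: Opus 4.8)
The plan is to reformulate the quantity $z_B(\mathbf{1}-\overline{\mathcal{W}},n)$ purely combinatorially and then prove the two inequalities separately. First I would unwind the definition of $z_B$: a feasible $0/1$ vector $x$ for the instance with pattern set $P=\mathbf{1}-\overline{\mathcal{W}}$ selects some patterns, each of which is the incidence vector of a complement $N\setminus S$ with $S\in\mathcal{W}$, and the constraint $A(P)x=\mathbf{1}$ forces every player to lie in exactly one selected complement. Hence $z_B(\mathbf{1}-\overline{\mathcal{W}},n)$ is exactly the minimum number $r$ of winning coalitions $S_1,\dots,S_r$ for which the complements $N\setminus S_1,\dots,N\setminus S_r$ partition $N$. (The pattern coming from $S=N$ is the zero vector and is never used in an optimal solution, so it can be ignored.)

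For $\nu\sgw\le z_B(\mathbf{1}-\overline{\mathcal{W}},n)$ I would take an optimal exact cover $N\setminus S_1,\dots,N\setminus S_r$. Since $\bigcup_{i=1}^r (N\setminus S_i)=N$, we have $\bigcap_{i=1}^r S_i=\emptyset$, and each $S_i$ is winning, so $r$ winning coalitions with empty intersection exist and $\nu\sgw\le r=z_B(\mathbf{1}-\overline{\mathcal{W}},n)$.

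For the reverse inequality I would start from $r=\nu\sgw$ winning coalitions $S_1,\dots,S_r$ with $\bigcap_{i=1}^r S_i=\emptyset$; then the complements $T_i:=N\setminus S_i$ merely \emph{cover} $N$. To turn this cover into a partition without increasing the count, disjointify greedily: set $T_1':=T_1$ and $T_i':=T_i\setminus(T_1\cup\dots\cup T_{i-1})$ for $i\ge 2$. The nonempty $T_i'$ partition $N$, and $T_i'\subseteq T_i$ gives $S_i':=N\setminus T_i'\supseteq S_i$, which is again winning by monotonicity of $v$. Discarding the empty $T_i'$ (these correspond to $S_i'=N$) leaves an exact cover of $N$ by at most $r$ complements of winning coalitions, whence $z_B(\mathbf{1}-\overline{\mathcal{W}},n)\le r=\nu\sgw$.

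The only genuinely delicate point — and the main thing to get right — is that $z_B$ as defined demands a partition of $N$, while the Nakamura number only asks for an empty intersection, i.e.\ for a mere cover by complements; the greedy disjointification together with monotonicity of $v$ is precisely what bridges this gap. Finally I would dispatch the degenerate case $\nu\sgw=\infty$: by Proposition~\ref{prop_observation}(b) this occurs iff $\sgw$ has a vetoer, iff no finite family of winning coalitions has empty intersection, iff no partition of $N$ into complements of winning coalitions exists, iff the ILP defining $z_B(\mathbf{1}-\overline{\mathcal{W}},n)$ is infeasible; so both sides equal $\infty$ and the identity still holds.
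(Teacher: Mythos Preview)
Your argument is correct and follows essentially the same route as the paper's proof: one direction is immediate (an exact cover by complements of winning coalitions gives winning coalitions with empty intersection), and for the other direction you make the paper's phrase ``enlarge the coalitions so that every player is missing in exactly one of them'' concrete via the greedy disjointification $T_i':=T_i\setminus(T_1\cup\dots\cup T_{i-1})$, then invoke monotonicity. Your explicit treatment of the vetoer case $\nu\sgw=\infty$ is a welcome addition that the paper leaves implicit.
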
 
\begin{proof}
  Let $r=z_B(\mathbf{1}-\overline{\mathcal{W}},n)$ and $x_1,\dots, x_r$ corresponding incidence vectors. Then the sets $S_i$  
  corresponding to the incidence vectors $\mathbf{1}-x_i$ are winning and have empty intersection. If otherwise, $S_1,\dots, S_r$ 
  are winning coalitions with empty intersection, then we can enlarge the coalitions to $T_1,\dots, T_r$ such that 
  the intersection remains empty but every player is missing in exactly one of the $T_i$. Since the $T_i$ are winning coalitions 
  by construction, $\mathbf{1}$ minus the incidence vector of $T_i$ gives $r$ vectors that are feasible for 
  $z_B(\mathbf{1}-\overline{\mathcal{W}},n)$. 
\end{proof} 
 
\begin{example}
For an integer $k\ge 2$ consider the weighted game $v=[16k-20;9^k,7^k]$. We can easily check that all coalitions of size $2k-2$ are 
winning while all coalitions of size $2k-3$ are losing, so that $v=[2k-2;1^{2k}]$ and $\nu(v)=k$. The lower bound of Theorem~\ref{thm_weighted} 
only gives $\nu(v)\ge \left\lceil\frac{4k}{5}\right\rceil$. The feasible incidence vectors in $z_B(\mathbf{1}-\overline{\mathcal{W}},n)$ are those 
that contain at most two $1$s, so that even $z_C(\mathbf{1}-\overline{\mathcal{W}},n)$ gives a tight upper bound.
\end{example}
Of course the advantage of the incidence vectors is that no explicit weights are involved, while the lower bound of Theorem~\ref{thm_weighted} 
depends on the weighted representation. We remark that $z_B(\mathbf{1}-\overline{\mathcal{W}},n)\ge \left\lceil\frac{1}{1-q'}\right\rceil$ for any normalized representation 
$(q',w')$ of $\sgw$. 
 
We can also use 1CSP instances without the IRUP property to construct weighted games where the lower bound of Theorem~\ref{thm_weighted} 
is never tight for any weighted representation. Let $L=155$ be the length of the material to be cut and $l=(9,12,12,16,16,46,46,54,69,77,
102)$ be the lengths of the requested final pieces. Taking $\sum_{i=1}^{11} l_i -L=304$ as quota gives the weighted game 
$v=[304;9,12,12,16,16,46,46,54,69,77,102]$. Theorem~\ref{thm_weighted} gives $\nu(v)\ge 3$ while $\nu(v)=4$. 

\begin{conjecture}
  For any weighted game $\sgw$ on $n$ players we have $\nu\sgw\le \left\lfloor z_C(\mathbf{1}-\overline{\mathcal{W}},n)\right\rfloor+1$.
\end{conjecture}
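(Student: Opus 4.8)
The plan is to recast the statement as an integer round-up question for a genuine one-dimensional cutting stock instance, and then to separate off what is classical. Fix a normalized representation $(q',w')$ of $\sgw$. A set $T\subseteq N$ is the complement $N\setminus S$ of a winning coalition $S$ exactly when $w'(T)\le 1-q'$, so the pattern family $\mathbf{1}-\overline{\mathcal{W}}$ is precisely the feasible-pattern set $P(E)$ of the 1CSP instance $E=(n,\,1-q',\,(w'_1,\dots,w'_n))$ with unit demands. By Proposition~\ref{prop_connection_1CSP} this gives $\nu\sgw=z_B(P(E),n)$, while $z_C(\mathbf{1}-\overline{\mathcal{W}},n)=z_C(P(E),n)$, so the conjecture reads $z_B(P(E),n)\le\lfloor z_C(P(E),n)\rfloor+1$. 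When $z_C$ is not an integer this forces $z_B=\lceil z_C\rceil$, i.e.\ the plain IRUP for $E$; when $z_C$ is an integer it is the MIRUP gap bound. So the statement is in fact slightly stronger than asking that $E$ have the MIRUP property.

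Next I would record what both sides already satisfy. We may assume $\sgw$ has no vetoer and no null player (Proposition~\ref{prop_observation}), and the case $\nu\sgw=2$ is immediate since then $z_C\le z_B=2$ and $\lfloor z_C\rfloor+1\ge 2$. In general Theorem~\ref{thm_lower_simple_game} gives $\nu\sgw\ge\lceil 1/(1-q^\star)\rceil=\lceil 1/e^\star\rceil$ with $q^\star$ the maximal relative quota, and the same number bounds $z_C$ from below: dropping the constraints $x_T\le 1$ only lowers the optimum, and the dual of the resulting cover relaxation is satisfied by $y_i:=w'_i/(1-q')$, because $\sum_{i\in T}y_i=w'(T)/(1-q')\le 1$ for every feasible pattern $T$ and $\sum_i y_i=1/(1-q')$. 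Hence $\nu\sgw,\ z_C\ge 1/(1-q^\star)$, and what the conjecture really needs is a construction that turns an optimal fractional exact cover by complements of winning coalitions into an integral one at the cost of at most one extra coalition. The one-representation greedy in the proof of Theorem~\ref{thm_weighted} is too crude for this, as Example~\ref{ex_parameteric} shows, so such a construction has to be driven by the LP optimum rather than by a fixed weighting.

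For several subclasses the conjecture should be reachable. If $\sgw$ has $t$ equivalence classes then $E$ has at most $t$ distinct item lengths, and for small $t$ one can hope to verify the inequality directly, e.g.\ by combining Lemma~\ref{lemma_ilp_vector_csg}, Proposition~\ref{prop_exact_csg_r_1} and Corollary~\ref{cor_all_but_one_everywhere} to control both $\nu\sgw$ and $z_C$. Two further easy regimes are pairwise divisible weights, where $E$ is a divisible 1CSP for which the IRUP is classical, and any game for which the lower and upper bounds of Theorem~\ref{thm_weighted} (or of Proposition~\ref{prop_prop_bound_alpha_roughly}) already differ by at most one, where there is nothing to prove.

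The hard part will be the general case, and here I do not expect an unconditional proof. As noted just before the conjecture, \emph{every} 1CSP (equivalently, bin packing) instance whose items do not all fit into one stock piece arises as the complement family of a weighted game exactly as above -- take quota $\sum_i l_i-L$ and weights $l_1,\dots,l_m$ -- so the conjecture implies, and whenever $z_C\notin\mathbb{Z}$ is strictly stronger than, the MIRUP conjecture for bin packing, which is open. A realistic target is therefore to prove the conjecture for the subclasses above, to derive the weaker bound $\nu\sgw\le\lceil z_C\rceil+1$ conditionally on bin packing MIRUP, and to settle the genuinely new content, namely whether a weighted game can have $z_C\notin\mathbb{Z}$ while $\nu\sgw>\lceil z_C\rceil$.
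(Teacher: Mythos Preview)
The statement is labelled a \emph{Conjecture} in the paper and is given no proof there; it is simply stated after the 1CSP correspondence of Proposition~\ref{prop_connection_1CSP} and the non-IRUP example. So there is no paper argument to compare your proposal against.

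Your analysis is sound and in fact sharper than what the paper offers. The recasting via Proposition~\ref{prop_connection_1CSP} as $z_B(P(E),n)\le\lfloor z_C(P(E),n)\rfloor+1$ for the 1CSP instance $E=(n,1-q',w')$ is correct, and so is the reverse direction you invoke: any unit-demand 1CSP instance with $\sum_i l_i>L$ yields a weighted game with quota $\sum_i l_i-L$, so the conjecture is equivalent to the stated inequality for \emph{all} such instances. Your case split on whether $z_C$ is integral is the crux: for integral $z_C$ the inequality is exactly MIRUP, while for non-integral $z_C$ it forces IRUP, so the conjecture is at least as hard as MIRUP and strictly stronger on the non-integral side. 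Your conclusion that an unconditional proof is out of reach, and that the residual content is whether a non-IRUP instance with $z_C\notin\mathbb{Z}$ can exist, is exactly the right reading; a single such instance would disprove the conjecture outright. The partial cases you list (few equivalence classes via Lemma~\ref{lemma_ilp_vector_csg} and Proposition~\ref{prop_exact_csg_r_1}, divisible weights, games where the two bounds of Theorem~\ref{thm_weighted} already meet) are reasonable targets, but none of this is in the paper, so you are not reproducing an existing argument---you are correctly explaining why the authors left it as a conjecture.
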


\section{Enumeration results}
\label{sec_enumeration}

In order to get a first idea of the distribution of the attained Nakamura numbers 
we consider the class of complete simple games with a unique shift-minimal winning vector, i.e., $r=1$, see Table~\ref{table_1} and 
Table~\ref{table_1_2}, as well as their subclass of weighted games, see Table~\ref{table_2}.

We have chosen these subclasses since they allow to exhaustively generate all corresponding games for moderate 
sizes of the number of players $n$, which is not the case for many other subclasses of simple games. Additionally, 
the corresponding Nakamura numbers can be evaluated easily applying Proposition~\ref{prop_exact_csg_r_1}. 

\begin{table}[htp]
  \begin{center}
    \begin{tabular}{rrrrrrrrr}
      \hline
      n & 10 & 11 & 12 & 13 & 14 & 15 & 16 & 17 \\
      \hline
      10 &     1 \\
      11 &     1 &    1 \\
      12 &     2 &    1 &    1 \\
      13 &     4 &    2 &    1 &    1 \\
      14 &     8 &    4 &    2 &    1 &    1 \\
      15 &    16 &    8 &    4 &    2 &    1 &   1 \\
      16 &    32 &   16 &    8 &    4 &    2 &   1 &   1 \\
      17 &    64 &   32 &   16 &    8 &    4 &   2 &   1 &   1 \\
      18 &   128 &   64 &   32 &   16 &    8 &   4 &   2 &   1 \\ 
      19 &   265 &  128 &   64 &   32 &   16 &   8 &   4 &   2 \\ 
      20 &   530 &  256 &  128 &   64 &   32 &  16 &   8 &   4 \\ 
      21 &  1050 &  522 &  256 &  128 &   64 &  32 &  16 &   8 \\ 
      22 &  2100 & 1044 &  512 &  256 &  128 &  64 &  32 &  16 \\ 
      23 &  4200 & 2077 & 1035 &  512 &  256 & 128 &  64 &  32 \\ 
      24 &  8400 & 4154 & 2070 & 1024 &  512 & 256 & 128 &  64 \\ 
      25 & 16800 & 8308 & 4128 & 2060 & 1024 & 512 & 256 & 128 \\ 
      \hline
    \end{tabular}
    \caption{Complete simple games with minimum ($r=1$) per Nakamura number  -- part 2}
    \label{table_1_2}  
  \end{center}
\end{table}

\begin{table}[htp]
  \begin{center}
    \begin{tabular}{cccccccccccccccccc}
      \hline
      n & $\infty$ & 2 & 3 & 4 & 5 & 6 & 7 & 8 & 9 & 10 & 11 & 12 & 13 & 14 \\ && & 15 & 16 & 17 \\
      \hline
      1 & 1 \\
      2 & 2 & 1 \\
      3 & 4 & 2 & 1 \\
      4 & 8 & 5 & 1 & 1 \\
      5 & 16 & 8 & 4 & 1 & 1 \\
      6 & 31 & 14 & 7 & 2 & 1 & 1 \\
      7 & 57 & 20 & 11 & 6 & 2 & 1 & 1 \\
      8 & 99 & 30 & 16 & 10 & 3 & 2 & 1 & 1 \\
      9 & 163 & 40 & 26 & 11 & 8 & 3 & 2 & 1 & 1 \\
      10 & 256 & 55 & 32 & 18 & 13 & 4 & 3 & 2 & 1 & 1 \\
      11 & 386 & 70 & 45 & 25 & 14 & 10 & 4 & 3 & 2 & 1 & 1 \\ 
      12 & 562 & 91 & 59 & 33 & 16 & 16 & 5 & 4 & 3 & 2 & 1 & 1 \\
      13 & 794 & 112 & 74 & 42 & 25 & 17 & 12 & 5 & 4 & 3 & 2 & 1 & 1 \\
      14 & 1093 & 140 & 91 & 52 & 34 & 19 & 19 & 6 & 5 & 4 & 3 & 2 & 1 & 1 \\
      15 & 1471 & 168 & 117 & 63 & 44 & 21 & 20 & 14 & 6 & 5 & 4 & 3 & 2 & 1 \\ 
      16 & 1941 & 204 & 136 & 84 & 46 & 32 & 22 & 22 & 7 & 6 & 5 & 4 & 3 & 2 \\ 
      17 & 2517 & 240 & 166 & 96 & 59 & 43 & 24 & 23 & 16 & 7 & 6 & 5 & 4 & 3 \\ 
      18 & 3214 & 285 & 198 & 110 & 72 & 55 & 26 & 25 & 25 & 8 & 7 & 6 & 5 & 4 \\ 
      19 & 4048 & 330 & 231 & 136 & 86 & 57 & 39 & 27 & 26 & 18 & 8 & 7 & 6 & 5 \\ 
      20 & 5036 & 385 & 267 & 163 & 101 & 60 & 52 & 29 & 28 & 28 & 9 & 8 & 7 & 6 \\ 
      21 & 6196 & 440 & 316 & 179 & 117 & 76 & 66 & 31 & 30 & 29 & 20 & 9 & 8 & 7 \\ 
      22 & 7547 & 506 & 355 & 210 & 134 & 92 & 68 & 46 & 32 & 31 & 31 & 10 & 9 & 8 \\ 
      23 & 9109 & 572 & 409 & 242 & 152 & 109 & 71 & 61 & 34 & 33 & 32 & 22 & 10 & 9 \\ 
      24 & 10903 & 650 & 466 & 276 & 171 & 127 & 74 & 77 & 36 & 35 & 34 & 34 & 11 & 10 \\ 
      25 & 12951 & 728 & 524 & 311 & 207 & 130 & 93 & 79 & 53 & 37 & 36 & 35 & 24 & 11 \\ 
      \hline
    \end{tabular}
    \caption{Weighted games with minimum ($r=1$) per Nakamura number}
    \label{table_2}  
  \end{center}
\end{table}

One might say that being non-weighted increases the probability for a complete simple game with a unique shift-minimal winning vector 
to have a low Nakamura number. In Table~\ref{table_2} the last entries of each row seem to coincide with the sequence of natural numbers, 
where the number of entries increases every two rows.

\section{Conclusion}
\label{sec_conclusion}

The Nakamura number measures the degree of rationality of preference aggregation rules such as simple games in the voting context. It 
indicates the extent to which the aggregation rule can yield well defined choices. If the number of alternatives to choose from is less 
than this number, then the rule in question will identify {\lq\lq}best{\rq\rq} alternatives. The larger the Nakamura number of a rule, the 
greater the number of alternatives the rule can rationally deal with. This paper provides new results on: the computation of the 
Nakamura number, lower and upper bounds for it or the maximum achievable Nakamura number for subclasses of simple games and parameters as 
the number of players and the number of equivalent types of them. We highlight the results found in the classes of weighted, complete, and 
$\alpha$-roughly weighted simple games. In addition, some enumerations for some classes of games with a given Nakamura number are obtained. 

Further relations of the Nakamura number to other concepts of cooperative game theory like the price of stability of a simple game or the 
one-dimensional cutting stock problem are provided.

As future research, it would be interesting to study the truth of Conjecture~\ref{conj_1} or finding new results on the Nakamura number for other 
interesting subclasses of simple games, as for example, weakly complete simple games. However, the main open question is to determine further 
classes where the lower bound of Theorem~\ref{thm_weighted} is tight and to come up with tighter upper bounds.


\end{document}